\newtheorem{montheo}{Th\'eoreme}[section]
\newtheorem{moncor}[montheo]{Corollaire} 
\newtheorem{monlem}[montheo]{Lemme}
\newtheorem{mapropo}[montheo]{Proposition} 
\newtheorem{madef}[montheo]{D\'efinition} 
\newtheorem{maremarque}[montheo]{Remarque}
\author{ A. Benabdallah$^{1}$, M. Ben-Artzi$^2$\& Y. Dermenjian$^{3}$}
\begin{document}
\title{\textsf{\textbf{\Large 
    Concentration et confinement des fonctions propres dans un ouvert born\'e (version 2)
    %\footnote{fichier : concentration-confinement-reformatage-v2tertio-ter.tex}
}}}
\author{Assia Benabdallah$^{\dag}$, Matania Ben-Artzi$^{\ddag}$\,\& Yves Dermenjian$^{\dag}$,\\ 
\dag \,Aix Marseille Univ, CNRS, Centrale Marseille, I2M, Marseille, France\\
\ddag\, Hebrew University, Jerusalem, Isra\"{e}l\\
} 

\date{\today}
\maketitle
%%%%%%%%%%%%%%%%%%%%%%%%%%%%%%%%%%%%%%%%%%%%%%%%%%%%
%%%%%%%%%%%%%%%%%%%%%%%%%%%%%%%%%%%%%%%%%%%%%%%%%%%%
\begin{abstract}
Si $\omega$ et $\Omega, \omega\subset\Omega:=(0,L)\times(0,H),$ sont deux ouverts born\'es de $\mathbb{R}^{2}$, il est bien connu qu'il existe une constante $C_{\omega}$ telle que $0<C_{\omega}\leq R_{\omega}(u):=\frac{\Vert u\Vert^{2}_{L^{2}(\omega)}}{\Vert u\Vert^{2}_{L^{2}(\Omega)}}<\frac{\mbox{Vol}(\omega)}{\mbox{Vol}(\Omega)}$ pour toute fonction propre $u$ du Laplacien-Dirichlet $-\Delta$ sur $\Omega.$
 Ce r\'esultat n'\'etant plus exact pour un op\'erateur autoadjoint elliptique $A$ du deuxi\`{e}me ordre sans coefficients constants, plusieurs travaux l'ont consid\'er\'e dont r\'ecemment \cite{AFM:1}. On cr\'ee une partition $\mathfrak{F}_{NG} \cup\mathfrak{F}_{G}$ de l'ensemble des fonctions propres de l'op\'erateur $A$ : 
 \begin{itemize}
 \item les fonctions propres non guid\'ees i.e. $\forall \omega\not=\emptyset, \exists C_{\omega}>0$ tel que $R_{\omega}(u)>C_{\omega},$ si $u\in\mathfrak{F}_{NG},$
 \item les fonctions propres guid\'ees i.e. $\exists \omega\not=\emptyset,$ tel que $\inf\limits_{u\in\mathfrak{F}_G} R_{\omega}(u) = 0.$
 \end{itemize}
Entre autres choses, le papier caract\'erise spectralement ces deux ensembles pour des mod\`{e}les tr\`{e}s simples de milieux stratifi\'es, ce qui donne une condition suffisante, et parfois n\'ecessaire. La stratification permet le passage de la repr\'esentation habituelle du spectre de l'op\'erateur $A$, i.e. $\sigma(A)\subset\mathbb{R},$ \`{a} une repr\'esentation \`{a} deux indices d'o\`{u} une description g\'eom\'etrique des deux familles de fonctions propres. Dans la pr\'ec\'edente version, la section 4.1 \'etait incorrecte, elle est donc corrig\'ee, d'autres preuves sont simplifi\'ees et un r\'esultat plus g\'en\'eral ajout\'e.
 % ainsi que  d'autres r\'esultats reli\'es : interpr\'etation microlocale, mesure de d\'efaut de sous-suites, ...
 
 \centerline{\bf Abstract}
 Let $-\Delta$ be the Laplacian in $\Omega:=(0,L)\times(0,H),$ subject to Dirichlet boundary conditions and let $u$ be an eigenfunction of $-\Delta.$ For any open set $\omega\subset\Omega$ define $R_{\omega}(u) = \frac{\Vert u\Vert^{2}_{L^{2}(\omega)}}{\Vert u\Vert^{2}_{L^{2}(\Omega)}}.$ It is well known that  there exists a constant $C_{\omega}>0$ such that  $C_{\omega}\leq R_{\omega}(u) $ for all  eigenfunctions. This is no longer true for certain more general second-order elliptic  operators and many authors have considered this subject whose \cite{AFM:1} recently.  This work is concerned with such operators, occuring in ``layered media''. In this more general case the set of eigenfunctions is the disjoint union of two non-empty sets $\mathfrak{F}_{NG}\cup\mathfrak{F}_{G}$ as follows.
\begin{itemize} 
\item non-guided eigenfunctions: $\forall \omega\not=\emptyset$, any $u\in \mathfrak{F}_{NG}$ satisfies $R_{\omega}(u)>C_{\omega},$
\item guided eigenfunctions:  $\exists \omega, \omega\not=\emptyset$, such that $\inf\limits_{u\in\mathfrak{F}_G}  R_{\omega}(u) = 0.$
\end{itemize}
The paper deals with a spectral characterization of theses two sets among others things. The layered structure of the operator permits a representation of its spectrum as a subset of points  indexed by $(k,l)\in \mathbb{N}\times\mathbb{N}.$ This allows a geometric description of the guided and non-guided eigenfunction categories. Section 4.1 of the previous version was not correct, now it is corrected, many proofs are simplified and a new general result is added.
\end{abstract}
%\numberwithin{equation}{section}
%\numberwithin{montheo}{section}
%\numberwithin{moncor}{section}
%\numberwithin{monlem}{section}
%\numberwithin{mapropo}{section}
%\numberwithin{maremarque}{section}
%\numberwithin{madef}{section}
\section{Introduction : objectifs et le mod\`{e}le principal}
\label{section-introduction}
Pour un op\'erateur elliptique autoadjoint agissant dans un ouvert $\Omega$ born\'e ou non, on sait que les variations des coefficients de la partie principale, ainsi que certaines conditions au bord, peuvent cr\'eer un ph\'enom\`{e}ne de concentration de l'\'energie. La litt\'erature \'etant abondante, nous ne donnerons que quelques exemples, 
\begin{enumerate}
\item en 1930, Epstein \cite{EP:1} a mis en \'evidence, en milieu non born\'e, des ondes guid\'ees acoustiques pour une famille de vitesses, chacune \'etant une fonction analytique ne d\'ependant que de la coordonn\'ee verticale, ce  qui a donn\'e naissance \`{a} de nombreux travaux. Les applications vont de l'acoustique aux fibres optiques\footnote{Les fibres optiques sont de tr\`{e}s bonnes illustrations. La fibre monophase correspond au mod\`{e}le $N=1$ trait\'e ici : deux vitesses avec un c\oe ur d'indice plus grand (donc avec la vitesse la plus petite). Elle est un bon exemple de la concentration de l'\'energie dans le c\oe ur. Cette concentration est d'autant plus grande que le rayon transversal du c\oe ur (l'analogue du $h_{0}$ de ce travail) est petit. 
%La difficult\'e est de bien placer la source face au c\oe ur, ce qui am\`{e}ne souvent \`{a} lui pr\'ef\'erer la fibre multiphas\'ee (trait\'ee plus tard) qui a d'autres ennuis car il faut espacer les pulses dans le temps.
}, (cf. \cite{Ped-Wh:1} ainsi que \cite{Wil:1} et sa bibliographie).
\item le syst\`{e}me de l'\'elasticit\'e lin\'eaire dans le demi-espace $\Omega=\mathbb{R}^{n}\times(0,+\infty)$ avec la condition de surface libre  
cr\'ee un couplage au bord qui donne naissance \`{a} l'onde de Rayleigh. Elle est  particuli\`{e}rement destructrice lors d'un tremblement de terre (\cite{Schu:1}, \cite{DerGui:1}). Les noms des physiciens Lamb, Love et Stoneley ont aussi \'et\'e donn\'es \`{a} des ph\'enom\`{e}nes voisins (\cite{Cristofol:1}).
\end{enumerate}
Dans ces deux items, le milieu \'etait stratifi\'e et les fonctions propres ne l'\'etaient qu'en un sens g\'en\'eralis\'e car elles n'appartenaient pas au domaine de l'op\'erateur mais des sous-familles guid\'ees \'etaient quand m\^{e}me distingu\'ees. Maintenant notre ouvert $\Omega:=(0,L)\times(O,H)$ est born\'e et notre op\'erateur elliptique $A$ est $-\nabla\cdot(c\nabla)$ ou $-c\Delta$ avec la condition de Dirichlet au bord. Le coefficient de diffusion $c$ \'etant une fonction ne d\'ependant que la seconde coordonn\'ee $x_{2},$ le milieu est stratifi\'e et l'approche par s\'eparation des variables s'impose. Par suite, les valeurs propres sont naturellement index\'ees par deux indices, \`{a} savoir $(\lambda_{k,\ell}), k,\ell\geq 1.$ On construit ainsi une base orthonorm\'ee $\mathcal{B}$ de fonctions propres $(v_{k,\ell})_{k\geq 1,\ell\geq 1}$ associ\'ees aux valeurs propres $\lambda_{k,\ell}.$
 % qui v\'erifient donc $-\nabla\cdot(c\nabla v_{k,\ell})= \lambda_{k,\ell}v_{k,\ell}$. 
  Elles sont de la forme $v_{k,\ell}(x_{1},x_{2})= a_{k,\ell}\sin(\frac{k\pi}{L}x_{1})u_{k,\ell}(x_{2}) $ o\`{u} $u_{k,\ell}(x_{2})$ satisfait
\begin{equation}
\label{equation-modelegeneral:1}
(cu')' + (\lambda_{k,\ell}-c\frac{k^{2}\pi^{2}}{L^{2}})u = 0, u(0) = u(H) = 0.
\end{equation}
On trouvera deux exemples explicites en \eqref{equation-conc-conf-fonctionpropreguidee-1saut:1}  et \eqref{suite(stratification3valeurs)equation:6}. Le r\'esultat qui suit, la d\'emonstration \'etant dans l'annexe \ref{annexe-theoreme-conf-conc-nvelleappr:1}, peut \^{e}tre consid\'er\'e comme un point de d\'epart de notre r\'eflexion car il montre comment certaines fonctions propres peuvent se concentrer dans des r\'egions particuli\`{e}res de $\Omega.$
%donne d\'ej\`{a} une indication sur la localisation de la masse de certaines fonctions propres.
%nous modifions le cadre en supposant :
%%%%%%%%%%%%%%%%%%%%%%%%%%%%%%%%%%%%%%%%%%%%%%%%%%%%%%%%%%%
%%%%%%%%%%%%%%%%%%%%%%%%%%%%%%%%%%%%%%%%%%%%%%%%%%%%%%%%%%%
 \begin{montheo}
 \label{theoreme-conf-conc-nvelleappr:1}
 Soit $\Omega:=(0,L)\times(0,H)$ et l'op\'erateur autoadjoint positif $A = -c\Delta,$ op\'erant dans $\mathcal{H}:=L^{2}(\Omega, c^{-1}{\rm d}x)$ et de domaine $ D(A):= H_{0}^{1}(\Omega)\cap H^{2}(\Omega).$ On suppose le coefficient de diffusion $c\in L^{\infty}$ non constant, positif, born\'e inf\'erieurement mais constant pour $0<h_{0}< x_{2}<H$ :
 \begin{equation*}
\label{hypothese-conf-conc-nvelleappr:1}
\begin{array}{c}
c(x) =\left\lbrace 
\begin{array}{l}
 \gamma_{0}(x_{2}), 0<x_{2}<h_{0} \\
 c_{1}, h_{0}<x_{2}<H,
 \end{array}
 \right.\\
0<\underline{c}:= {\rm ess}\inf\, c< c_{1}.
 \end{array}
 \end{equation*}
  Soit $\omega:=(0,L)\times (a,b)$ o\`{u} l'intervalle  $(a,b)$ satisfait $h_{0}\leq a<b\leq H,$ et $\varepsilon$ assez petit. Il existe une constante $K_{\varepsilon, c}$ telle que, pour chaque valeur propre $\lambda_{k,\ell}$ verifiant $\underline{c}\frac{k^{2}\pi^{2}}{L^{2}}<\lambda_{k,\ell}<(c_{1}-\varepsilon)\frac{k^{2}\pi^{2}}{L^{2}},$ on ait
 \begin{equation*}
 \label{equation-conf-conc-nvelleappr:1}
 \frac{\Vert v_{k,\ell}\Vert^{2}_{L^{2}(\omega)}}{\Vert v_{k,\ell}\Vert^{2}_{L^{2}(\Omega)}}\leq  K_{\varepsilon,c}e^{-2\sqrt{\frac{k^{2}\pi^{2}}{L^{2}}-\frac{\lambda_{k,\ell}}{c_{1}}}(a-h_{0})}.
 \end{equation*}
% o\`{u} $\xi'_{1}:= \sqrt{\frac{k^{2}\pi^{2}}{L^{2}}-\frac{\lambda_{k,\ell}}{c_{1}}}.$
 \end{montheo}
 Notre principal mod\`{e}le sera moins g\'en\'eral mais, en contrepartie, nous obtiendrons des r\'esultats plus pr\'ecis. Nous consid\'erons donc
\vskip.3cm
\noindent
\centerline{\hskip-1cm\qquad\begin{tabular}[h]{|  c  |}
\hline
{\bf Hypoth\`{e}se g\'en\'erale (H0)}\\
$\left\lbrace
\begin{array}{l} 
\bullet\; \mbox{ Deux ouverts : } \omega\subset\Omega:=(0,L)\times(0,H), \omega:=\omega_{1}\times(a,b), 
%0\leq l_{1}<l_{2}\leq L,
0\leq a<b\leq H,\\
\bullet\; \mbox{ la fonction scalaire $ c,$ constante par morceau, ne d\'epend que de la seconde coordonn\'ee $x_{2},$} 
%c:x_{=(x_{1},x_{2})\to c(x) 
%\mbox{ et est }
\\
%ou est de classe } C^{1},\\
\bullet\; \mbox{ la fonction } c \mbox{ est monotone croissante avec }
%c(x) = c(x_{2}), 
0<c_{\min}\leq c\leq c_{\max}<+\infty,\\
\bullet\; \left\lbrace\begin{array}{l}
A=-c\Delta \mbox{ de domaine } D(A):= H_{0}^{1}(\Omega)\cap H^{2}(\Omega), \mbox{ op\'erant dans } \mathcal{H}:=L^{2}(\Omega, c^{-1}{\rm d}x)\\
\mbox{ ou }\\
A = -\nabla\cdot(c\nabla) \mbox{ de domaine } D(A):= \{u\in H_{0}^{1}(\Omega) ; Au\in L^{2}(\Omega)\}, \mbox{ op\'erant dans } L^{2}(\Omega,{\rm d}x).
\end{array}\right.
\end{array}
\right.
$\\
\hline
\end{tabular}}
\vskip.3cm
Nous compl\'etons l'hypoth\`{e}se {\bf (H0)} par la suivante\\
% afin de distinguer nos deux mod\`{e}les de base :\\
\centerline{\hskip-.7cm\qquad\begin{tabular}[h]{|  c  |}
\hline
{\bf Hypoth\`{e}se $\mathbf{(H1)}$ : Mod\`{e}le \`{a} N sauts }\\
$\bullet$\; l'ouvert $\Omega:= (0,L)\times(0,H)\subset\mathbb{R}^{2}$ est partag\'e en $N+1$ parties, $\Omega_{0}:=(0,L)\times(0,h_{0}),$\\ 
$\Omega_{i}:=(0,L)\times(h_{i-1},h_{i}), i = 1,\ldots,N,$ o\`{u}  $0=h_{-1}<h_{0}<\ldots< h_{i}<h_{i+1}<\ldots<h_{N-1}<h_{N}=H,$\\ qui sont s\'epar\'ees par $N$ interfaces  horizontales $S_{i}:=(0,L)\times\{h_{i}\}, i=0,\ldots,N-1;$\\
$\bullet$\; une fonction scalaire $c$ prenant $N+1$ valeurs, \`{a} savoir $c(x) =\left\lbrace \begin{array}{l}
c_{0} \mbox{ si } 0<x_{2}<h_{0},\\
c_{1}  \mbox{ si } h_{0}<x_{2}<h_{1},\\
\vdots\\
c_{N}  \mbox{ si } h_{N-1}<x_{2}<H,
\end{array}\right.$\\
avec $c_{0}<c_{1}<\ldots <c_{N-1}<c_{N}.$\\
\hline
\end{tabular}
}
%\vskip.3cm
%\centerline{\hskip-.7cm\qquad\begin{tabular}[h]{|  c  |}
%\hline
%{\bf Mod\`{e}le $C^{1}$ : Hypoth\`{e}se $\mathbf{(H2)}$}\\
%La fonction scalaire $c$ est strictement croissante et appartient \`{a} $C^{1}(\lbrack 0,H\rbrack).$\\
%\hline
%\end{tabular}}
\vskip.3cm
Pour l'op\'erateur elliptique $A$ autoadjoint  et positif, nous sommes int\'eress\'es par le rapport 
\begin{equation}
\label{notation-conc-conf-generalites:1}
R_{\omega}(v):= \frac{\int_{\omega} \vert v(x)\vert^{2}{\rm d}x}{\int_{\Omega} \vert v(x)\vert^{2}{\rm d}x},
\end{equation}
qui nous permettra de distinguer deux sous-familles de fonctions propres : les fonctions propres guid\'ees et les autres. L'id\'ee de s'int\'eresser au rapport \eqref{notation-conc-conf-generalites:1} n'est pas nouvelle.
Pour le Laplacien sur des vari\'et\'es compactes, analytiques r\'eelles avec une m\'etrique analytique r\'eelle, l'in\'egalit\'e
\begin{equation}
\label{Laurent-Leautaud:1}
C e^{-c\lambda_{j}^{\frac{1}{2}}}\Vert \varphi_{j}\Vert^{2}_{L^{2}(\Omega)}\leq \Vert \varphi_{j}\Vert^{2}_{L^{2}(\omega)}
\end{equation}
est connue, cf. \cite{DoFe:1}. Elle est devenue un cas particulier de l'in\'egalit\'e de Lebeau-Robbiano, cf. \cite{JL:1}, \cite{LR:1}, pour des op\'erateurs \`{a} coefficients variables r\'eguliers dans un ouvert born\'e. Dans un r\'ecent travail de Camille Laurent et Matthieu L\'eautaud, cf.\cite{LaLe:1}, les auteurs g\'en\'eralisent \eqref{Laurent-Leautaud:1} \`{a} des op\'erateurs hypoelliptiques lorsque $\Omega$ est une vari\'et\'e compacte sans bord, disons un tore, toujours sous la condition d'analyticit\'e.
 Nos encadrements sont plus pr\'ecis sur certains sous-espaces vectoriels de $L^{2}(\Omega)$ et ils montrent que l'exposant $\lambda_{j}^{\frac{1}{2}}$ de \eqref{Laurent-Leautaud:1} ne peut \^{e}tre am\'elior\'e (i.e. diminu\'e) que pour certains.\\
 %%%%%%%%%%%%%%%%%%%%%%%%%%%%%%%%%%%%%%
% \begin{figure}[h]
%\centerline{\includegraphics[scale=.5]{valeurspropres1saut-2.pdf}\includegraphics[scale=.5]{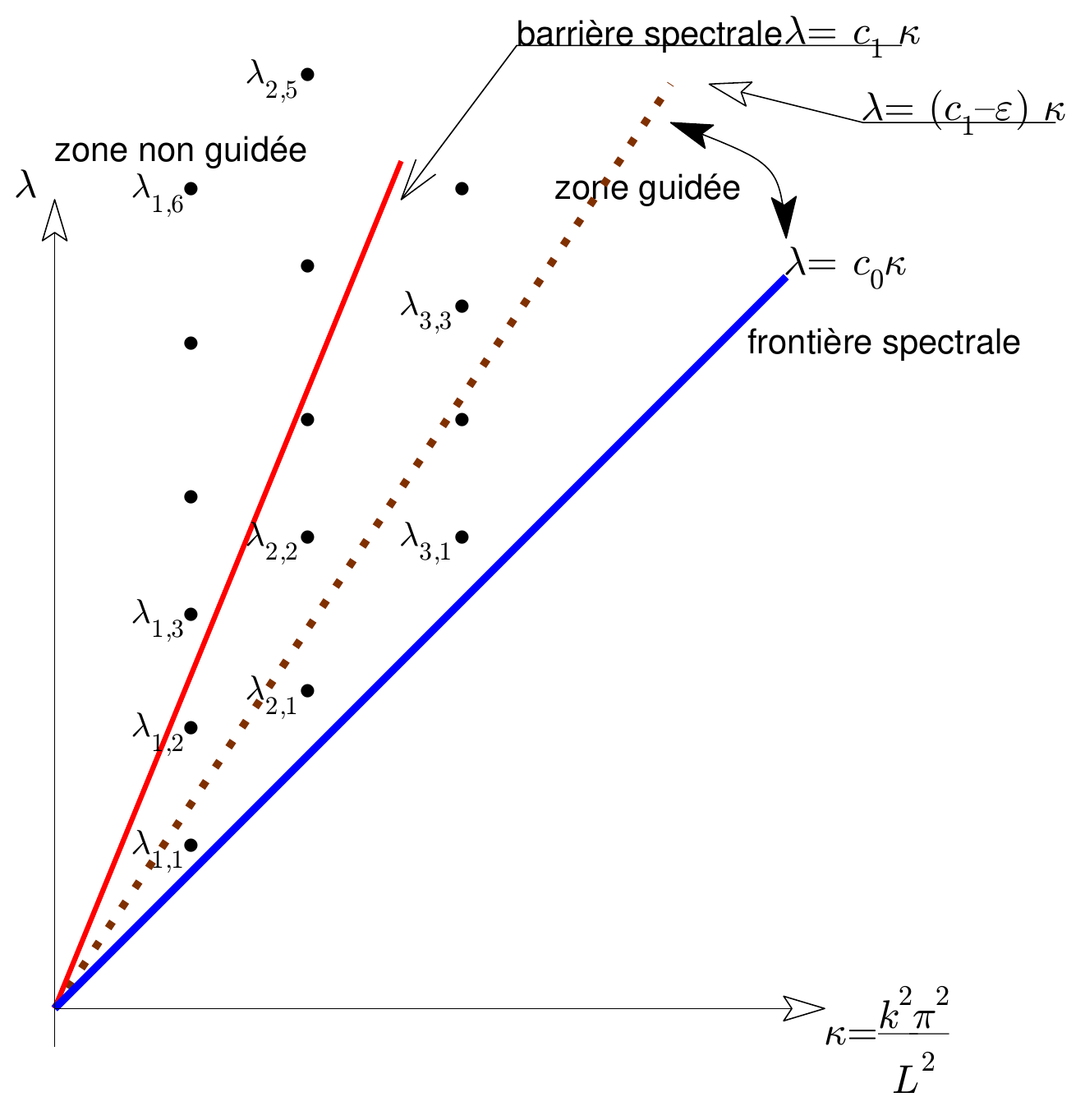}}
%%\label{fig:figure1}
%\caption{\label{fig:figure1}
%{\bf 2 mod\`{e}les \`{a} un saut : r\'epartition des valeurs propres.}
%Dans le cas particulier de gauche, 
%$ \lambda_{3,3} = \lambda_{4,2}$
% et ces valeurs propres sont de part et d'autre de la ligne rouge.}
%\end{figure}
 \begin{figure}[h]
\centerline{\includegraphics[scale=.5]{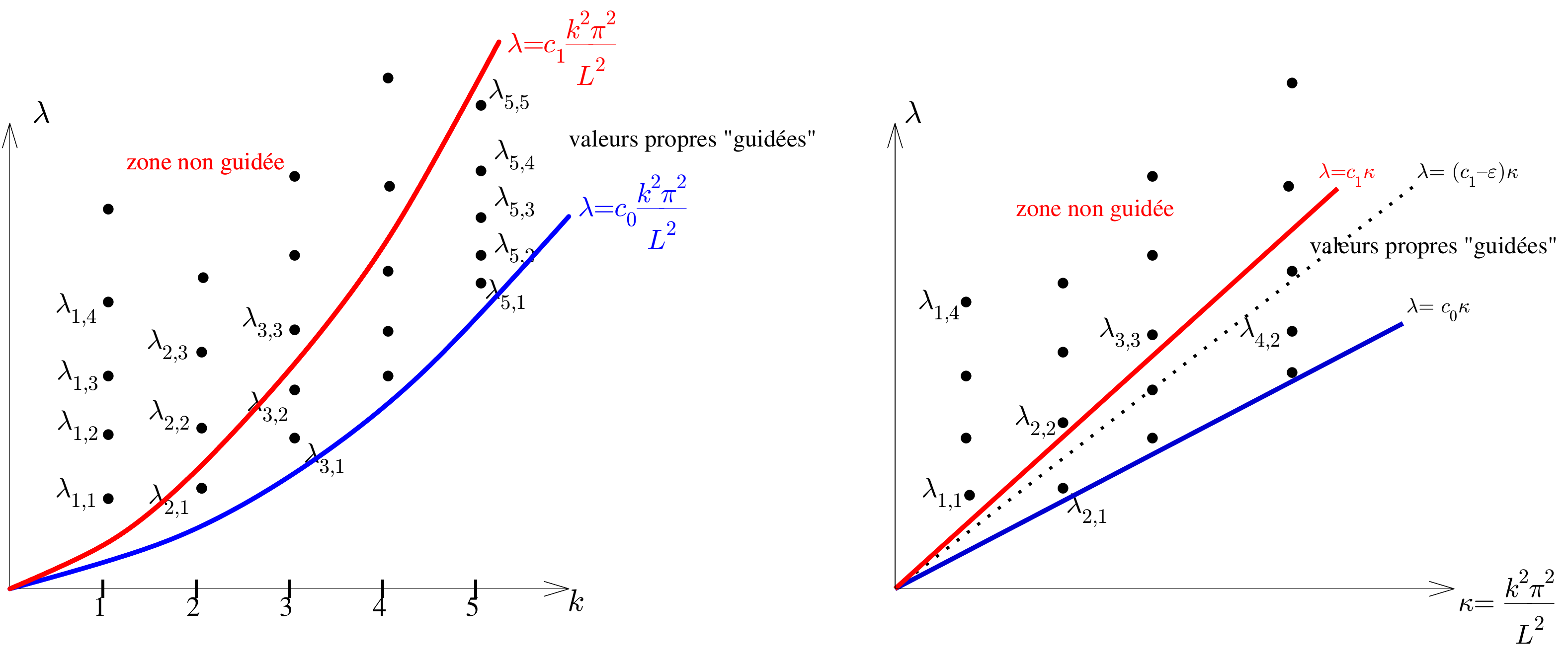}}
%\label{fig:figure1}
 \label{fig:figure1}
\caption{{\bf un mod\`{e}le \`{a} un saut : r\'epartition des valeurs propres.}
Dans ce cas particulier, 
$ \lambda_{3,3} = \lambda_{4,2}$
 et ces valeurs propres sont de part et d'autre de la ligne rouge.}
\end{figure}
 %%%%%%%%%%%%%%%%%%%%%%%%%%%%%%%%%%%%%%
 Prenons le mod\`{e}le \`{a} un saut, i.e. $N=1, 0<c_{0}<c_{1}.$ Pour chaque $k,$ il n'y a qu'un nombre fini de valeurs propres entre $ c_{0}\frac{k^{2}\pi^{2}}{L^{2}}$ et $ c_{1} \frac{k^{2}\pi^{2}}{L^{2}}$, les autres \'etant sup\'erieures \`{a} $\lambda = c_{1}\frac{k^{2}\pi^{2}}{L^{2}}.$ Dans notre partition, les fonctions dites guid\'ees, i.e. appartenant \`{a} $\mathfrak{F}_{G}$, sont associ\'ees aux valeurs propres comprises entre $ c_{0}\frac{k^{2}\pi^{2}}{L^{2}}$ et $ c_{1} \frac{k^{2}\pi^{2}}{L^{2}}$. La partie gauche de la Figure \ref{fig:figure1} les visualise dans cette zone en forme de corne parabolique.
% les valeurs propres associ\'ees \`{a} la famille des fonctions propres guid\'ees : chaque point $(k,\lambda_{k,\ell})$ de la zone comprise entre les deux paraboles $\lambda = c_{0}\frac{k^{2}\pi^{2}}{L^{2}}$ et $\lambda = c_{1} \frac{k^{2}\pi^{2}}{L^{2}}$ correspond \`{a} une fonction propre guid\'ee.
  La concentration en norme $L^{2}$ des fonctions propres associ\'ees aux grandes valeurs propres de cette corne a lieu du c\^{o}t\'e $\Omega_{0}$ de l'interface (th\'eor\`{e}me \ref{theoreme-conc-conf-fonctionguidee-1saut:1bis}, Cas 2) et cette concentration est d'autant plus forte que la valeur propre est grande.
  Noter que ces paraboles d\'eterminent deux zones qui deviennent coniques par un changement de variables (partie droite de la Figure \ref{fig:figure1}). Par exemple $(k,\lambda)\to (\kappa,\sqrt{\lambda})=(k\pi,\sqrt{\lambda})$ permet d'\'etablir une correspondance avec les notions microlocales habituelles en travaillant dans $\Omega\times((\mathbb{N}\frac{\pi}{L})\times\mathbb{R})$\footnote{L'application $(k,\lambda)\to (\frac{k^{2}\pi^{2}}{L^{2}},\lambda)$ convient aussi (cf. la partie droite de la Figure \ref{fig:figure1}) mais ne donne pas une correspondance imm\'ediate avec les coordonn\'ees microlocales.}, d'o\`{u} un autre point de vue pour mesurer la localisation si on utilise la
  \begin{madef}
  \label{definition-distance} On pose  $\tilde{\rho}_{k,\ell}:= (\frac{\sqrt{1+c_{1}^{2}}}{c_{1}}\rho_{k,\ell})^{\frac{1}{2}}$ o\`{u} $\rho_{k,\ell}$ est la distance du point $(\kappa=\frac{k^{2}\pi^{2}}{L^{2}},\lambda_{k,\ell})$ \`{a} la droite d'\'equation $\lambda=c_{1}\kappa$.
\end{madef}

%%%%%%%%%%%%%%%%%%%%%%%%%%%%%%%%%%%%%%%%%%%%%%%%%%%%%%
\begin{montheo}
\label{theoreme-preliminaire:1}
On suppose {\bf (H0)} et {\bf (H1)} avec $N=1, \omega\subset \Omega, \omega:=\omega_{1}\times  (a,b)$ o\`{u} $0\leq a<b\leq H.$\\ 
%\omega = (l_{1},l_{2})\times (a,b), h_{0}\leq a<b\leq H$ et une suite de valeurs propres $(\lambda_{k_{n},\ell_{n}})_{n}, k_{n}\to\infty,$ contenue dans un c\^{o}ne $c_{0}\kappa< \lambda<(c_{1}-\varepsilon)\kappa,$ alors
%\begin{enumerate}
{\bf 1. Cas non-guid\'e.} Il existe une constante $C_{\omega}>0$ telle que
\begin{equation}
\label{equation-preliminaire-1saut:1}
C_{\omega}\leq \frac{\Vert v_{k,\ell}\Vert^{2}_{L^{2}(\omega)}}{\Vert v_{k,\ell}\Vert^{2}_{L^{2}(\Omega)}}, \quad\forall \lambda_{k,\ell}>c_{1}\frac{k^{2}\pi^{2}}{L^{2}},
\end{equation}
sans restriction sur la localisation de l'ouvert $\omega.$\\
{\bf 1. Cas guid\'e.} Soit $0<\varepsilon <c_{1}-c_{0}$ et une suite de valeurs propres $(\lambda_{k_{n},\ell_{n}})_{n}, k_{n}\to\infty,$ contenue dans la corne parabolique $c_{0}\frac{k^{2}\pi^{2}}{L^{2}}< \lambda<(c_{1}-\varepsilon)\frac{k^{2}\pi^{2}}{L^{2}}.$ Alors, si $\omega$ est contenu dans $(0,L)\times(h_{0},H)$ (partie o\`{u} le coefficient de diffusion est le plus grand), on a
\begin{equation}
\label{equation-preliminaire-1saut:2}
\frac{\Vert v_{k_{n},\ell_{n}}\Vert^{2}_{L^{2}(\omega)}}{\Vert v_{k_{n},\ell_{n}}\Vert^{2}_{L^{2}(\Omega)}}= O\left(\frac{e^{-2(a-h_{0})\tilde{\rho}_{k_{n},\ell_{n}}}}{\tilde{\rho}_{k_{n},\ell_{n}}}\right)
\end{equation}
 Ainsi la masse de ces fonctions propres se concentre de plus en plus dans la \og vall\'ee\fg{} qui correspond, ici, \`{a} la partie inf\'erieure de $\Omega.$
%\begin{equation}
%\label{equation-preliminaire:1}
%R_{\omega}(v_{k_{n},\ell_{n}})= O\left(\frac{e^{-2(a-h_{0})\tilde{\rho}_{k,\ell}}}{\tilde{\rho}_{k,\ell}}\right)
%\end{equation}
%\end{enumerate}
\end{montheo}
\begin{maremarque}
\label{remarque-conc-conf-introduction:1}
Trois choses sont importantes \`{a} retenir :
\begin{enumerate}
\item  Si $\lambda$ est une valeur propre multiple (Figure \ref{fig:figure1}, \`{a} gauche), il y a plusieurs possibilit\'es : les fonctions propres associ\'ees peuvent n'\^{e}tre que des \'el\'ements de $\mathfrak{F}_{G}$ (fonctions propres guid\'ees) ou bien de $\mathfrak{F}_{NG}$ (fonctions propres non guid\'ees) ou certaines de $\mathfrak{F}_{G}$ et les autres de $\mathfrak{F}_{NG}$ (par exemple $\lambda_{3,3}$ et $\lambda_{4,2}$ de la figure \ref{fig:figure1}).
\item Pour mesurer le taux de d\'ecroissance, nous avons d'abord utilis\'e $\sqrt{\frac{k^{2}\pi^{2}}{L^{2}}-\frac{\lambda_{k,\ell}}{c_{1}}}$ (th\'eor\`{e}me \ref{theoreme-conf-conc-nvelleappr:1}), puis la distance $\tilde{\rho}_{k_{n},\ell_{n}}$ dans \eqref{equation-preliminaire-1saut:2} mais nous disposons aussi de la valeur propre $\lambda_{k_{n},\ell_{n}}$ comme il est fait en \eqref{Laurent-Leautaud:1} et, plus loin, dans la proposition \ref{proposition-conc-conf-fonctionguidee-1saut:1}.
\item On verra plus loin que selon la suite de fonctions propres guid\'ees et la localisation \og microlocale\fg{} de celle-ci, le taux de convergence de $R_{\omega}(v_{n})$ vers 0 varie de $e^{-b_{1}\lambda_{n}^{\frac{1}{2}}}$ \`{a} $c_{\omega}\lambda_{n}^{-3/2}$ en passant par $e^{-b_{2}\lambda_{n}^{\frac{1}{4}}}, \lambda_{n}$ \'etant la valeur propre associ\'ee \`{a} la fonction propre guid\'ee $v_{n}$.
\end{enumerate}
\end{maremarque}
%%%%%%%%%%%%%%%%%%%%%%%%%%%%%%%%%%%%%%%%%%%%%%%%%%%%%%%%%
 D\'efinir la partition $\mathfrak{F}_{NG}, \mathfrak{F}_{G}$ est une question d\'elicate. Pour le mod\`{e}le \`{a} 1 saut ($N=1, c_{0}< c_{1})$ nous l'avons fait de fa\c{c}on pr\'ecise. Une autre approche est la suivante :\\ 
$\bullet$ 
%{\textbf{\textit{
Pour la famille $\mathfrak{F}_{NG}$ (fonctions propres non guid\'ees) le rapport $R_{\omega}(v)$ est uniform\'ement minor\'e par une constante positive. Autrement dit, on se trouve dans le cas d'un op\'erateur elliptique du second ordre \`{a} coefficients constants (se reporter au th\'eor\`{e}me \ref{theoreme-conc-conf-fonctionnonguidee:1})
%}}}
\\
$\bullet$ 
%{\textbf{\textit{
Si $(v_{n})$ est une suite de fonctions propres guid\'ees, i.e. contenue dans $\mathfrak{F}_{G},$ alors le rapport $R_{\omega}(v_{n})\to 0$ si $\bar{\omega}\subset \Omega_{1},$ i.e. est contenu dans la zone o\`{u} le coefficient de diffusion a sa plus grande valeur.
% }}
\\
Avec cette d\'efinition, la partition $\mathfrak{F}_{G}, \mathfrak{F}_{NG}$ n'est pas unique : le transfert d'un nombre fini d'\'el\'ements de l'une des familles  \`{a} l'autre ne changera pas les propri\'et\'es ci-dessus. Ce n'est pas vraiment g\^{e}nant puisque ce sont les propri\'et\'es asymptotiques des \'el\'ements de la partition qui nous int\'eressent.\\
\\
%En outre, les r\'esultats obtenus pour $v_{n}$ sont beaucoup plus pr\'ecis si, au lieu d'utiliser les valeurs propres $\lambda_{n},$ on utilise la distance $\rho_{k,\ell}$ introduite dans la D\'efinition \ref{definition-distance}.
% \ref{remarque-distancebarriere:1}
%%%%%%%%%%%%%%%%%%%%%%%%%%%%%%%%%%%%%%%%%
Dans le mod\`{e}le \`{a} deux sauts, i.e. $N=2,$ nous conjecturons que les fonctions dites guid\'ees correspondent aux valeurs propres situ\'ees entre les deux paraboles $\lambda = c_{0}\frac{k^{2}\pi^{2}}{L^{2}}$ et $\lambda = c_{2} \frac{k^{2}\pi^{2}}{L^{2}}$ que nous divisons en la zone (0)$= \{c_{0}\frac{k^{2}\pi^{2}}{L^{2}}<\lambda_{k,\ell}<c_{1}\frac{k^{2}\pi^{2}}{L^{2}} \}$ et la zone (I)$=\{c_{1}\frac{k^{2}\pi^{2}}{L^{2}}<\lambda_{k,\ell}<c_{2}\frac{k^{2}\pi^{2}}{L^{2}}\} $.  On distinguera donc trois zones spectrales dont deux correspondant \`{a} une famille de fonctions propres guid\'ees (voir la Figure 2).  Chaque fonction propre associ\'ee \`{a} la zone (0) est une onde presque rasante en $S_{0}$ car elle arrive sur cette interface avec un angle sup\'erieur \`{a} l'angle limite de la r\'eflexion totale\footnote{Pour le voir il suffit d'\'ecrire \eqref{suite(stratification3valeurs)equation:5}-\eqref{suite(stratification3valeurs)equation:6} avec des exponentielles.} ce qui explique que son
%et, malgr\'e cela, ont une 
\'energie soit essentiellement localis\'ee dans $\Omega_{0}$ tandis que les fonctions propres associ\'ees \`{a} la zone (I) peuvent traverser l'interface $S_{0}$ sans difficult\'e mais subissent une r\'eflexion totale sur l'interface $S_{1}$ et sont donc essentiellement localis\'ees dans $\Omega_{0}\cup\Omega_{1}.$
%%%%%%%%%%%%%%%%%%%%%%%%%%%%%%%%%%%%%%%%%%%%%%%%%%%%
\begin{figure}[h]
\hspace{.8cm}\includegraphics[scale=.5]{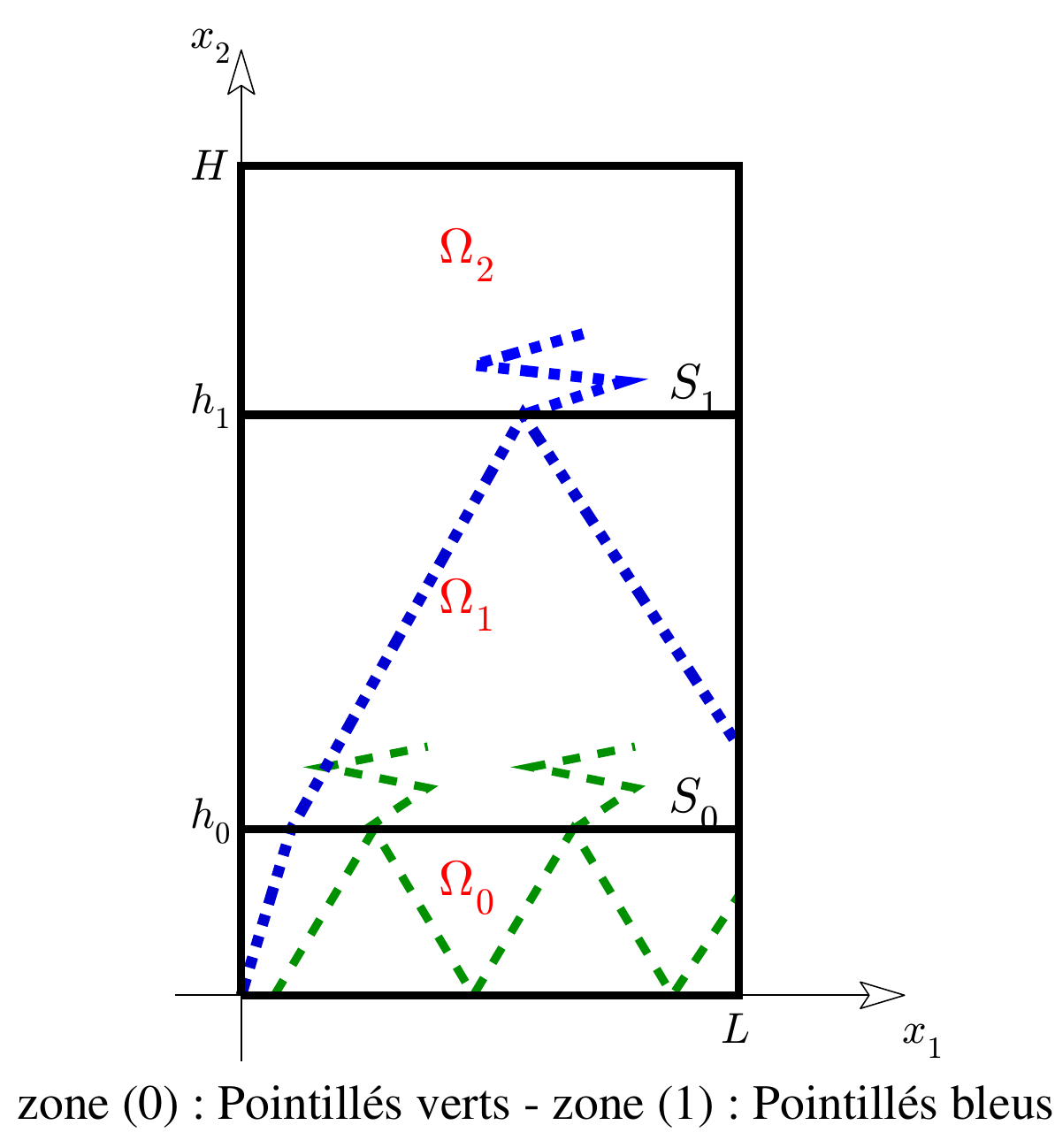}\hspace{.5cm}\includegraphics[scale=.45]{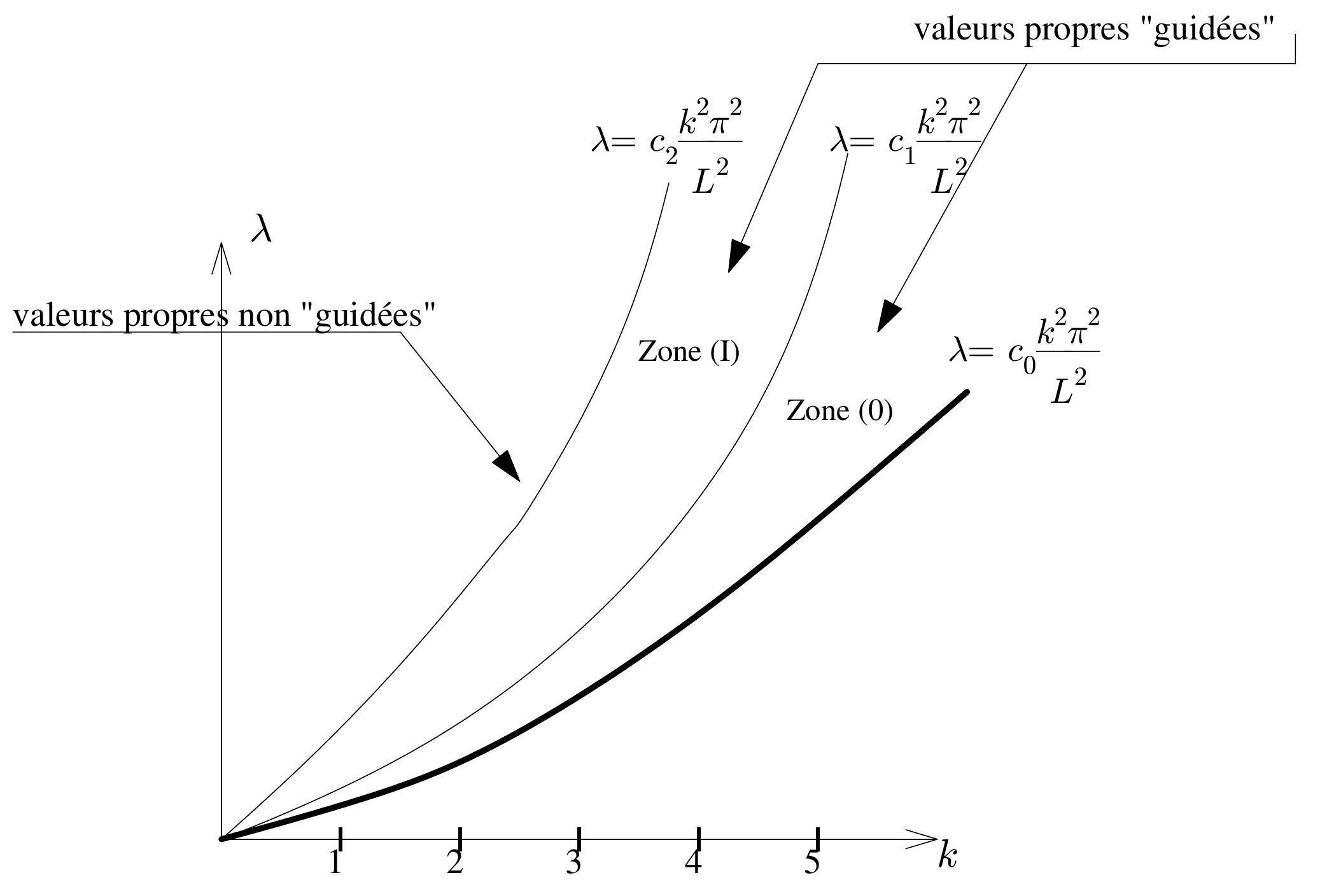}\hfill
\caption{\bf Mod\`{e}le \`{a} deux sauts.}
\label{fig:modele2sauts}
\end{figure}
Pour l'aspect non-guid\'e, on se reportera \`{a} l'annexe \ref{annexe-theoreme-conf-conc-fonctionnonguidee:1} o\`{u} se trouve la d\'emonstration
du
\begin{montheo}[cas non guid\'e]
\label{theoreme-conc-conf-fonctionnonguidee:1}
Soient $\mathbf{(H0)}, \mathbf{(H1)}, \varepsilon>0$ et un ouvert $\omega:=\omega_{1}\times(a,b)$ avec $0\leq a<b \leq H$ qui sont fix\'es. Si nous posons $c_{H}:= \sup_{0<x_{2}<H}c(x_{2})$, alors il existe une constante $C_{\omega}$ strictement positive, telle que l'on ait pour toute fonction propre $v_{k,\ell}$ associ\'ee \`{a} la valeur propre $\lambda_{k,\ell}$
\begin{equation}
\label{equation-conc-conf-fonctionnonguidee:1}
\lambda_{k,\ell}>(c_{H}+\varepsilon)\frac{k^{2}\pi^{2}}{L^{2}}\Longrightarrow 0<C_{\omega}\leq \frac{\Vert v_{k,\ell}\Vert_{L^{2}(\omega)}}{\Vert v_{k,\ell}\Vert_{L^{2}(\Omega)}}\leq 1.
\end{equation}
 La constante $C_{\omega}$ est invariante par toute translation de $\omega.$ Le r\'esultat est aussi valable si  on remplace les hypoth\`{e}ses $\mathbf{(H0)}- \mathbf{(H1)}$ par $\mathbf{(H0)}- \mathbf{(H2)}.$
\end{montheo}
\vskip-.2cm
\centerline{\hskip-.7cm\qquad\begin{tabular}[h]{|  c  |}
\hline
{\bf  Hypoth\`{e}se $\mathbf{(H2)}$ : Mod\`{e}le $C^{1}$}\\
La fonction scalaire $c$ est strictement croissante et appartient \`{a} $C^{1}(\lbrack 0,H\rbrack).$\\
\hline
\end{tabular}}
\vskip.1cm
Les r\'esultats des sections 3 et 4 sur la d\'ecroissance exponentielle des fonctions propres feront penser \`{a} ceux de S. Agmon (cf. \cite{AG:1}) et d'autres travaux o\`{u} les auteurs agissent principalement dans des ouverts non born\'es. L'analogie s'arr\^{e}te l\`{a} pour plusieurs raisons :
\begin{itemize}
\item[$\bullet$] C'est l'influence d'un potentiel $q$ dans l'op\'erateur $-\nabla\cdot(A\nabla) + q$ qui les motive, la distance d'Agmon permettant d'estimer la concentration de certaines fonctions propres en des zones d\'ependant de $q.$ Nous n'avons pas de potentiel car nous consid\'erons des op\'erateurs de la forme $-\nabla\cdot(c\nabla)$\footnote{Si $c\in C^{1}$, un changement de fonction inconnue  permettrait d'introduire un potentiel mais le probl\`{e}me changerait.} ou $-c\Delta$ et ce sont les variations de $c$ qui nous importent.
\item[$\bullet$] Pour les valeurs propres $\lambda$ inf\'erieures \`{a} $\Sigma$, l'infimum  du spectre essentiel, les fonctions propres associ\'ees d\'ecroissent exponentiellement \`{a} l'infini avec un taux li\'e \`{a} la diff\'erence $\sqrt{\Sigma-\lambda}$ (Theorem 4.1 de \cite{AG:1}). Dans notre travail, bien que $\Sigma = +\infty$\footnote{Cas que \cite{AG:1} n'exclue pas.}, on ne voit que pour certaines valeurs propres une d\'ecroissance d'allure exponentielle dans la partie sup\'erieure de $\Omega.$
\end{itemize}
Nous parlons donc, comme les physiciens, de solutions guid\'ees et le substitut \`{a} $\Sigma$ sera la barri\`{e}re spectrale mentionn\'ee dans la Remarque \ref{remarque-distancebarriere:1}. Les auteurs de \cite{AFM:1} \'elargissent dans un r\'ecent travail des id\'ees de S. Agmon mais l'exigence d'un potentiel $q\geq 0, q\not=0,$ ne permet qu'une comparaison partielle \`{a} ce stade avec nos r\'esultats : elle peut cependant \^{e}tre faite avec notre op\'erateur r\'eduit $A_{k}u:= -(cu')' + c\frac{k^{2}\pi^{2}}{L^{2}}u$ puisque le potentiel $c\frac{k^{2}\pi^{2}}{L^{2}}$ satisfait leur condition. 
\vskip.5cm
%%%%%%%%%%%%%%%%%%%%%%%%%%%%%%%%%%%%%%%%%%%%%%%%%%%%
Le plan de ce papier est le suivant.
%La section \ref{section-fonctionnonguidee:1} donne une condition suffisante pour qu'une fonction propre se comporte comme si l'op\'erateur $A$ avait un coefficient de diffusion constant. Cette condition est parfois n\'ecessaire pour le mod\`{e}le \`{a} un saut, il reste \`{a} prouver cette n\'ecessit\'e dans les autres cas. 
%Dans les sections suivantes on s'int\'eresse aux fonctions propres guid\'ees. 
La section \ref{section-modeleunsaut:1} rassemble les propri\'et\'es des suites de fonctions propres guid\'ees du mod\`{e}le \`{a} un saut. C'est le mod\`{e}le le plus simple \`{a} comprendre. La section \ref{section-modele2sauts:1} consid\`{e}re aussi les fonctions propres guid\'ees, cette fois-ci pour le mod\`{e}le \`{a} deux sauts, mais elle est incompl\`{e}te pour le moment. 
Des extensions sont en devenir : un disque ouvert $\Omega$ et un disque int\'erieur $\Omega_{0}$ de m\^{e}me centre, $\Omega_{1}$ \'etant la couronne $\Omega\setminus\Omega_{0},$ ou un tore plat avec un coefficient de diffusion discontinu. Nous pensons que les r\'esultats seront analogues. Il faudrait aussi envisager la g\'en\'eralisation \`{a} des situations moins simples. La plupart des calculs sont renvoy\'es aux annexes.\\
\noindent
{\bf Notations : } Si $(f_{n})$ et $(g_{n})$ sont deux suites de fonctions, la notation $f_{n}\approxeq g_{n}$ signifiera que ces quantit\'es sont comparables, i.e. qu'il existe des nombres r\'eels positifs $M_{1},M_{2}>0$ tels que $M_{1}g_{n}\leq f_{n}\leq M_{2}g_{n}$ d\`{e}s que $n$ est assez grand. Quand il n'y a aucune possibilit\'e d'ambigu\"{\i}t\'e, la notation $C$ d\'esigne une constante strictement positive qui peut prendre diff\'erentes valeurs. 
\section{Concentration des ondes guid\'ees du mod\`{e}le \`{a} un saut}
\label{section-modeleunsaut:1}
Maintenant nous consid\'erons les ondes (ou, suivant le langage adopt\'e, fonctions propres) guid\'ees pour un milieu avec un coefficient de diffusion ayant deux valeurs. De mani\`{e}re intuitive, ces fonctions sont celles dont l'essentiel de la masse est concentr\'ee dans une partie de l'ouvert $\Omega.$ Ici, elles correspondent exactement aux ondes ayant  une r\'eflexion dite totale sur l'interface.\\
On pose par commodit\'e, dans cette section,  $c_{1}>c_{0}=1, L=1$ et parfois $h_{0}= \frac{1}{2}$. Ces restrictions n'ont aucune cons\'equence sur la g\'en\'eralit\'e des r\'esultats car des transformations unitaires permettent de se ramener au cas g\'en\'eral (cf. la remarque \ref{remarque:4}). Avant de d\'etailler la formulation des $v_{k,\ell}$, pr\'ecisons quelques notations dans lesquelles les $\lambda_{k,\ell}$ sont les valeurs propres de l'op\'erateur
%$A$ que nous choisirons de la forme
 $A = -c\Delta$ (la forme divergentielle $-\nabla\cdot c\nabla$ donne des r\'esultats g\'en\'eraux analogues)
\begin{equation}
\label{notation:1}
%\begin{array}{c}
\xi_{0}=\xi_{0}(k,\ell):= (\lambda_{k,\ell}-k^{2}\pi^{2})^{1/2},\qquad
  \xi'_{1}= \xi'_{1}(k,\ell):= (k^{2}\pi^{2}-\frac{\lambda_{k,\ell}}{c_{1}})^{1/2}.
%  \end{array}
\end{equation}
%%%%%%%%%%%%%%%%%%%%%%%%%%%%%%%%%%%%%%%%%%%%%%%%%%%%
 Les fonctions propres dites guid\'ees sont de la forme
\begin{equation}
\label{equation-conc-conf-fonctionpropreguidee-1saut:1}
v_{k,\ell}(x)=a_{k,\ell}\sqrt{2}\sin(k\pi x_{1})\left\lbrace\begin{array}{l}
\sin(\xi_{0}x_{2})\mbox{ si } 0<x_{2}<h_{0},\\
\frac{\sin(\xi_{0}h_{0})}{\sinh(\xi'_{1}(H-h_{0}))}\sinh(\xi'_{1}(H-x_{2}))\mbox{ si }h_{0}<x_{2}<H,
\end{array}\right.
\end{equation}
o\`{u} les coefficients $a_{k,\ell}$ servent \`{a} la normalisation des fonctions propres et les quantit\'es $\lambda_{k,\ell}, \xi_{0}$ et $\xi'_{1}$ sont li\'ees par la relation de dispersion
\begin{equation}
\label{dispersionguidee-1saut:1}
\frac{\tanh(\xi'_{1}(H-h_{0}))}{\xi'_{1}} = -\frac{\tan(\xi_{0}h_{0})}{\xi_{0}}.
\end{equation}
Cette relation exprime la continuit\'e des traces des fonctions propres et de leurs d\'eriv\'ees normales en $x_{2} = h_{0}.$ Noter que les solutions ne v\'erifient ni $ \xi'_{1}=0$ ni $\xi_{0} = 0$. 
\begin{figure}[http!]
\hspace{-.8cm}\includegraphics[scale=.45]{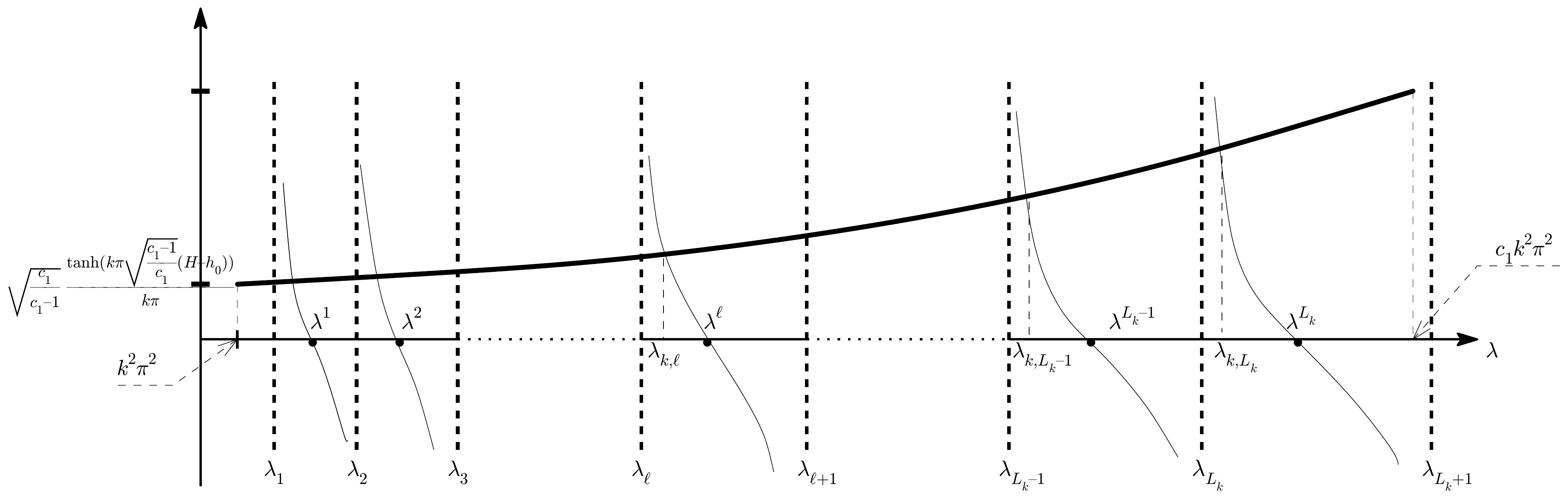}\hfill
\caption{r\'epartition des valeurs propres}
\label{fig:repvp}
\end{figure}
Le membre de gauche de \eqref{dispersionguidee-1saut:1} correspond \`{a} la courbe croissante en gras de la figure \ref{fig:repvp}. Le membre de droite de \eqref{dispersionguidee-1saut:1} est une fonction p\'eriodique en $\xi_{0}h_{0} $ ayant a priori une infinit\'e de branches. Pour chaque $k$ fix\'e, on a $\mathcal{L}_{k}$ valeurs propres guid\'ees $\lambda_{k,\ell}$ class\'ees dans l'ordre croissant $k^{2}\pi^{2}< \lambda_{k,1}< \lambda_{k,2}<\cdots<\lambda_{k,\mathcal{L}_{k}}<c_{1}k^{2}\pi^{2}$ et seules ces $\mathcal{L}_{k}$ branches nous int\'eressent ($k\to \mathcal{L}_{k}$ est une fonction croissante). Elles sont repr\'esent\'ees par une ligne continue fine et les valeurs propres guid\'ees sont les abscisses des points d'intersection des deux courbes de la figure \ref{fig:repvp}.
Dans la figure 
\ref{fig:repvp}
%\ref{fig:figurerepvp}
%, elles correspondent aux abscisses des points d'intersection de la courbe repr\'esentant la fonction croissante $\lambda\mapsto\frac{1}{\xi'_{1}}\tanh(\xi'_{1}(H-h_{0}))$ avec la courbe $\lambda \mapsto -\frac{1}{\xi_{0}}\tan(\frac{\xi_{0}}{2})$
 on a pos\'e 
%\begin{equation}
%\label{notation:2}
$\lambda_{\ell}= \lambda_{\ell}(k) = k^{2}\pi^{2} + (2\ell-1)^{2}\frac{\pi^{2}}{4h_{0}^{2}}$ et $\lambda^{\ell}=\lambda^{\ell}(k):= k^{2}\pi^{2} + \ell^{2}\frac{\pi^{2}}{h_{0}^{2}}, 1\leq \ell\leq \mathcal{L}_{k}+1.
%\end{equation}
$
% o\`{u}, ici, $\xi_{0}= (\lambda-k^{2}\pi^{2})^{1/2},  \xi'_{1}= (k^{2}\pi^{2}-\frac{\lambda}{c_{1}})^{1/2}.$ La figure 3 repr\'esente ceci o\`{u} on a pos\'e

%\begin{maremarque}
%\label{remarque:2} \st{Si $(u_{k_{n},\ell_{n}})_{n}$ est une suite de solutions guid\'ees, il n'est pas possible que $\xi_{0}(k_{n},\ell_{n}) \to 0$ puisque l'on aurait \`{a} la fois $\xi'_{1}\to\infty$ et $\frac{\tanh(\xi'_{1}(H-h_{0}))}{\xi'_{1}} \to -h_{0},$ ce qui est impossible puisque l'expression de gauche est positive.  Par cons\'equent, $\xi'_{1}(k_{n},\ell_{n}) \to +\infty$ et $\xi_{0}$ born\'e impliquent $\sin(\xi_{0}h)\to 0.$ De m\^{e}me, $\sin(\xi_{0}h_{0})\to 0$ implique $\xi'_{1}\to\infty.$}
%\end{maremarque}

%%%%%%%%%%%%%%%%%%%%%%%%%%%%%%%%%%%%%%%%%%%%%%%%%%%%

%%%%%%%%%%%%%%%%%%%%%%%%%%%%%%%%%%%%%%%%%%%%%%%%%%%%
\begin{maremarque}
\label{remarque-distancebarriere:1}
Les quantit\'es $\xi_{0}$ et $\xi'_{1}$ ont une interpr\'etation spectrale g\'eom\'etrique simple si on pose $\kappa:=k^{2}\pi^{2}$ : dans le nouveau rep\`{e}re $(\kappa,\lambda)$
%\footnote{Ce n'est pas celui qui correspond exactement aux coordonn\'ees microlocales usuelles.}
 les paraboles deviennent des droites et la quantit\'e $(\xi'_{1})^{2}$ est \'egale \`{a} $\frac{\sqrt{1+c_{1}^{2}}}{c_{1}} \rho_{k,\ell}$ o\`{u} $\rho_{k,\ell}$ est la distance du point $M_{\lambda_{k,\ell}}=(\kappa, \lambda_{k,\ell})$ \`{a} la barri\`{e}re spectrale d'\'equation $\lambda = c_{1}\kappa.$ De son c\^{o}t\'e, $\sqrt{2}\xi_{0}^{2}$ est la distance de $M_{\lambda_{k},\ell}$ \`{a} la fronti\`{e}re spectrale d'\'equation $\lambda = \kappa.$
\end{maremarque}
C'est donc le rapport
%%%%%%%%%%%%%%%%%%%%%%%%%%%%%%%%%%%%%%%%%%%%%%%%%%%%
%Pour $k$ fix\'e, on pose $\mathcal{L}= \mathcal{L}_{k}:= \max\{\ell;\ell\geq1, \lambda_{k,\ell}<c_{1}k^{2}\pi^{2}\}.$ C'est le plus grand entier $\ell$ tel que $\lambda_{k,\ell}$ est une valeur propre guid\'ee. On posera aussi dans l'article
\begin{eqnarray}
\label{notation-conc-conf-fonction guidee-1saut :3}
%C_{k;\omega} &:= & {\rm vol}(\omega)\left( 1-\frac{\sin(k\pi(l_{2}-l_{1}))}{k\pi(l_{2}-l_{1})}\cos(k\pi(l_{2}+l_{1}))\right),\\
R_{k,\ell;\omega}& := &\frac{\int_{\omega} \vert v_{k,\ell}(x)\vert^{2}{\rm d}x}{\int_{\Omega} \vert v_{k,\ell}(x)\vert^{2}{\rm d}x}
\end{eqnarray}
qui nous int\'eresse pour les fonctions propres $v_{k,\ell}, 1\leq \ell\leq \mathcal{L}_{k}, k\geq 1,$ avec $\omega:= \omega_{1}\times(a,b),  0\leq a<b\leq H$ et $v_{k,\ell}(x):= a_{k,\ell}\sqrt{\frac{2}{L}}\sin(k\pi x_{1})u_{k,\ell}(x_{2}), u_{k,\ell}$ \'etant d\'efinie implicitement par \eqref{equation-conc-conf-fonctionpropreguidee-1saut:1}. 
%Noter qu'il ne faut pas confondre les nombres r\'eels $l_{1}$ et $l_{2}$ avec les entiers $\ell_{n}$, que l'on utilise. 
Notons qu'\'etudier les $v_{k,\ell}$ revient \`{a} \'etudier les $u_{k,\ell}$ car $\frac{\int_{\omega} \vert v_{k,\ell}(x)\vert^{2}{\rm d}x}{\int_{\Omega} \vert v_{k,\ell}(x)\vert^{2}{\rm d}x} = C_{k;\omega}\frac{\int_{\omega} \vert u_{k,\ell}(x)\vert^{2}{\rm d}x}{\int_{\Omega} \vert u_{k,\ell}(x)\vert^{2}{\rm d}x}$ et, si $\omega_{1} = (\alpha,\beta),$ le coefficient $C_{k;\omega} :=  \frac{\beta -\alpha}{L}\left( 1-\frac{\sin(k\pi(\beta-\alpha))}{k\pi(\beta-\alpha)}\cos(k\pi(\alpha+\beta))\right)$ avec $C_{k;\omega}\to_{k\to\infty} \frac{\beta -\alpha}{L}.$
%%%%%%%%%%%%%%%%%%%%%%%%%%%%%%%%
\subsection{Les \'enonc\'es}
\label{soussection-conc-conf-fonctionguidee-1saut:1}
Les suites de fonctions propres associ\'ees \`{a} des valeurs propres v\'erifiant $(k_{n}\pi)^{2}<\lambda_{k_{n},\ell_{n}}<(c_{1}-\varepsilon)(k_{n}\pi)^{2}$ sont sans r\'ep\'etition et par cons\'equent $k_{n}\to\infty$. Un r\'esum\'e rapide pour la quantit\'e $\mathcal{R}_{n}:= R_{k_{n},\ell_{n};\omega}$ est
\begin{enumerate}
 \item si $\overline{\omega}\subset \Omega_{1}$ alors $\mathcal{R}_{n}$ tend exponentiellement vers 0 quand $n\to 0$ (cf.\eqref{equation-preliminaire-1saut:2}).
 \item si $\omega\subset \Omega_{0}$ alors $C_{1}{\rm vol}(\omega)<\mathcal{R}_{n}<C_{2}{\rm vol}(\omega), C_{1}, C_{2}>0.$
 \end{enumerate}
 Ces deux points sont d\'evelopp\'es dans le th\'eor\`{e}me \ref{theoreme-conc-conf-fonctionguidee-1saut:1bis}. Le corollaire \ref{corollaire-conc-conf-fonctionguidee-1saut:2} les pr\'ecise lorsque l'indice $\ell_{n}$ est fix\'e et la proposition \ref{proposition-conc-conf-fonctionguidee-1saut:1} \'etudie $\mathcal{R}_{n}$ pour la suite des valeurs propres la plus proche de $c_{1}(k\pi)^{2}.$ De plus, le lemme \ref{lemme-conc-conf-fonctionguidee-1saut:2} permet de convertir les coefficients de d\'ecroissance en terme de $\lambda_{k_{n},\ell_{n}}$ au lieu de $\xi'_{1}.$ Pour le cas o\`{u} $\omega$ est \`{a} cheval sur l'interface $S_{0}$ on se reportera \`{a} la Remarque \ref{remarque-conc-conf-fonctionguidee-1saut:0bis}.
 \begin{montheo}[cas guid\'e avec un saut]
\label{theoreme-conc-conf-fonctionguidee-1saut:1bis}
Soit l'ouvert $\omega:= \omega_{1}\times(a,b)\subset\Omega$ et une suite de fonctions propres guid\'ees $(v_{k_{n},\ell_{n}})_{n\geq 1}.$ On a\\
\begin{eqnarray}
\!\!\bullet\, \mbox{si }\omega\subset\Omega_{1},& \left\lbrace
\begin{array}{ll}
\label{equation-conc-conf-fonctionpropreguidee-1saut:3}
 (\xi'_{1}(k_{n},\ell_{n})\to\infty) \!\!\!&\Longrightarrow \frac{\Vert v_{k_{n},\ell_{n}}\Vert^{2}_{L^{2}(\omega)}}{\Vert v_{k_{n},\ell_{n}}\Vert^{2}_{L^{2}(\Omega)}}  \sim  \frac{{\rm vol}(\omega_{1})}{h_{0}}\frac{\sin^{2}(\xi_{0}h_{0})}{1-\frac{\sin(2\xi_{0}h_{0})}{2\xi_{0}h_{0}}}    \frac{e^{-2\xi'_{1}(a-h_{0})}}{\xi'_{1}},\\
 (\sup_{n}\xi'_{1}(k_{n},\ell_{n})<\infty)  \!\!\!&\Longrightarrow \inf_{n}\frac{\Vert v_{k_{n},\ell_{n}}\Vert^{2}_{L^{2}(\omega)}}{\Vert v_{k_{n},\ell_{n}}\Vert^{2}_{L^{2}(\Omega)}}>0.
 \end{array}\right.\\
\label{equation-conc-conf-fonctionpropreguidee-1saut:3''}
\!\!\!\bullet\, \mbox{ si }\omega\subset\Omega_{0},&
\inf_{n}\frac{\Vert v_{k_{n},\ell_{n}}\Vert^{2}_{L^{2}(\omega)}}{\Vert v_{k_{n},\ell_{n}}\Vert^{2}_{L^{2}(\Omega)}}>0.
\end{eqnarray}
\end{montheo}
\begin{maremarque}
\label{remarque-conc-conf-fonctionguidee-1saut:0}
%Nous n'avons consid\'er\'e que des suites de fonctions propres sans r\'ep\'etition.
On notera que $\xi'_{1}(k_{n},\ell_{n})\to\infty$ si, asymptotiquement, $\ell_{n}\leq \mathcal{L}_{k_{n}}-1.$ En effet, si $\ell_{n}\leq \mathcal{L}_{k_{n}}-1,$ on a $c_{1}(\xi'_{1}(k_{n},\ell_{n}))^{2} \geq c_{1}(\xi'_{1}(k_{n},\mathcal{L}_{k_{n}}-1))^{2} = c_{1}k_{n}^{2}\pi^{2} -\lambda_{k_{n},\mathcal{L}_{k_{n}}-1}>\lambda_{\mathcal{L}_{k_{n}}}-\lambda^{\mathcal{L}_{k_{n}}-1} =(\frac{\pi}{2h_{0}})^{2}(4\mathcal{L}_{k_{n}}-3)\to\infty$ puisque $\mathcal{L}_{k_{n}}\to \infty$. Par suite, si $\varepsilon$ est fix\'e tel que $0<\varepsilon<c_{1}-1$, l'examen de la figure \ref{fig:repvp} montre que la condition $\ell_{n}\leq \mathcal{L}_{k_{n}}-1$ est alors v\'erifi\'ee pour $n$ grand si $(k_{n}\pi)^{2}<\lambda_{k_{n},\ell_{n}}\leq (c_{1}-\varepsilon)(k_{n}\pi)^{2}.$ 
%\end{enumerate}
\end{maremarque}
%%%%%%%%%%%%%%%%%%%%%%%%%%%%%%%
\begin{moncor}
\label{corollaire-conc-conf-fonctionguidee-1saut:2}
Soit $\omega:= \omega_{1}\times(a,b)\subset\Omega$ et une suite de  fonctions propres guid\'ees $(v_{k,\ell})_{k}$, l'indice $\ell$ \'etant fix\'e. On a alors\\
$\bullet$ Cas 1 : $\omega\subset\Omega_{1}$, i.e. $h_{0}\leq a<b,$ \\
\begin{equation}
\label{equation-conc-conf-fonctionpropreguidee-1saut:2}
a_{1}\lambda_{k,\ell}^{-\frac{3}{2}}e^{-a_{2}\lambda_{k,\ell}^{\frac{1}{2}}}\Vert v_{k,\ell}\Vert^{2}_{L^{2}(\Omega)}\sim_{k\to\infty} \Vert v_{k,\ell}\Vert^{2}_{L^{2}(\omega)},
\end{equation}
o\`{u} $a_{1} = \frac{{\rm vol}(\omega_{1})(\ell\pi)^{2}}{h_{0}^{3}}\left(\frac{c_{1}}{c_{1}-1}\right)^{\frac{3}{2}}$ et $a_{2} = 2(a-h_{0})\sqrt{\frac{c_{1}-1}{c_{1}}}.$\\
$\bullet$ Cas 2 : $\omega\subset\Omega_{0},$  i.e. $0\leq a<b<h_{0},$
\begin{equation}
\label{equation-conc-conf-fonctionpropreguidee-1saut:5}
\frac{\Vert v_{k,\ell}\Vert^{2}_{L^{2}(\omega)}}{\Vert v_{k,\ell}\Vert^{2}_{L^{2}(\Omega)}}\sim_{k\to\infty} \frac{1}{h_{0}}{\rm vol}(\omega)\left(1 - \frac{\sin(\ell\frac{\pi}{h_{0}}(b-a))}{2\ell\frac{\pi}{h_{0}}(b-a)} \cos(2\ell\frac{\pi}{h_{0}}(a+b)) \right).
\end{equation}
\end{moncor}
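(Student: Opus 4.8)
The plan is to derive both cases from Theorem~\ref{theoreme-conc-conf-fonctionguidee-1saut:1bis} by feeding in the asymptotic behaviour of $\xi_0=\xi_0(k,\ell)$ and $\xi'_1=\xi'_1(k,\ell)$ valid when $\ell$ is frozen and $k\to\infty$. First I would check that we are in the regime $\xi'_1\to\infty$: since $\mathcal{L}_k\to\infty$, a fixed $\ell$ satisfies $\ell\le\mathcal{L}_k-1$ for $k$ large, so Remark~\ref{remarque-conc-conf-fonctionguidee-1saut:0} gives $\xi'_1(k,\ell)\to\infty$; in particular we fall into the first line of the theorem for Case~1. Letting $\xi'_1\to\infty$ in the dispersion relation \eqref{dispersionguidee-1saut:1} sends the left-hand side to $0$ (because $\tanh(\xi'_1(H-h_0))\to1$), which forces $\tan(\xi_0h_0)\to0^-$; identifying the $\ell$-th branch yields $\xi_0h_0\to\ell\pi$ from below. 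Hence $\xi_0^2=\lambda_{k,\ell}-k^2\pi^2\to(\ell\pi/h_0)^2$, and from the identity $(\xi'_1)^2=\tfrac{c_1-1}{c_1}\lambda_{k,\ell}-\xi_0^2$ I obtain both $\lambda_{k,\ell}=k^2\pi^2(1+o(1))$ and $\xi'_1\sim\sqrt{\tfrac{c_1-1}{c_1}}\,\sqrt{\lambda_{k,\ell}}$.

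The crux is the step establishing the precise size of $\sin^2(\xi_0h_0)$, which vanishes in the limit and cannot be replaced naively by its limiting value. Setting $\delta:=\xi_0h_0-\ell\pi$ and linearising \eqref{dispersionguidee-1saut:1} around $\xi_0h_0=\ell\pi$ (so that $\tan(\xi_0h_0)\simeq\delta$ and the left-hand side $\simeq1/\xi'_1$) gives $\delta\sim-\xi_0/\xi'_1$, whence $\sin^2(\xi_0h_0)\sim\delta^2\sim\xi_0^2/(\xi'_1)^2$, while $\sin(2\xi_0h_0)/(2\xi_0h_0)\to0$. Inserting these two facts into the first line of \eqref{equation-conc-conf-fonctionpropreguidee-1saut:3} collapses the prefactor to $\tfrac{\mathrm{vol}(\omega_1)}{h_0}\,\xi_0^2/(\xi'_1)^3$. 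It then remains to convert $\xi_0,\xi'_1$ into $\lambda_{k,\ell}$: using $\xi_0^2\to(\ell\pi/h_0)^2$ and $\xi'_1\sim\sqrt{\tfrac{c_1-1}{c_1}}\sqrt{\lambda_{k,\ell}}$ turns $\xi_0^2/(\xi'_1)^3$ into $a_1\lambda_{k,\ell}^{-3/2}$, while $2\xi'_1(a-h_0)=a_2\sqrt{\lambda_{k,\ell}}+o(1)$ (the $o(1)$ coming from the lower-order term of $\xi'_1$) turns $e^{-2\xi'_1(a-h_0)}$ into $e^{-a_2\sqrt{\lambda_{k,\ell}}}(1+o(1))$. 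This yields Case~1.

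For Case~2 the theorem only provides a positive lower bound, so I would compute directly, exploiting the reduction $R_{k,\ell;\omega}=C_{k;\omega}\,\dfrac{\int_a^b u_{k,\ell}^2\,dx_2}{\int_0^H u_{k,\ell}^2\,dx_2}$ with $C_{k;\omega}\to\mathrm{vol}(\omega_1)$. Since $\overline\omega\subset\Omega_0$, here $u_{k,\ell}(x_2)=\sin(\xi_0x_2)$ and the numerator equals $\tfrac{b-a}{2}-\tfrac{1}{4\xi_0}\big(\sin(2\xi_0b)-\sin(2\xi_0a)\big)$, which converges, after $\xi_0\to\ell\pi/h_0$ and a sum-to-product identity, to the announced oscillatory expression. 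For the denominator, $\int_0^{h_0}\sin^2(\xi_0x_2)\,dx_2\to h_0/2$, whereas the $\Omega_1$-contribution is $\dfrac{\sin^2(\xi_0h_0)}{\sinh^2(\xi'_1(H-h_0))}\int_{h_0}^H\sinh^2(\xi'_1(H-x_2))\,dx_2\sim\dfrac{\sin^2(\xi_0h_0)}{2\xi'_1}\sim\dfrac{\xi_0^2}{2(\xi'_1)^3}\to0$ by the estimate of the previous paragraph, so $\int_0^H u_{k,\ell}^2\,dx_2\to h_0/2$. Combining numerator and denominator and multiplying by $C_{k;\omega}$ gives the stated formula.

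The main obstacle is the sharp estimate $\sin^2(\xi_0h_0)\sim\xi_0^2/(\xi'_1)^2$: the limiting value $\xi_0h_0=\ell\pi$ makes this quantity vanish, so one genuinely has to linearise the transcendental dispersion relation \eqref{dispersionguidee-1saut:1} and control the first correction $\delta=\xi_0h_0-\ell\pi$. Everything else — the convergence of $\xi_0$, the expression of $\xi'_1$ in terms of $\lambda_{k,\ell}$, and the elementary integrals of Case~2 — is routine once this estimate and Theorem~\ref{theoreme-conc-conf-fonctionguidee-1saut:1bis} are in hand. Notably, the very same estimate is what renders the $\Omega_1$-part of the normalisation negligible in Case~2.
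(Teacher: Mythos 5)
Your proof is correct, and its skeleton is the paper's own: Case 1 comes from feeding fixed-$\ell$ asymptotics of $\xi_0$, $\xi'_1$ and $\sin(\xi_0 h_0)$ into the first line of \eqref{equation-conc-conf-fonctionpropreguidee-1saut:3}, and Case 2 from letting $\xi_0\to\ell\pi/h_0$, $\xi'_1\to\infty$ in the explicit ratio -- your direct computation of the integrals reproduces exactly the paper's formula \eqref{equation-conc-conf-fonctionguidee-1saut:9}. The genuine difference lies in how the crucial estimate on $\sin^2(\xi_0h_0)$ is obtained. The paper invokes item 4 of Lemma \ref{lemme-conc-conf-fonctionguidee-1saut:2}, whose proof in Annexe \ref{annexe-Lemme preparatoire:1} goes through a change of variable, convexity of the two curves and a tangent-line construction to get a sharp first-order expansion in $1/k$; you instead obtain the weaker but sufficient equivalence $\sin^2(\xi_0h_0)\sim\xi_0^2/(\xi'_1)^2$ by a one-line linearization of the dispersion relation \eqref{dispersionguidee-1saut:1} around $\xi_0h_0=\ell\pi$, combined with the identity $(\xi'_1)^2=\frac{c_1-1}{c_1}\lambda_{k,\ell}-\xi_0^2$. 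This is lighter, and it is in fact the version consistent with the stated constant $a_1$: your $\vert\sin(\xi_0h_0)\vert\sim\frac{\ell}{h_0}\sqrt{\frac{c_1}{c_1-1}}\,\frac{1}{k}$ carries the factor $\ell$ that produces $(\ell\pi)^2$ in $a_1$, whereas the printed formula \eqref{equationcomportementxi0:1} of the lemma has lost it (an apparent typo). Two small points you should make explicit: (i) to conclude $\xi_0h_0\to\ell\pi$ rather than some other multiple of $\pi$, you need that the fixed-$\ell$ sequence stays on the $\ell$-th branch, i.e. $\xi_0h_0\in((\ell-\frac{1}{2})\pi,\ell\pi)$, which is the branch structure behind Figure 3 that the paper takes as established; (ii) the limit your Case 2 computation actually yields is $1-\frac{\sin(\ell\frac{\pi}{h_0}(b-a))}{\ell\frac{\pi}{h_0}(b-a)}\cos(\ell\frac{\pi}{h_0}(a+b))$, which agrees with \eqref{equation-conc-conf-fonctionguidee-1saut:9} but not literally with the factors of $2$ printed in \eqref{equation-conc-conf-fonctionpropreguidee-1saut:5}; those appear to be typos in the statement, not an error on your side.
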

\begin{maremarque}
\label{remarque-conc-conf-fonctionguidee-1saut:1}
Il est important de noter que pour une suite v\'erifiant $\xi'_{1}\to\infty$, \eqref{equation-conc-conf-fonctionpropreguidee-1saut:3} et \eqref{equation-conc-conf-fonctionpropreguidee-1saut:6} impliquent 
 que la norme $L^{2}$ se concentre dans $ \Omega_{0}$ puisque $\frac{\Vert v_{k_{n},\ell_{n}}\Vert_{L^{2}(\Omega_{0})}}{\Vert v_{k_{n},\ell_{n}}\Vert_{L^{2}(\Omega)}} = 1- O(\frac{1}{(\xi'_{1})^{\frac{1}{2}}}).$ 
La suite $(v_{k,\mathcal{L}_{k}})_{k}$ constitue une transition entre les fonctions propres guid\'ees et celles qui ne le sont pas. Ce qui suit donne une autre approche du comportement de $(v_{k,\mathcal{L}_{k}})_{k}$. et illustre cette transition.
\end{maremarque}
%%%%%%%%%%%%%%%%%%%%%%%%%%%%%%%%%%%%%%%%%%%%%%%%%%%%
\begin{mapropo}
\label{proposition-conc-conf-fonctionguidee-1saut:1} 
Supposons $h_{0}=\frac{1}{2}$\footnote{ Ce choix ne change pas la nature du r\'esultat puisque l'on ne fixe pas en m\^{e}me temps la valeur num\'erique de $c_{1}>1.$}, $\bar{\omega}\subset\Omega_{1}$
\begin{enumerate}
\item
\label{itemxi-1:6ter}
%Si $\sqrt{c_{1}-1}\in \mathbb{R}\setminus \mathbb{N},$ 
Il existe des constantes positives $C'_{1}, C'_{2}$ telles que 
%la suite de fonctions propres guid\'ees correspondant aux valeurs propres $\lambda_{k,\mathcal{L}_{k}}$ v\'erifie l'encadrement
\begin{equation}
\label{equation-conc-conf-fonctionguidee-1saut-itemxi-1:6ter}
C'_{1}e^{-C'_{2}\lambda_{k,\mathcal{L}_{k}}^{\frac{1}{4}}}\Vert v_{k,\mathcal{L}_{k}}\Vert^{2}_{L^{2}(\Omega)} \leq \Vert v_{k,\mathcal{L}_{k}}\Vert^{2}_{L^{2}(\omega)} 
%\leq  \Vert u_{k,\mathcal{L}_{k}}\Vert^{2}_{L^{2}(\Omega)}.
\end{equation}
\item
 \label{itemxi-1:0}
 Si $\sqrt{c_{1}-1}\in\mathbb{N},$ il existe des constantes positives $C_{1}, C'_{1},C_{2},C'_{2}$ telles que
%la suite de fonctions propres guid\'ees correspondant aux valeurs propres $\lambda_{k,\mathcal{L}_{k}}$ v\'erifie l'encadrement
\begin{equation}
\label{equation-conc-conf-fonctionguidee-1saut-xi-1:6''}
C'_{1}e^{-C'_{2}\lambda_{k,\mathcal{L}_{k}}^{\frac{1}{4}}}\Vert v_{k,\mathcal{L}_{k}}\Vert^{2}_{L^{2}(\Omega)} \leq \Vert v_{k,\mathcal{L}_{k}}\Vert^{2}_{L^{2}(\omega)} 
\leq  C_{1}e^{-C_{2}\lambda_{k,\mathcal{L}_{k}}^{\frac{1}{4}}}\Vert v_{k,\mathcal{L}_{k}}\Vert^{2}_{L^{2}(\Omega)}.
\end{equation}
\item
\label{itemxi-1:6quarto}
Si $\sqrt{c_{1}-1}\in \mathbb{R}\setminus \mathbb{N},$ il existe des constantes positives $C_{1}, C'_{1},C_{2},C'_{2}$ et une sous-suite de fonctions propres guid\'ees correspondant \`{a} des valeurs propres $\lambda_{k,\mathcal{L}_{k}}$ v\'erifiant l'encadrement
\begin{equation}
\label{xi-1:6quarto}
C'_{1}e^{-C'_{2}\lambda_{k,\mathcal{L}_{k}}^{\frac{1}{4}}}\Vert v_{k,\mathcal{L}_{k}}\Vert^{2}_{L^{2}(\Omega)} \leq \Vert v_{k,\mathcal{L}_{k}}\Vert^{2}_{L^{2}(\omega)} \leq C_{1}e^{-C_{2}\lambda_{k,\mathcal{L}_{k}}^{\frac{1}{4}}}\Vert v_{k,\mathcal{L}_{k}}\Vert^{2}_{L^{2}(\Omega)}.
\end{equation}
\end{enumerate}
\end{mapropo}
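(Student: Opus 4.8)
Le plan est de tout ramener, pour chaque $k$, au seul paramètre $\xi'_1(k,\mathcal{L}_k)$ : dès que $\bar\omega\subset\Omega_1$, le théorème \ref{theoreme-conc-conf-fonctionguidee-1saut:1bis} contrôle entièrement le rapport $R_{k,\mathcal{L}_k;\omega}$ par $\xi'_1$, le taux de décroissance étant $e^{-2(a-h_0)\xi'_1}/\xi'_1$ lorsque $\xi'_1\to\infty$. Je partirais de l'identité $\xi_0^2+c_1(\xi'_1)^2=(c_1-1)k^2\pi^2$, conséquence de \eqref{notation:1} avec $c_0=1$. En posant $\xi_0^{\mathrm{b}}:=k\pi\sqrt{c_1-1}$ (valeur de $\xi_0$ sur la barrière $\xi'_1=0$) et $\delta_k:=\xi_0^{\mathrm{b}}-\xi_0(k,\mathcal{L}_k)$, on obtient
\begin{equation*}
(\xi'_1)^2=\frac{(\xi_0^{\mathrm{b}})^2-\xi_0^2}{c_1}=\frac{\delta_k\,(2\xi_0^{\mathrm{b}}-\delta_k)}{c_1}\sim\frac{2\pi\sqrt{c_1-1}}{c_1}\,k\,\delta_k .
\end{equation*}
Tout revient donc à encadrer $\delta_k$.

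La deuxième étape localise $\xi_0(k,\mathcal{L}_k)$ au moyen de la relation de dispersion \eqref{dispersionguidee-1saut:1}. Pour le mode supérieur, $\tanh(\xi'_1(H-h_0))/\xi'_1\approx 1/\xi'_1$ est petit, donc $-\tan(\xi_0 h_0)/\xi_0$ doit l'être aussi tout en restant positif, ce qui force $\xi_0 h_0$ à se trouver juste au-dessus d'une asymptote de la tangente. Avec $h_0=\tfrac12$ ces asymptotes sont en $\xi_0=(2\ell-1)\pi$ ; un développement de $\tan$ près de l'asymptote donne $\tan(\xi_0/2)\approx -\xi_0/\xi'_1$, d'où $\xi_0(k,\mathcal{L}_k)=(2\mathcal{L}_k-1)\pi+O(1/\sqrt k)$, où $(2\mathcal{L}_k-1)$ est le plus grand entier impair strictement inférieur à $k\sqrt{c_1-1}$. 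On en tire
\begin{equation*}
\delta_k=\pi\bigl(k\sqrt{c_1-1}-(2\mathcal{L}_k-1)\bigr)+O(1/\sqrt k),
\end{equation*}
c'est-à-dire que $\delta_k/\pi$ mesure la distance de $k\sqrt{c_1-1}$ à l'entier impair qui le précède ; en particulier $0<\delta_k<2\pi+o(1)$ pour tout $k$.

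La borne universelle $\delta_k\leq 2\pi+o(1)$ donne $\xi'_1(k,\mathcal{L}_k)\leq C\sqrt k$ pour tout $k$ ; reportée dans le théorème \ref{theoreme-conc-conf-fonctionguidee-1saut:1bis} (premier cas si $\xi'_1\to\infty$, second cas sinon), elle fournit $R_{k,\mathcal{L}_k;\omega}\geq C'_1 e^{-C'_2\sqrt k}$, soit la minoration de l'item \ref{itemxi-1:6ter} puisque $\sqrt k\approxeq\lambda_{k,\mathcal{L}_k}^{1/4}$ (car $\lambda_{k,\mathcal{L}_k}\sim c_1 k^2\pi^2$). Pour les majorations il faut au contraire minorer $\delta_k$. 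Si $\sqrt{c_1-1}=m\in\mathbb{N}$ (item \ref{itemxi-1:0}), alors $k\sqrt{c_1-1}=km$ est entier et $\delta_k/\pi\in\{1,2\}$ suivant la parité de $km$ (à un $o(1)$ près), donc $\delta_k\asymp 1$, $\xi'_1\asymp\sqrt k$ et l'encadrement bilatéral s'ensuit pour tout $k$. Si $\sqrt{c_1-1}\in\mathbb{R}\setminus\mathbb{N}$ (item \ref{itemxi-1:6quarto}), l'équirépartition modulo $2$ de la suite $(k\sqrt{c_1-1})$ (théorème de Weyl, licite car $\sqrt{c_1-1}/2$ est irrationnel) permet d'extraire une sous-suite le long de laquelle $\delta_k$ reste dans un compact de $(0,2\pi)$ borné inférieurement ; sur cette sous-suite $\xi'_1\asymp\sqrt k$, et le théorème \ref{theoreme-conc-conf-fonctionguidee-1saut:1bis} donne $R\leq C_1 e^{-C_2\sqrt k}=C_1 e^{-C_2\lambda_{k,\mathcal{L}_k}^{1/4}}$.

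Le principal obstacle est l'étape de localisation : il faut justifier proprement que le mode supérieur se trouve à distance $O(1/\sqrt k)$ de l'asymptote et contrôler uniformément ce reste, en particulier dans les cas dégénérés où la barrière $k\pi\sqrt{c_1-1}$ coïncide presque avec une asymptote ou un zéro de la tangente (ils se produisent précisément lorsque $km$ est impair dans l'item \ref{itemxi-1:0}). C'est l'entrée arithmétique — rationalité de $\sqrt{c_1-1}$ contre équirépartition — qui sépare la minoration universelle (item \ref{itemxi-1:6ter}) des encadrements bilatéraux (items \ref{itemxi-1:0} et \ref{itemxi-1:6quarto}).
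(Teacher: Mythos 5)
Votre strat\'egie est en substance celle du papier (annexe \ref{annexe-comportement-asymptotique-fonctionpropreguidee1indications}) : ramener les trois items \`{a} un encadrement de $\xi'_{1}(k,\mathcal{L}_{k})$ que l'on injecte dans le th\'eor\`{e}me \ref{theoreme-conc-conf-fonctionguidee-1saut:1bis}, le contr\^{o}le de $\xi'_{1}$ se r\'eduisant \`{a} l'arithm\'etique de la position de $k\sqrt{c_{1}-1}$ par rapport aux entiers impairs ; votre $\delta_{k}$ reformule les trois Cas de cette annexe et l'identit\'e $c_{1}(\xi'_{1})^{2}=\delta_{k}(2\xi_{0}^{\mathrm{b}}-\delta_{k})$ est correcte. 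Mais il y a deux d\'efauts r\'eels. Le premier concerne l'item \ref{itemxi-1:6quarto} : son hypoth\`{e}se $\sqrt{c_{1}-1}\in\mathbb{R}\setminus\mathbb{N}$ inclut les rationnels non entiers, par exemple $\sqrt{c_{1}-1}=\frac{3}{2}$, pour lesquels $\sqrt{c_{1}-1}/2$ est rationnel, contrairement \`{a} ce que vous affirmez ; la suite $(k\sqrt{c_{1}-1} \bmod 2)_{k}$ est alors p\'eriodique et le th\'eor\`{e}me de Weyl ne s'applique pas. La conclusion subsiste, mais par p\'eriodicit\'e : si $\sqrt{c_{1}-1}=p/q$, prendre $k_{n}=nq$ de sorte que $k_{n}\sqrt{c_{1}-1}$ soit entier et $\delta_{k_{n}}$ de l'ordre de $\pi$ ou de $2\pi$ ; c'est exactement le point b- du lemme \ref{lemme-comportement-asymptotique-fonctionpropreguidee:4}. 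Telle quelle, votre preuve laisse ce sous-cas d\'ecouvert.

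Le second d\'efaut est plus s\'erieux : l'\'etape de localisation n'est pas seulement \og \`{a} justifier\fg{}, elle est fausse telle qu'\'enonc\'ee. Lorsque $k\sqrt{c_{1}-1}$ tombe juste au-dessus d'un entier impair (\`{a} distance d'ordre $1/k$), la branche de tangente issue de cette asymptote peut ne rencontrer la courbe en $\tanh$ qu'au-del\`{a} de la barri\`{e}re : c'est le Cas 3 de l'annexe C, o\`{u} le mode sup\'erieur se trouve sur la branche pr\'ec\'edente ; alors $2\mathcal{L}_{k}-1$ n'est pas le plus grand impair strictement inf\'erieur \`{a} $k\sqrt{c_{1}-1}$, $\delta_{k}$ vaut environ $2\pi$ de plus que votre formule ne l'indique, et votre borne \og $\delta_{k}<2\pi+o(1)$ pour tout $k$\fg{} peut atteindre $3\pi$. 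Vous situez d'ailleurs mal les cas d\'eg\'en\'er\'es : dans l'item \ref{itemxi-1:0}, $km$ impair donne une co\"{\i}ncidence exacte avec l'asymptote, que votre \og strictement inf\'erieur\fg{} traite correctement ; les quasi-co\"{\i}ncidences dangereuses ne peuvent survenir que si $\sqrt{c_{1}-1}\notin\mathbb{N}$, cas o\`{u} le papier lui-m\^{e}me ne peut exclure ni le Cas 3 ni un Cas 2 avec $\xi'_{1}$ born\'e (Remarque \ref{remarque-conc-conf-fonctionguidee-1saut:1'}). Vos conclusions survivent parce que l'erreur va dans le bon sens : le Cas 3 ne fait qu'augmenter $\delta_{k}$, de sorte que la minoration de l'item \ref{itemxi-1:6ter} tient d\`{e}s que l'on prouve que toute branche enti\`{e}rement situ\'ee sous la barri\`{e}re porte une valeur propre, et les sous-suites des items \ref{itemxi-1:0} et \ref{itemxi-1:6quarto} maintiennent $k\sqrt{c_{1}-1}$ \`{a} distance minor\'ee au-dessus de l'impair pr\'ec\'edent, ce qui force l'intersection \`{a} exister sur la branche vis\'ee. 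Mais cette discussion --- distinguer \og intersection sous la barri\`{e}re\fg{} de \og pas d'intersection\fg{} --- est pr\'ecis\'ement le contenu technique du lemme \ref{lemme-comportement-asymptotique-fonctionpropreguidee:3} et elle manque dans votre r\'edaction. Enfin, pour les minorations, le facteur $\sin^{2}(\xi_{0}h_{0})$ de \eqref{equation-conc-conf-fonctionpropreguidee-1saut:3}, que vous omettez, doit \^{e}tre minor\'e ; c'est vrai ici car $\cos(\xi_{0}h_{0})\to 0$ pour ces modes, mais cela se d\'eduit justement de la majoration $\xi'_{1}=O(\sqrt{k})$ et devrait \^{e}tre v\'erifi\'e.
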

%%%%%%%%%%%%%%%%%%%%%%%%%%%%%%%%%%%%
%%%%%%%%%%%%%%%%%%%%%%%%%%%%%%%%%%%%
\begin{maremarque}
\label{remarque-conc-conf-fonctionguidee-1saut:1'}
Si $\sqrt{c_{1}-1}\in\mathbb{N},$ il est certain que la courbe $\lambda= c_{1}\frac{k^{2}\pi^{2}}{L^{2}}$ d\'efinit la fronti\`{e}re entre les valeurs propres guid\'ees et celles qui ne le sont pas. C'est moins clair si $\sqrt{c_{1}-1}\not\in\mathbb{N}$ car on ne peut exclure que $\sup_{n}\xi'_{1}(k_{n},\ell_{n})<\infty.$
\end{maremarque}
\begin{moncor}
\label{corolllaire-conc-conf-fonctiongiuidee-1saut:1}
Pour la base $(v_{k,\ell})$ de notre mod\`{e}le, l'in\'egalit\'e \eqref{Laurent-Leautaud:1} est valable et l'exposant $\lambda_{k,\ell}^{\frac{1}{2}}$ ne peut \^{e}tre modifi\'e sauf pour certaines sous-familles et certains ouverts $\omega.$ 
\end{moncor}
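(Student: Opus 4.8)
Le plan est de rassembler les minorations d\'ej\`a obtenues dans cette section, puis d'exhiber une sous-famille o\`u l'exposant $\lambda^{1/2}$ est effectivement atteint. Commen\c{c}ons par \eqref{Laurent-Leautaud:1}. Pour les fonctions propres non guid\'ees, le th\'eor\`eme \ref{theoreme-conc-conf-fonctionnonguidee:1} donne la minoration uniforme $0<C_{\omega}\leq R_{k,\ell;\omega}$, qui entra\^{\i}ne trivialement \eqref{Laurent-Leautaud:1} pour n'importe quel exposant. Pour les fonctions propres guid\'ees, le cas d\'efavorable est $\omega\subset\Omega_{1}$ ; le th\'eor\`eme \ref{theoreme-conc-conf-fonctionguidee-1saut:1bis} fournit alors $R_{k,\ell;\omega}\sim \frac{{\rm vol}(\omega_{1})}{h_{0}}\,\frac{\sin^{2}(\xi_{0}h_{0})}{1-\frac{\sin(2\xi_{0}h_{0})}{2\xi_{0}h_{0}}}\,\frac{e^{-2\xi'_{1}(a-h_{0})}}{\xi'_{1}}$. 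La cl\'e est la majoration \'el\'ementaire $\xi'_{1}=(k^{2}\pi^{2}-\lambda_{k,\ell}/c_{1})^{1/2}\leq \sqrt{\frac{c_{1}-1}{c_{1}}}\,\lambda_{k,\ell}^{1/2}$, cons\'equence imm\'ediate de $k^{2}\pi^{2}<\lambda_{k,\ell}$ dans la corne guid\'ee : elle assure que le facteur exponentiel ne d\'ecro\^{\i}t jamais plus vite que $e^{-2(a-h_{0})\sqrt{(c_{1}-1)/c_{1}}\,\lambda_{k,\ell}^{1/2}}$, les facteurs polynomiaux restants \'etant absorb\'es en rempla\c{c}ant cet exposant par $c=2(a-h_{0})\sqrt{(c_{1}-1)/c_{1}}+\delta$, $\delta>0$ arbitraire. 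Pour $\omega\subset\Omega_{0}$, la relation \eqref{equation-conc-conf-fonctionpropreguidee-1saut:3''} donne une minoration uniforme, plus forte.

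Ensuite, l'optimalit\'e de $\lambda^{1/2}$. On fixe $\omega\subset\Omega_{1}$ et l'indice $\ell$ : le corollaire \ref{corollaire-conc-conf-fonctionguidee-1saut:2}, Cas 1, donne l'\'equivalent bilat\'eral $R_{k,\ell;\omega}\sim a_{1}\lambda_{k,\ell}^{-3/2}e^{-a_{2}\lambda_{k,\ell}^{1/2}}$ avec $a_{2}=2(a-h_{0})\sqrt{(c_{1}-1)/c_{1}}$. Si \eqref{Laurent-Leautaud:1} \'etait valable avec un exposant $\lambda^{\alpha}$, $\alpha<1/2$, on aurait $Ce^{-c\lambda^{\alpha}}\leq 2a_{1}\lambda^{-3/2}e^{-a_{2}\lambda^{1/2}}$ pour $\lambda$ grand, soit $e^{a_{2}\lambda^{1/2}-c\lambda^{\alpha}}\leq (2a_{1}/C)\lambda^{-3/2}$ ; le membre de gauche tend vers $+\infty$ puisque $a_{2}\lambda^{1/2}$ domine $c\lambda^{\alpha}$, d'o\`u une contradiction. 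Donc $\lambda^{1/2}$ ne peut \^etre diminu\'e pour la base enti\`ere. Les exceptions annonc\'ees se lisent dans les r\'esultats pr\'ec\'edents : pour la sous-famille $(v_{k,\mathcal{L}_{k}})_{k}$, la proposition \ref{proposition-conc-conf-fonctionguidee-1saut:1} montre une d\'ecroissance en $e^{-c\lambda_{k,\mathcal{L}_{k}}^{1/4}}$, donc l'exposant $1/4<1/2$ y convient ; et pour $\omega\subset\Omega_{0}$, la minoration uniforme autorise un exposant nul. Ce sont les \og certaines sous-familles\fg{} et \og certains ouverts\fg{} de l'\'enonc\'e.

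La difficult\'e principale sera de passer des estimations \'ecrites le long de suites ou \`a indice $\ell$ fix\'e \`a une minoration uniforme sur toute la base : il faut contr\^oler le pr\'efacteur $\sin^{2}(\xi_{0}h_{0})$ par en dessous par une puissance de $\lambda_{k,\ell}$ (ce qui suffit puisqu'une telle puissance est absorb\'ee par l'exponentielle). Ceci d\'ecoulera de la relation de dispersion \eqref{dispersionguidee-1saut:1}, qui donne $|\tan(\xi_{0}h_{0})|=\xi_{0}\tanh(\xi'_{1}(H-h_{0}))/\xi'_{1}\gtrsim \xi_{0}/\xi'_{1}$, combin\'ee \`a $\xi'_{1}\lesssim\lambda_{k,\ell}^{1/2}$ et \`a une minoration de $\xi_{0}$ branche par branche.
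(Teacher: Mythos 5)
Your proposal is correct and, in substance, it follows the paper's own route: the paper's entire proof is a single sentence pointing to Proposition \ref{proposition-conc-conf-fonctionguidee-1saut:1} with $\omega\subset\Omega_{1}$ (this exhibits the exceptional subfamilies where the exponent improves to $\lambda^{1/4}$), while the validity of \eqref{Laurent-Leautaud:1} and the optimality of the exponent $\lambda_{k,\ell}^{1/2}$ are left implicit in Theorem \ref{theoreme-conc-conf-fonctionguidee-1saut:1bis} and Corollary \ref{corollaire-conc-conf-fonctionguidee-1saut:2} --- precisely the results you assemble. Your contradiction argument for optimality (via the two-sided equivalence of Corollary \ref{corollaire-conc-conf-fonctionguidee-1saut:2}, Case 1, with $a>h_{0}$ so that $a_{2}>0$) and your identification of the exceptions (Proposition \ref{proposition-conc-conf-fonctionguidee-1saut:1}, and $\omega\subset\Omega_{0}$) match the paper's intent; your version has the merit of making the whole logical structure explicit.

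Two points in your write-up need repair, both minor and fixable with the paper's own results. First, for the non-guided eigenfunctions you invoke Theorem \ref{theoreme-conc-conf-fonctionnonguidee:1}, which requires $\lambda_{k,\ell}>(c_{1}+\varepsilon)\frac{k^{2}\pi^{2}}{L^{2}}$ and therefore leaves uncovered the infinitely many eigenvalues in the band $c_{1}\frac{k^{2}\pi^{2}}{L^{2}}<\lambda_{k,\ell}\leq(c_{1}+\varepsilon)\frac{k^{2}\pi^{2}}{L^{2}}$; for the one-jump model you should cite instead the non-guided case of Theorem \ref{theoreme-preliminaire:1}, valid for all $\lambda_{k,\ell}>c_{1}\frac{k^{2}\pi^{2}}{L^{2}}$. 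Second, your bound $\vert\tan(\xi_{0}h_{0})\vert\gtrsim\xi_{0}/\xi'_{1}$ tacitly uses $\tanh(\xi'_{1}(H-h_{0}))\gtrsim 1$, which fails if $\xi'_{1}\to 0$; write instead $\vert\tan(\xi_{0}h_{0})\vert=\xi_{0}(H-h_{0})\,\frac{\tanh(\xi'_{1}(H-h_{0}))}{\xi'_{1}(H-h_{0})}\geq\xi_{0}\min\left(C,\frac{c}{\xi'_{1}}\right)$, which, combined with the lower bound on $\xi_{0}$ from Lemma \ref{lemme-conc-conf-fonctionguidee-1saut:2} and $\xi'_{1}\lesssim\lambda_{k,\ell}^{1/2}$, still yields $\sin^{2}(\xi_{0}h_{0})\gtrsim\lambda_{k,\ell}^{-1}$, a polynomial loss absorbed into the exponential exactly as you indicate.
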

Ce corollaire est une cons\'equence imm\'ediate de la proposition \ref{proposition-conc-conf-fonctionguidee-1saut:1} : prendre une suite de fonctions propres satisfaisant cette proposition et $\omega\subset\Omega_{1}.$ Le r\'esultat qui suit est \`{a} la base du corollaire \ref{corollaire-conc-conf-fonctionguidee-1saut:2}.
%%%%%%%%%%%%%%%%%%%%%%%%%%%%%%%%%%%%
\begin{monlem}[Lemme pr\'eparatoire]
\label{lemme-conc-conf-fonctionguidee-1saut:2}
On consid\`{e}re une suite de fonctions propres guid\'ees $(v_{k_{n},\ell_{n}})_{n}$.
\begin{enumerate}
\item il n'est pas possible que $\xi_{0}(k_{n},\ell_{n}) \to 0.$
\item
\label{equationcomportementxi1item:3bis} 
 ($\xi'_{1}(k_{n},\ell_{n}) \to +\infty$ et $\xi_{0}$ born\'e) $\Longrightarrow(\sin(\xi_{0}h_{0})\to 0)\Longrightarrow (\xi'_{1}\to\infty).$ 

\item
\label{equationcomportementxi1item:3'} 
($\xi'_{1}$ born\'e) $\Longrightarrow \inf \vert\xi_{0}\cos(\xi_{0}h_{0})\vert>0,\sup \vert\xi_{0}\cos(\xi_{0}h_{0})\vert<\infty$ et $\vert\sin(\xi_{0}((k_{n},\ell_{n})h_{0})\vert\to 1.$
 \item
 \label{equationcomportementxi1item:2}
 Si $\ell_{n}=\ell$ fix\'e, on a $\lambda_{k_{n},\ell} - \lambda^{\ell} = o(1)$ o\`{u} $ \lambda^{\ell} =\lambda^{\ell}(k_{n}):= k_{n}^{2}\pi^{2} + \ell^{2}\frac{\pi^{2}}{h^{2}_{0}}, $ ce qui implique que $\xi_{0}(k_{n},\ell)\to \ell\frac{\pi}{h_{0}}$ et $\xi'_{1}(k_{n},\ell)\to\infty$ quand $n\to\infty.$\\
 On a plus pr\'ecis\'ement
\begin{eqnarray}
\label{equationcomportementxi1:1}
\sqrt{\frac{c_{1}-1}{c_{1}}}\lambda_{k_{n},\ell}^{\frac{1}{2}} - \xi'_{1}(k_{n},\ell) & = & \frac{\pi\ell^{2}}{2h_{0}^{2}}\sqrt{\frac{c_{1}}{c_{1}-1}}\frac{1}{k_{n}} + o(\frac{1}{k_{n}}),\\
\label{equationcomportementxi0:1}
\sin(\xi_{0}h_{0}) & = & -\frac{1}{\pi h_{0}}\sqrt{\frac{c_{1}}{c_{1}-1}}\frac{1}{k_{n}} + o(\frac{1}{k_{n}})
\end{eqnarray}
 \item Soit un entier $p\geq 2$ fix\'e. Si $\ell_{n}= \mathcal{L}_{k_{n}} -p$ alors $\xi'_{1}(k_{n},\ell_{n})\approxeq \lambda_{k_{n},\ell_{n}}^{\frac{1}{4}}.$
\end{enumerate}
\end{monlem}
\begin{proof} 
Les trois premiers r\'esultats sont des cons\'equences directes de la relation de dispersion \eqref{dispersionguidee-1saut:1}. D'abord, comme $\xi'_{1}(k_{n},\ell_{n}) \to \infty$ si $\xi_{0}(k_{n},\ell_{n}) \to 0,$ on aurait $0 = -h_{0},$ ce qui r\`{e}gle le premier point. La premi\`{e}re implication du point suivant est claire. Pour la seconde, supposant $\xi'_{1}$ born\'e,  il y a une contradiction connaissant le point pr\'ec\'edent. 
Dans le troisi\`{e}me alin\'ea, $\xi'_{1}$ born\'e ne peut arriver que si $\ell_{n}=\mathcal{L}_{k_{n}}$ pour les grandes valeurs de $n$. Ainsi $\xi_{0}\to\infty$ et, comme la $\liminf$ du c\^{o}t\'e droit de \eqref{dispersionguidee-1saut:1} doit \^{e}tre strictement positive il faut aussi que $\cos(\xi_{0}h_{0})\to 0$ et on conclut. Pour l'item \ref{equationcomportementxi1item:2}, voir l'annexe \ref{annexe-Lemme preparatoire:1}. Pour la derni\`{e}re affirmation on remarque que $c_{1}k^{2}_{n}\pi^{2} -\lambda^{\mathcal{L}_{k_{n}}-p}<c_{1}(\xi'_{1})^{2}<c_{1}k^{2}_{n}\pi^{2} -\lambda_{\mathcal{L}_{k_{n}}-p},$ i.e.
\begin{multline*} 
(\sqrt{c_{1}-1}k_{n} +2\mathcal{L}_{k_{n}} -2p)(\sqrt{c_{1}-1}k_{n} -2\mathcal{L}_{k_{n}} +2p)\pi^{2}<c_{1}(\xi'_{1})^{2}\\
 < (\sqrt{c_{1}-1}k_{n} +2\mathcal{L}_{k_{n}} -2p-1)(\sqrt{c_{1}-1}k_{n} -2\mathcal{L}_{k_{n}} +2p+1)\pi^{2}.
\end{multline*}
Pour \'evaluer $(\sqrt{c_{1}-1}k_{n} -2\mathcal{L}_{k} +2p)$ et $(\sqrt{c_{1}-1}k_{n} -2\mathcal{L}_{k_{n}} +2p+1)$ on utilise les trois cas de l'annexe D. Dans les cas 1 et 2 de cette annexe, on a $2p\leq \sqrt{c_{1}-1}k_{n} -2\mathcal{L}_{k_{n}} +2p<1+2p$ et, dans le dernier cas, on a $1 + 2p <\sqrt{c_{1}-1}k_{n} -2\mathcal{L}_{k_{n} }+2p<2 + 2p.$ Comme $k^{2}_{n}$ est comparable \`{a} $\lambda_{k_{n},\ell_{n}}$ (cf. la remarque C.1) et $\mathcal{L}_{k_{n}}$ \`{a} $k_{n}$ on peut conclure.
\end{proof}

%%%%%%%%%%%%%%%%%%%%%%%%%%%%%%%%%%%%%%%%%%%%%%%%%%%%
%%%%%%%%%%%%%%%%%%%%%%%%%%%%%%%%%%%%%%%%%%%%%%%%%%%% 
\subsection{Les preuves}
\label{soussection-conc-conf-fonctionguidee-1saut:2}
%%%%%%%%%%%%%%%%%%%%%%%%%%%%%%%
%%%%%%%%%%%%%%%%%%%%%%%%%%%%%%%
La preuve de la proposition \ref{proposition-conc-conf-fonctionguidee-1saut:1} suit de \eqref{equation-conc-conf-fonctionpropreguidee-1saut:3} et d'un 
partie technique d\'evelopp\'ee \`{a} l'annexe \ref{annexe-comportement-asymptotique-fonctionpropreguidee1indications}.
\subsubsection{D\'ecroissance dans $\omega\subset\Omega_{1}$ : \eqref{equation-conc-conf-fonctionpropreguidee-1saut:3} 
 et cas 1 du corollaire \ref{corollaire-conc-conf-fonctionguidee-1saut:2} }
Le terme $a_{k,\ell}\sqrt{2}\sin(k\pi x_{1})$ n'a qu'une influence marginale dans les r\'esultats qui nous int\'eressent. Rempla\c{c}ant par commodit\'e $x_{2}$ par $x$ puis, transformant  \eqref{equation-conc-conf-fonctionpropreguidee-1saut:1} \`{a} l'aide de \eqref{dispersionguidee-1saut:1}, on obtient
\begin{equation}
\label{equation- conc-conf-fonctionguidee-1saut-reformatage:1}
u_{k,\ell}(x) = -\frac{\xi_{0}}{\xi'_{1}}\cos(\xi_{0}h_{0})\frac{\sinh(\xi'_{1}(H-x))}{\cosh(\xi'_{1}(H-h_{0}))}, x\in (h_{0}, H),
\end{equation}
ce qui permet de r\'egler le cas o\`{u} $\xi'_{1}$ est born\'e et ne tend pas vers 0 gr\^{a}ce \`{a} l'alin\'ea \ref{equationcomportementxi1item:3'} du lemme \ref{lemme-conc-conf-fonctionguidee-1saut:2}. Si $\xi'_{1}$ s'approche vers 0,  un calcul direct montre que $R_{k_{n},\ell_{n};\omega}\approxeq {\rm vol}(\omega).$ Si $\xi'_{1}\to\infty$, on a
\begin{eqnarray}
\label{equation- conc-conf-fonctionguidee-1saut-reformatage:2}
\!\!\!\!\!\!\int_{h_{0}}^{H}\vert u(x)\vert^{2}{\rm d}x \!\!\!& = &\!\!\! \frac{\xi^{2}_{0}}{2(\xi'_{1})^{3}}\cos^{2}(\xi_{0}h_{0})\tanh(\xi'_{1}(H-h_{0}))( 1 +o(1) ),\\
\!\!\!\!\!\!\int_{a}^{b}\vert u(x)\vert^{2}{\rm d}x \!\!\!& = &\!\!\! \!\!\!\frac{\xi^{2}_{0}}{2(\xi'_{1})^{3}}\frac{\cos^{2}(\xi_{0}h_{0})}{\cosh^{2}(\xi'_{1}(H-h_{0}))}\sinh(\xi'_{1}(b-a))\!\cosh(2\xi'_{1}(H-\frac{a+b}{2}))(1\!+\!o(1)\!).
\end{eqnarray} 
Comme $\int_{0}^{h_{0}}\vert u(x)\vert^{2}{\rm d}x = \frac{h_{0}}{2}(1-\frac{\sin(2\xi_{0}h_{0})}{2\xi_{0}h_{0}})$, l'\'equivalence \eqref{equation-conc-conf-fonctionpropreguidee-1saut:3} s'en suit.
                                                       %%%%%%%%%%%%%%%%%%%%%%%%%%%
%%%%%%%%%%%%%%%%%%%%%%%%%%%%%%%
Avec l'hypoth\`{e}se $\ell_{n} = \ell$ fix\'e, l'\'equivalence \eqref{equation-conc-conf-fonctionpropreguidee-1saut:2} d\'ecoule du lemme \ref{lemme-conc-conf-fonctionguidee-1saut:2}, point 4, et du th\'eor\`{e}me \ref{theoreme-conc-conf-fonctionguidee-1saut:1bis}. De plus, \eqref{equation-conc-conf-fonctionpropreguidee-1saut:2} est plus pr\'ecise que l'estimation \eqref{equation-conc-conf-fonctionpropreguidee-1saut:3} puisque $\sin(\xi_{0}h_{0}) \to 0$ ( point 4 du lemme \ref{lemme-conc-conf-fonctionguidee-1saut:2}).\\
Noter que les quantit\'es $1-\frac{\sin(2\xi_{0}h_{0})}{2\xi_{0}h_{0}}$ ne tendent pas vers 0 dans \eqref{equation-conc-conf-fonctionpropreguidee-1saut:3} et que, si $\omega$ touche l'interface, l'exponentielle dispara\^{\i}t dans \eqref{equation-conc-conf-fonctionpropreguidee-1saut:2} et \eqref{equation-conc-conf-fonctionpropreguidee-1saut:3}.
%%%%%%%%%%%%%%%%%%%%%%%%%%%%%%%
\subsubsection{Concentration dans $\omega\subset\Omega_{0}$ : \eqref{equation-conc-conf-fonctionpropreguidee-1saut:3''}
 et cas 2  du corollaire \ref{corollaire-conc-conf-fonctionguidee-1saut:2}}
 Les r\'esultats annonc\'es suivront de la
 \begin{mapropo}
 Soit l'ouvert $\omega:= \omega_{1}\times(a,b)\subset\Omega_{0}$ et une suite de fonctions propres guid\'ees $(v_{k_{n},\ell_{n}})_{n\geq 1}.$ On a
%\begin{enumerate}
\begin{equation}
\label{equation-conc-conf-fonctionpropreguidee-1saut:6}
\!\mathbf{1}. \;(\xi'_{1}\to\infty)\Longrightarrow \!\!\left(\forall\varepsilon>0, \exists N, n\geq N, \frac{{\rm vol}(\omega)}{h_{0}}(1-\varepsilon)\leq \frac{\Vert v_{k_{n},\ell_{n}}\Vert^{2}_{L^{2}(\omega)}}{\Vert v_{k_{n},\ell_{n}}\Vert^{2}_{L^{2}(\Omega)}}\leq 
 \frac{5}{2} \frac{{\rm vol}(\omega)}{h_{0}}(1+\varepsilon)\!\right)\!.
\end{equation}
\!{\bf 2}. Pour la suite de fonctions propres guid\'ees associ\'ees aux $\lambda_{k,\mathcal{L}_{k}}$ , on a 
\begin{equation}
\label{equation-conc-conf-fonctionpropreguidee-1saut:7}
\forall\varepsilon>0, \exists N, \forall k>N\Longrightarrow  \frac{{\rm vol}(\omega)}{h_{0} + \frac{2}{3}(H-h_{0})}(1-\varepsilon) \leq\frac{\Vert v_{k,\mathcal{L}_{k}}\Vert^{2}_{L^{2}(\omega)}}{\Vert v_{k,\mathcal{L}_{k}}\Vert^{2}_{L^{2}(\Omega)}}\leq \frac{{\rm vol}(\omega)}{h_{0}}(1+\varepsilon).
\end{equation}
Si, de plus, $\xi'_{1}\to\infty$ pour une suite extraite, celle-ci v\'erifie
\begin{equation}
\label{equation-conc-conf-fonctionpropreguidee-1saut:8}
 \frac{\Vert v_{k,\mathcal{L}_{k}}\Vert^{2}_{L^{2}(\omega)}}{\Vert v_{k,\mathcal{L}_{k}}\Vert^{2}_{L^{2}(\Omega)}}\sim \frac{1}{h_{0}}{\rm vol}(\omega).
\end{equation}
%\end{enumerate}
\end{mapropo}
Comme $0\leq  a<b\leq h_{0}$, on part de
\begin{equation}
\label{equation-conc-conf-fonctionguidee-1saut:9}
R_{k_{n},\ell_{n},\omega}={\rm vol}(\omega)\frac{(1 + O(\frac{1}{k}))\left(1-\frac{\sin(\xi_{0}(b-a))}{\xi_{0}(b-a)}\cos(\xi_{0}(a+b)) \right)}{h_{0}(1-\frac{\sin(2\xi_{0}h_{0})}{2\xi_{0}h_{0}}) + \sin^{2}(\xi_{0}h_{0})\left( \frac{1}{\xi'_{1}\tanh(\xi'_{1}(H-h_{0}))} -\frac{H-h_{0}}{\sinh^{2}(\xi'_{1}(H-h_{0}))}\right)}.
\end{equation}
$\bullet$ L'encadrement \eqref{equation-conc-conf-fonctionpropreguidee-1saut:6} est une cons\'equence de  $\xi'_{1}\to\infty$ et $h_{0}(1-\frac{1}{2\pi})<h_{0}(1- \frac{\sin(2\xi_{0}h_{0}}{2\xi_{0}h_{0}})<h_{0}$ car $\liminf \xi_{0}h_{0}\geq 2\pi$ puisque $h_{0}\xi_{0}\geq h_{0} \xi_{0}(k_{n},1)\sim\pi$ d'apr\`{e}s l'item \ref{equationcomportementxi1item:2} du lemme \ref{lemme-conc-conf-fonctionguidee-1saut:2}.\\
$\bullet$ \eqref{equation-conc-conf-fonctionpropreguidee-1saut:8} vient de $ \frac{\Vert v_{k,\mathcal{L}_{k}}\Vert^{2}_{L^{2}(\omega)}}{\Vert v_{k,\mathcal{L}_{k}}\Vert^{2}_{L^{2}(\Omega)}}\sim \frac{1}{h_{0}}{\rm vol}(\omega)$ si $\xi'_{1}\to \infty,$ ce qui arrive lorsque $\sqrt{c_{1}-1}$ est un entier.  Pour d'autres conditions suffisantes voir aussi les lemmes \ref{lemme-comportement-asymptotique-fonctionpropreguidee:3} et \ref{lemme-comportement-asymptotique-fonctionpropreguidee:4}. Pour compl\'eter \eqref{equation-conc-conf-fonctionpropreguidee-1saut:7}, il faut \'etudier la possibilit\'e 
$\xi'_{1}\not\to\infty$ : utiliser l'in\'egalit\'e $0<\frac{1}{\sinh^{2}(\xi'_{1}(H-h_{0}))}(\frac{\sinh(2\xi'_{1}(H-h_{0}))}{2\xi'_{1}(H-h_{0})} -1)\leq \frac{2}{3}.$\\
$\bullet$ Pour \eqref{equation-conc-conf-fonctionpropreguidee-1saut:5}, on utilise $\xi_{0}\to\ell\frac{\pi}{h_{0}}$ (lemme \ref{lemme-conc-conf-fonctionguidee-1saut:2}, item \ref{equationcomportementxi1item:2} ) et $\xi'_{1}\to\infty$ qu'il suffit d'appliquer dans \eqref{equation-conc-conf-fonctionguidee-1saut:9}.
%%%%%%%%%%%%%%%%%%%%%%%%%%%%%%%
\begin{maremarque}
\label{remarque-conc-conf-fonctionguidee-1saut:0bis}
Si $\omega$ est une bande ouverte \`{a} cheval sur l'interface, i.e. $\omega:=(0,L)\times (h_{0}-\alpha_{-},h_{0}+\alpha_{+}), 0<\alpha_{-}<h_{0}, 0<\alpha_{+}<H-h_{0},$ et posant $\omega_{-}:= \omega\cap\Omega_{0}, \omega_{+}:= \omega\cap\Omega_{1},$ on peut \'evaluer le rapport $  \frac{\Vert v_{k_{n},\ell_{n}}^{2}\Vert_{L^{2}(\omega_{+})}^{2}}{\Vert v_{k_{n},\ell_{n}}\Vert^{2}_{L^{2}(\omega_{-})}}$ \`{a} partir du th\'eor\`{e}me \ref{theoreme-conc-conf-fonctionguidee-1saut:1bis}. On peut \^{e}tre plus pr\'ecis : 
 %et soit  $(v_{k_{n},\ell_{n}})_{n}$ une suite de fonctions propres guid\'ees.\\
si $\xi'_{1}\to\infty$
 il existe une constante positive $\bar{a} = \bar{a}(\alpha_{-} )<1$ telle que, pour tout $n,$ on a
\begin{equation}
\label{equation-concentration-interface-onde-guidee:2}
\frac{1}{\alpha_{-}(1 + \bar{a})}\frac{\sin^{2}(\xi_{0}h_{0})}{\xi'_{1}}(1 + o(1))\leq \frac{\Vert v_{k_{n},\ell_{n}}^{2}\Vert_{L^{2}(\omega_{+})}^{2}}{\Vert v_{k_{n},\ell_{n}}\Vert^{2}_{L^{2}(\omega_{-})}}\leq \frac{1}{\alpha_{-}(1-\bar{a})}\frac{\sin^{2}(\xi_{0}h_{0})}{\xi'_{1}}(1 + o(1))
\end{equation}
et, pour revenir \`{a} un $\omega = \omega_{1}\times(h_{0}-\alpha_{-},h_{0}+\alpha_{+})$ dans \eqref{equation-concentration-interface-onde-guidee:2}, il suffit de multiplier aux extr\'emit\'es gauche et  droite par le coefficient $C_{k;\omega}=2\int_{\omega_{1}} \sin^{2}(k\pi x_{1}){\rm d}x_{1}.$ 
\end{maremarque}
\noindent
Pour cette remarque, on part de $\frac{\int_{\omega_{+}}u_{k_{n},\ell_{n}}^{2}(x){\rm d}x}{\int_{\omega_{-}}u_{k_{n},\ell_{n}}^{2}(x){\rm d}x}   =\frac{\alpha_{+}}{\alpha_{-}}\frac{\sin^{2}(\xi_{0}h_{0})}{\sinh^{2}(\xi'_{1}(H-h_{0}))} \frac{\frac{\sinh(\xi'_{1}\alpha_{+})}{\xi'_{1}\alpha_{+}}\cosh(\xi'_{1}(2H-2h_{0}-\alpha_{+}))-1}{1-\frac{\sin(\xi_{0}\alpha_{-})}{\xi_{0}\alpha_{-}}\cos(\xi_{0}(2h_{0}-\alpha_{-}))}$ qui devient  
$ \frac{\sin^{2}(\xi_{0}h_{0})}{\xi'_{1}\alpha_{-}}\frac{1 + o(1)}{1-\frac{\sin(\xi_{0}\alpha_{-})}{\xi_{0}\alpha_{-}}\cos(\xi_{0}(2h_{0}-\alpha_{-}))}$ si $\xi'_{1}\to \infty.$ Comme $\xi_{0}(k_{n},\ell_{n})$ ne s'approche pas de 0 car $\xi_{0}(k_{n},\ell_{n})\geq \xi_{0}(k_{n},1)\to \frac{\pi}{h}$ d'apr\`{e}s le lemme \ref{lemme-conc-conf-fonctionguidee-1saut:2}, il existe une fonction d\'ecroissante $\alpha_{-}\to \bar{a}(\alpha_{-})\in (0,1)$
 telle que $\vert\frac{\sin(\xi_{0}\alpha_{-})}{\xi_{0}\alpha_{-}}\vert\leq \bar{a}(\alpha_{-})<1,$ d'o\`{u} l'encadrement \eqref{equation-concentration-interface-onde-guidee:2}. De plus, si $\ell_{n} = \ell$ fix\'e, on a $\sin^{2}(\xi_{0}h_{0})\sim \frac{c_{1}}{\pi^{2}h^{2}_{0}(c_{1}-1)}\frac{1}{k^{2}} $ venant de \eqref{equationcomportementxi0:1}.
L'\'epaisseur $\alpha_{+}$ est fix\'ee et n'intervient que dans $o(1)$. L'item \ref{equationcomportementxi1item:3bis} du lemme \ref{lemme-conc-conf-fonctionguidee-1saut:2} permet aussi de consid\'erer le cas o\`{u} la suite $(\ell_{n})_{n}$ est seulement born\'ee.
 %%%%%%%%%%%%%%%%%%%%%%%%%%%%%%%%%%%%%%%%%%%%%%%%%%%%
%%%%%%%%%%%%%%%%%%%%%%%%%%%%%%%%%%%%%%%%%%%%%%%%%%%%
\section{Concentration des ondes guid\'ees du mod\`{e}le \`{a} deux sauts}
\label{section-modele2sauts:1}
%%%%%%%%%%%%%%%%%%%%%%%%%%%%%%%%%%%%%%%%%%%%%%%%%%%%
%%%%%%%%%%%%%%%%%%%%%%%%%%%%%%%%%%%%%%%%%%%%%%%%%%%%
Nous continuons \`{a} remplacer $x_{2}$ par $x.$ L'\'equation \`{a} satisfaire dans $L^{2}(0,H)$ est
$
-(cu')' + (c\frac{k^{2}\pi^{2}}{L^{2}} -
\lambda)u,$ avec les conditions au bord $u(0) = u(H) =0,
$
et les notations suivantes :
\begin{equation}
\label{suite(stratification3valeurs)notations:2}
\begin{array}{c}
\!\!\!\!\!\!\!\!u_{0}(x)\!: =u(x) \mbox{ si } 0<x<h_{0}, u_{1}(x)\!: =u(x) \mbox{ si } h_{0}<x<h_{1}, u_{2}(x)\!: =u(x) \mbox{ si } h_{1}<x<H,\\
\xi_{0}^{2} := \frac{\lambda}{c_{0}} - \frac{k^{2}\pi^{2}}{L^{2}}, \xi_{1}^{2} := \frac{\lambda}{c_{1}} - \frac{k^{2}\pi^{2}}{L^{2}}, (\xi'_{1})^{2} :=  \frac{k^{2}\pi^{2}}{L^{2}} - \frac{\lambda}{c_{1}}, \xi_{2}^{2} := \frac{\lambda}{c_{2}} -\frac{k^{2}\pi^{2}}{L^{2}},  (\xi'_{2})^{2} :=  \frac{k^{2}\pi^{2}}{L^{2}} - \frac{\lambda}{c_{2}},
\end{array}
\end{equation}
\'etant entendu que les valeurs $\xi_{0}^{2}, \xi_{1}^{2}, \xi'_{1}$ et $\xi'_{2}$ seront toujours des nombres r\'eels positifs quand nous les utiliserons. Notant $u_{i}:=u_{\vert \Omega_{i}}$, on v\'erifie que
\begin{equation*}
\label{suite(stratification3valeurs)equation:2}
\begin{array}{l}
u_{0}(x) = a_{0}\sin(\xi_{0}x)\\
%u_{1} (x) = a_{1}\sin(\xi_{1}x) + b_{1}\cos(\xi_{1}x)\mbox{ si } \xi_{1}^{2}>0,\\
%u_{1} (x) = a'_{1}\sinh(\xi'_{1}x) + b'_{1}\cosh(\xi'_{1}x)\mbox{ si } \xi_{1}^{2}<0,\\
u_{1} (x) = \left\lbrace\begin{array}{l}
a_{1}\sin(\xi_{1}x) + b_{1}\cos(\xi_{1}x)\mbox{ si } \xi_{1}^{2}>0,\\
 a'_{1}\sinh(\xi'_{1}x) + b'_{1}\cosh(\xi'_{1}x)\mbox{ si } \xi_{1}^{2}<0,
\end{array} \right.\\
u_{2}(x) = \left\lbrace\begin{array}{l}
a_{2}\sinh(\xi'_{2}(H-x)) \mbox{ si } \xi_{2}^{2}<0,\\
 a_{2}\sin(\xi_{2}(H-x)) \mbox{ si } \xi_{2}^{2}>0,
\end{array}\right.
\end{array}
\end{equation*}
et on obtient trois relations de dispersion suivant la position de la valeur propre $\lambda$ (cf. figure \ref{fig:modele2sauts}) :
\begin{eqnarray}
\label{suite(stratification3valeurs)equation:4}
\mathbf{Zone(0)}\;&-\frac{\tan(\xi_{0}h_{0})}{c_{0}\xi_{0}} &=\frac{\frac{\tanh(\xi'_{1}(h_{1}-h_{0}))}{c_{1}\xi'_{1}}+\frac{\tanh(\xi'_{2}(H-h_{1}))}{c_{2}\xi'_{2}}}{1+ \frac{c_{1}\xi'_{1}}{c_{2}\xi'_{2}}\tanh(\xi'_{1}(h_{1}-h_{0}))\tanh(\xi'_{2}(H-h_{1}))}\mbox{ si } \xi_{1}^{2}<0,\\
\label{suite(stratification3valeurs)equation:3}
\mathbf{Zone(I)}\;&\frac{\tanh(\xi'_{2}(H-h_{1}))}{c_{2}\xi'_{2}} &= \frac{\frac{\tan(\xi_{0}h_{0})}{c_{0}\xi_{0}}+\frac{\tan(\xi_{1}(h_{1}-h_{0}))}{c_{1}\xi_{1}}}{\frac{c_{1}\xi_{1}}{c_{0}\xi_{0}}\tan(\xi_{0}h_{0})\tan(\xi_{1}(h_{1}-h_{0}))-1}\mbox{ si } \xi_{1}^{2}>0 \mbox{ et } (\xi'_{2})^{2}>0,\\
\label{suite(stratification3valeurs)equation:3bis}
\mathbf{Zone(II)}\;&\frac{\tan(\xi_{2}(H-h_{1}))}{c_{2}\xi_{2}} &= \frac{\frac{\tan(\xi_{0}h_{0})}{c_{0}\xi_{0}}+\frac{\tan(\xi_{1}(h_{1}-h_{0}))}{c_{1}\xi_{1}}}{\frac{c_{1}\xi_{1}}{c_{0}\xi_{0}}\tan(\xi_{0}h_{0})\tan(\xi_{1}(h_{1}-h_{0}))-1}\mbox{ si } \xi_{1}^{2}>0 \mbox{ et } \xi_{2}^{2}>0.\nonumber
\end{eqnarray}
%Si on consid\`{e}re, \`{a} $k$ fix\'e, la position des valeurs propres $\lambda_{k,\ell}$ par rapport aux valeurs limites $c_{0}\frac{k^{2}\pi^{2}}{L^{2}}, c_{1}\frac{k^{2}\pi^{2}}{L^{2}}$ et $c_{2}\frac{k^{2}\pi^{2}}{L^{2}}$, 
On constate que
\begin{itemize}
\item lorsque $c_{0}\frac{k^{2}\pi^{2}}{L^{2}}<\lambda_{k,\ell}<c_{1}\frac{k^{2}\pi^{2}}{L^{2}}$, la restriction \`{a} $\Omega_{0}$ de la solution $u_{k,\ell}$  arrive sur l'interface $S_{0}$ avec un angle sup\'erieur \`{a} l'angle limite de la r\'eflexion totale;
\item lorsque $c_{1}\frac{k^{2}\pi^{2}}{L^{2}}<\lambda_{k,\ell}<c_{2}\frac{k^{2}\pi^{2}}{L^{2}}$, la restriction \`{a} $\Omega_{0}\cup\Omega_{1}$ traverse $S_{0}$ et arrive sur l'interface $S_{1}$ avec un angle sup\'erieur \`{a} l'angle limite de la r\'eflexion totale;
\item lorsque $c_{2}\frac{k^{2}\pi^{2}}{L^{2}}<\lambda_{k,\ell}$, la solution $u_{k,\ell}$ est, dans chaque r\'egion $\Omega_{0},\Omega_{1}$ et $\Omega_{2},$ une combinaison lin\'eaire de sinus et cosinus sans r\'eflexion totale sur chacune des interfaces $S_{0}$ et $S_{1}.$ Cette situation est trait\'ee dans l'annexe \ref{annexe-theoreme-conf-conc-fonctionnonguidee:1}.
\end{itemize}
%%%%%%%%%%%%%}%%%%%%%%%%%%%%%%%%%%%%%%%%%%%%%%%%%%%%%
\subsection{Zone (0) : cas de la relation de dispersion \eqref{suite(stratification3valeurs)equation:4}}
\label{soussection-3couches-valeurspropresbasses}
Les valeurs propres $\lambda$ satisfont donc $c_{0}\frac{k^{2}\pi^{2}}{L^{2}} <\lambda< c_{1}\frac{k^{2}\pi^{2}}{L^{2}}.$ Cette infinit\'e forme une suite \`{a} deux indices  $\lambda_{k,\ell}, k=1,\cdots, 1\leq \ell\leq \mathcal{L}_{k}$, l'entier $\mathcal{L}_{k}$ \'etant une fonction monotone croissante de $k.$ Il existe $m>0$ minorant $\xi_{0}$, sinon on aurait $h_{0}/c_{0} = 0$ puisque $(\xi_{0}\to 0) \Longrightarrow ( \xi'_{1}\to \infty, \xi'_{2}\to \infty).$
\begin{mapropo}
\label{proposition-2saus-zone0-solutionprpre:1}
Posant $a_{0} = 1$, on a, \`{a} une constante de normalisation pr\`{e}s,
\begin{eqnarray}
\label{suite(stratification3valeurs)equation:5}
u_{0}(x)\!\!\!& = &\!\!\!\sin(\xi_{0}x)\\
\label{suite(stratification3valeurs)equation:6}
u_{1}(x)\!\!\!& = &\!\!\!\cosh(\xi'_{1}(x-h_{0}))\left\lbrack \sin(\xi_{0}h_{0}) + \frac{c_{0}\xi_{0}}{c_{1}\xi'_{1}}\cos(\xi_{0}h_{0})\tanh(\xi'_{1}(x-h_{0}))\right\rbrack\\
\label{suite(stratification3valeurs)equation:7}
u_{2}(x)\!\!\!& = &\!\!\!\frac{\cosh(\xi'_{1}(h_{1}-h_{0}))}{\sinh(\xi'_{2}(H-h_{1}))}\!\!\left\lbrack \sin(\xi_{0}h_{0}) + \frac{c_{0}\xi_{0}}{c_{1}\xi'_{1}}\cos(\xi_{0}h_{0})\tanh(\xi'_{1}(h_{1}-h_{0}))\!\right\rbrack\!\!\sinh(\xi'_{2}(\!H-x)\!)
\end{eqnarray}
\end{mapropo}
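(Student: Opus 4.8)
The plan is to solve the reduced equation $-(cu')' + (c\frac{k^{2}\pi^{2}}{L^{2}} - \lambda)u = 0$ slab by slab and then glue the three pieces with the interface conditions. On each of $\Omega_{0},\Omega_{1},\Omega_{2}$ the coefficient $c$ equals a constant $c_{i}$, so there the equation reads $u'' = (\frac{k^{2}\pi^{2}}{L^{2}} - \frac{\lambda}{c_{i}})u$. The hypothesis defining Zone (0), namely $c_{0}\frac{k^{2}\pi^{2}}{L^{2}} < \lambda < c_{1}\frac{k^{2}\pi^{2}}{L^{2}} < c_{2}\frac{k^{2}\pi^{2}}{L^{2}}$, fixes the sign of the right-hand side in each region: the characteristic coefficient is $-\xi_{0}^{2} < 0$ in $\Omega_{0}$ (oscillatory regime, fundamental solutions $\sin(\xi_{0}x),\cos(\xi_{0}x)$) and $+(\xi'_{1})^{2}>0$, $+(\xi'_{2})^{2}>0$ in $\Omega_{1}$ and $\Omega_{2}$ (hyperbolic regime, fundamental solutions built from $\sinh,\cosh$), with $\xi_{0},\xi'_{1},\xi'_{2}$ as in \eqref{suite(stratification3valeurs)notations:2}.

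First I would impose the two Dirichlet conditions. At $x=0$ the condition $u_{0}(0)=0$ removes the cosine, so $u_{0}(x)=a_{0}\sin(\xi_{0}x)$; normalizing with $a_{0}=1$ gives \eqref{suite(stratification3valeurs)equation:5}. Symmetrically, writing the solution on $\Omega_{2}$ in the basis $\{\sinh(\xi'_{2}(H-x)),\cosh(\xi'_{2}(H-x))\}$ shifted to $H$, the condition $u_{2}(H)=0$ kills the $\cosh$ term and leaves $u_{2}(x)=a_{2}\sinh(\xi'_{2}(H-x))$ for a constant $a_{2}$ still to be determined.

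Next I would transport the data across the interface $S_{0}$ at $x=h_{0}$. The interface conditions dictated by the operator are continuity of $u$ and continuity of the flux $cu'$. Writing $u_{1}(x)=A\cosh(\xi'_{1}(x-h_{0}))+B\sinh(\xi'_{1}(x-h_{0}))$ in the basis shifted to $h_{0}$, continuity of $u$ gives $A=u_{0}(h_{0})=\sin(\xi_{0}h_{0})$, while continuity of $cu'$ gives $c_{1}\xi'_{1}B=c_{0}u_{0}'(h_{0})=c_{0}\xi_{0}\cos(\xi_{0}h_{0})$, hence $B=\frac{c_{0}\xi_{0}}{c_{1}\xi'_{1}}\cos(\xi_{0}h_{0})$. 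Factoring $\cosh(\xi'_{1}(x-h_{0}))$ out of $A\cosh+B\sinh$ then yields exactly \eqref{suite(stratification3valeurs)equation:6}.

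Finally, continuity of $u$ at the interface $S_{1}$ (i.e.\ $x=h_{1}$) reads $a_{2}\sinh(\xi'_{2}(H-h_{1}))=u_{1}(h_{1})$; solving for $a_{2}$ and substituting the value of $u_{1}(h_{1})$ read off from \eqref{suite(stratification3valeurs)equation:6} produces \eqref{suite(stratification3valeurs)equation:7}. At this point every condition has been used except continuity of the flux $cu'$ at $h_{1}$; imposing it gives one scalar relation between $\xi_{0},\xi'_{1},\xi'_{2}$, which is precisely the dispersion relation \eqref{suite(stratification3valeurs)equation:4}. Since $\lambda$ is assumed to be a genuine eigenvalue in Zone (0), that last condition holds automatically, so it need not be imposed to obtain the form of the eigenfunction, and the three formulas hold up to the single global normalization constant. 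There is no real analytic obstacle here: the computation is a routine constant-coefficient ODE matching, and the only points requiring care are choosing the hyperbolic bases shifted to $h_{0}$ and to $H$ so that the algebra collapses to the stated factored form, and using the flux continuity $[cu']=0$ rather than $[u']=0$ at the interfaces.
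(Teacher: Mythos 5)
Your proposal is correct and follows exactly the route the paper takes implicitly: the paper states the proposition as a direct verification from the slab-by-slab general solutions, the Dirichlet conditions at $0$ and $H$, and the transmission conditions (continuity of $u$ and of the flux $cu'$, consistent with the divergence form $-(cu')'$ used in Section 3), with the leftover flux condition at $h_{1}$ being the dispersion relation \eqref{suite(stratification3valeurs)equation:4}. Your matching computation reproduces \eqref{suite(stratification3valeurs)equation:5}--\eqref{suite(stratification3valeurs)equation:7} exactly, including the correct use of $\lbrack cu'\rbrack=0$ rather than $\lbrack u'\rbrack=0$.
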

\begin{monlem}
\label{lemme-2sauts-zone(0)-estimations:1}
Si $c_{0}\frac{k^{2}\pi^{2}}{L^{2}} <\lambda< c_{1}\frac{k^{2}\pi^{2}}{L^{2}}$, la fonction propre associ\'ee v\'erifie
\begin{equation*}
\label{equation-2sauts-zone(0)-estimations:1}
\begin{array}{l}
\vert u_{1}(x)\vert \left\lbrace
\begin{array}{l}
 \leq 4\vert \cos(\xi_{0}h_{0})\vert \frac{c_{0}\xi_{0}}{c_{1}\xi'_{1}}e^{-\xi'_{1}(x-h_{0})}, \\
 \\
 \geq \left\lbrace\begin{array}{l} \frac{1}{8}\vert \cos(\xi_{0}h_{0})\vert \frac{c_{0}\xi_{0}}{c_{2}\xi'_{2}}\tanh(\xi'_{2}(H-h_{1}))e^{-\xi'_{1}(x-h_{0})},\\
 \frac{\vert \cos(\xi_{0}h_{0})\vert}{4(1+\sqrt{\frac{c_{1}(c_{1}-c_{0})}{{c_{2}(c_{2}-c_{0})}}})}\frac{c_{0}\xi_{0}}{c_{2}\xi'_{2}}e^{-\xi'_{1}(x-h_{0})}(1 - e^{-2\xi'_{1}(h_{1}-x)})
 \end{array}\right. \end{array}\right.h_{0}<x<h_{1},
 %\\
 \end{array}
 \end{equation*}
% \\
\begin{equation*}
 \!\!\!\!\!\!\!\!\!\!\!\!\!\!\!\!\vert u_{2}(x)\vert\left\lbrace
 \begin{array}{l}
 \leq 2 \vert\cos(\xi_{0}h_{0})\vert\frac{c_{0}\xi_{0}}{c_{2}\xi'_{2}}e^{-\xi'_{1}(h_{1}-h_{0})}\frac{\sinh(\xi'_{2}(H-x))}{\cosh(\xi'_{2}(H-h_{1}))},\\
 \\
 \geq \frac{1}{2}\vert\cos(\xi_{0}h_{0})\vert\frac{c_{0}\xi_{0}}{c_{2}\xi'_{2}}e^{-\xi'_{1}(h_{1}-h_{0})}\frac{\sinh(\xi'_{2}(H-x))}{\cosh(\xi'_{2}(H-h_{1}))},
 \end{array}\right. h_{1}<x<H.
% \end{array}
\end{equation*}
\end{monlem}
\begin{proof}
Pour $u_{1}$ on met \eqref{suite(stratification3valeurs)equation:6} sous la forme
\begin{equation}
\label{equation-2sauts-zone(0)-estimations:3}
u_{1}(x) = \cos(\xi_{0}h_{0})\frac{c_{0}\xi_{0}}{c_{1}\xi'_{1}}\cosh(\xi'_{1}(x-h_{0})) \left\lbrack\tanh(\xi'_{1}(x-h_{0})) + c_{1}\xi'_{1}\frac{\tan(\xi_{0}h_{0})}{c_{0}\xi_{0}}\right\rbrack.
\end{equation}
Appelant $M_{1}$ l'expression \`{a} l'int\'erieur du crochet et utilisant la relation de dispersion \eqref{suite(stratification3valeurs)equation:4}, on a
\begin{eqnarray}
\label{equation-2sauts-zone(0)-estimations:4}
M_{1}(x) & =& \tanh(\xi'_{1}(x-h_{0})) -\frac{\tanh(\xi'_{1}(h_{1}-h_{0}))+\frac{c_{1}\xi'_{1}}{c_{2}\xi'_{2}}\tanh(\xi'_{2}(H-h_{1}))}{1+ \frac{c_{1}\xi'_{1}}{c_{2}\xi'_{2}}\tanh(\xi'_{1}(h_{1}-h_{0}))\tanh(\xi'_{2}(H-h_{1}))}\nonumber\\
& = & \frac{\tanh(\xi'_{1}(x-h_{0}))-\tanh(\xi'_{1}(h_{1}-h_{0}))}{1+ \frac{c_{1}\xi'_{1}}{c_{2}\xi'_{2}}\tanh(\xi'_{1}(h_{1}-h_{0}))\tanh(\xi'_{2}(H-h_{1}))}\left(1 + \frac{c_{1}\xi'_{1}}{c_{2}\xi'_{2}}\frac{\tanh(\xi'_{2}(H-h_{1}))}{ \tanh(\xi'_{1}(h_{1}-x))}\right).
\end{eqnarray}
De $
\tanh(\xi'_{1}(x-h_{0}))-\tanh(\xi'_{1}(h_{1}-h_{0})) = -2 e^{-2\xi'_{1}(x-h_{0})}\frac{1 - e^{-2\xi'_{1}(h_{1}-x)}}{(1 + e^{-2\xi'_{1}(x-h_{0})})(1 + e^{-2\xi'_{1}(h_{1}-h_{0})})}$
on d\'eduit $\vert \tanh(\xi'_{1}(x-h_{0}))-\tanh(\xi'_{1}(h_{1}-h_{0})) \vert \leq 2 e^{-2\xi'_{1}(x-h_{0})}(1-e^{-2\xi'_{1}(h_{1}-x)}).$ Ce dernier facteur va servir \`{a} annihiler l'influence de $\tanh(\xi'_{1}(h_{1}-x))$ au second d\'enominateur de l'expression de $M_{1}$ puisque $(1-e^{-2\xi'_{1}(h_{1}-x)})(\tanh(\xi'_{1}(h_{1}-x)))^{-1}= 1 +  e^{-2\xi'_{1}(h_{1}-x)}.$ Pour finir la majoration, tenant compte de $(c_{1}\xi'_{1})/(c_{2}\xi'_{2}) <1,$ on obtient $\vert M_{1}(x)\vert \leq 4 e^{-2\xi'_{1}(x-h_{0})}.$ Ne prenant en compte que $\frac{c_{1}\xi'_{1}}{c_{2}\xi'_{2}}\frac{\tanh(\xi'_{2}(H-h_{1}))}{ \tanh(\xi'_{1}(h_{1}-x))}$ dans la parenth\`{e}se de \eqref{equation-2sauts-zone(0)-estimations:4}, on a la premi\`{e}re minoration : $\vert M_{1}(x)\vert\geq \frac{1}{4}\frac{c_{1}\xi'_{1}}{c_{2}\xi'_{2}}\tanh(\xi'_{2}(H-h_{1}))e^{-2\xi'_{1}(x-h_{0})}$ puisque $1-e^{-2y}\geq \tanh y.$ Avec le premier terme de cette parenth\`{e}se on a $\vert M_{1}(x) \vert > \frac{1}{m}e^{-2\xi'_{1}(x-h_{0})}(1 - e^{-2\xi'_{1}(h_{1}-x)})$ o\`{u} $m^{-1} = 2(1+\sqrt{\frac{c_{1}(c_{1}-c_{0})}{{c_{2}(c_{2}-c_{0})}}}).$
Pour $u_{2},$ utilisant encore \eqref{suite(stratification3valeurs)equation:4}, on met \eqref{suite(stratification3valeurs)equation:7} sous la forme
\begin{equation*}
\label{equation-2sauts-zone(0)-estimations:6}
u_{2}(x) = \cos(\xi_{0}h_{0})\frac{c_{0}\xi_{0}}{c_{1}\xi'_{1}}\frac{\cosh(\xi'_{1}(h_{1}-h_{0}))}{\sinh(\xi'_{2}(H-h_{1}))}M_{2}\sinh(\xi'_{2}(H-x))
\end{equation*}
avec $M_{2}:= -\frac{c_{1}\xi'_{1}}{c_{2}\xi'_{2}}\frac{\tanh(\xi'_{2}(H-h_{1}))}{\cosh^{2}(\xi'_{1}(h_{1}-h_{0}))}\frac{1}{1 + \frac{c_{1}\xi'_{1}}{c_{2}\xi'_{2}}\tanh(\xi'_{2}(H-h_{1}))\tanh(\xi'_{1}(h_{1}-h_{0}))}.$ Comme $\frac{c_{1}\xi'_{1}}{c_{2}\xi'_{2}}<1,$ il vient que 
\begin{equation*}
\label{equation-2sauts-zone(0)-estimations:7}
\frac{1}{2}\frac{c_{1}\xi'_{1}}{c_{2}\xi'_{2}} \frac{\tanh(\xi'_{2}(H-h_{1}))}{\cosh(\xi'_{1}(h_{1}-h_{0}))}e^{-\xi'_{1}(h_{1}-h_{0})}\leq \vert M_{2}\vert\leq  2\frac{c_{1}\xi'_{1}}{c_{2}\xi'_{2}}\frac{\tanh(\xi'_{2}(H-h_{1}))}{\cosh(\xi'_{1}(h_{1}-h_{0}))}e^{-\xi'_{1}(h_{1}-h_{0})}
\end{equation*}
d'o\`{u}
\begin{eqnarray*}
\label{equation-2sauts-zone(0)-estimations:8}
\vert u_{2}(x) \vert&\leq &  2\vert\cos(\xi_{0}h_{0})\vert\frac{c_{0}\xi_{0}}{c_{2}\xi'_{2}}e^{-\xi'_{1}(h_{1}-h_{0})}\frac{\sinh(\xi'_{2}(H-x))}{\cosh(\xi'_{2}(H-h_{1}))}\\
\vert u_{2}(x) \vert&\geq & \frac{1}{2}\vert\cos(\xi_{0}h_{0})\vert\frac{c_{0}\xi_{0}}{c_{2}\xi'_{2}}e^{-\xi'_{1}(h_{1}-h_{0})}\frac{\sinh(\xi'_{2}(H-x))}{\cosh(\xi'_{2}(H-h_{1}))}
\end{eqnarray*}
\end{proof}
Maintenant nous pouvons mesurer la masse des fonctions propres d'une suite $(v_{k_{n},\ell_{n}}),n\to\infty$, dans un ouvert $\omega$ et nous avons vu qu'il suffit de consid\'erer un ouvert de la forme $\omega:=(0,L)\times (a,b)\subset \Omega,$ ce qui revient \`{a} n\'egliger la largeur de l'ouvert $\omega.$

\begin{montheo}
\label{theoreme-2sauts-zone0-repartitionenergie:1}
Soit une suite $(v_{k_{n},\ell_{n}})_{n}$ de fonctions propres telle que $c_{0}\frac{k_{n}^{2}\pi^{2}}{L^{2}} <\lambda_{k_{n},\ell_{n}}< c_{1}\frac{k_{n}^{2}\pi^{2}}{L^{2}}$. On a forc\'ement $\xi'_{2}\to \infty$ et la suite v\'erifie \`{a} un coefficient de normalisation pr\`{e}s\\
$\bullet$\; Sur $\Omega_{0}$
\begin{eqnarray}
\label{suite(stratification3valeurs)equation:12}
\int_{\Omega_{0}} v_{k_{n},\ell_{n}}^{2}(x){\rm d}x \!\!\!\!& = \!\!\!\!& \frac{h_{0}}{2}\left(1 -\frac{\sin(2\xi_{0}h_{0})}{2\xi_{0}h_{0}}\right)
%\to  \frac{h_{0}}{2}\
\end{eqnarray}
%Noter que $\xi_{0}$ a une borne inf\'erieure strictement positive.\\
$\bullet$\; Sur $\Omega_{1}$
%\begin{equation}
%\label{equation-2sauts-zone(0)-estimations:8}
%(\xi'_{1}\to\infty)\Longrightarrow\int_{h_{0}}^{h_{1}}u_{k_{n},\ell_{n}}^{2}(x){\rm d}x \left\lbrace\begin{array}{l}\leq C_{2} \cos^{2}(\xi_{0}h_{0})\left(\frac{c_{0}\xi_{0}}{c_{1}\xi'_{1}}\right)^{2}\frac{1}{\xi'_{1}},\\
%\geq C_{1} \cos^{2}(\xi_{0}h_{0})\left(\frac{c_{0}\xi_{0}}{c_{2}\xi'_{2}}\right)^{2}\frac{1}{\xi'_{1}}.
%\end{array}
%\right.
%\end{equation}
%
\begin{eqnarray}
\label{equation-2sauts-zone(0)-estimations:8}
(\xi'_{1}\to\infty)&\Longrightarrow&\int_{\Omega_{1}}v_{k_{n},\ell_{n}}^{2}(x){\rm d}x \left\lbrace\begin{array}{l}\leq C_{2} \cos^{2}(\xi_{0}h_{0})\left(\frac{c_{0}\xi_{0}}{c_{1}\xi'_{1}}\right)^{2}\frac{1}{\xi'_{1}},\\
\geq C_{1} \cos^{2}(\xi_{0}h_{0})\left(\frac{c_{0}\xi_{0}}{c_{2}\xi'_{2}}\right)^{2}\frac{1}{\xi'_{1}}.
\end{array}
\right.\\
\label{equation-2sauts-zone(0)-estimations:8'}
(\sup \xi'_{1} < \infty)&\Longrightarrow&\left(\exists C_{1}, C_{2}>0 \mbox{ telles que } C_{1}<\int_{\Omega_{1}}v_{k_{n},\ell_{n}}^{2}(x){\rm d}x <C_{2}\right)
\end{eqnarray}
$\bullet$\; Sur $\Omega_{2}$
\begin{equation}
\label{equation-2sauts-zone(0)-estimations:9}
\int_{\Omega_{2}}v_{k_{n},\ell_{n}}^{2}(x){\rm d}x = \cos^{2}(\xi_{0}h_{0})\left(\frac{c_{0}\xi_{0}}{c_{2}\xi'_{2}}\right)^{2}e^{-2\xi'_{1}(h_{1}-h_{0})}\,O\!\left(\frac{1}{\xi'_{2}}\right)
\end{equation}
Dans le cas particulier o\`{u}, de plus, $\sup \xi'_{1}<\infty$, on a
\begin{equation}
\label{equation-2sauts-zone(0)-estimations:9'}
(\sup \xi'_{1} < \infty)\Longrightarrow \int_{\Omega_{2}}v_{k_{n},\ell_{n}}^{2}(x){\rm d}x = O\left( \frac{1}{(\xi'_{2})^{3}}\right)
\end{equation}
\end{montheo}
\begin{maremarque} Il y a une forte analogie avec la concentration d\'ecrite dans la section \ref{section-modeleunsaut:1}
%qui a lieu lorsque le coefficient de diffusion $c$  a un saut unique 
:
\begin{enumerate}
\item Les r\'esultats pr\'ec\'edents signifient que pour des valeurs propres de la zone {\bf{(0)}}, se rapprochant de la ligne $\lambda = c_{1}\frac{k^{2}\pi^{2}}{L^{2}},$ la norme des $v_{k_{n},\ell_{n}}$ se concentre de plus en plus dans $\Omega_{0}\cup\Omega_{1}.$
\item On peut prouver que $\vert\cos(\xi_{0}h_{0})\vert\to 1$ si $1\leq\ell_{n}\leq N$ o\`{u} $N$ est un entier fix\'e. La repr\'esentation de \eqref{suite(stratification3valeurs)equation:4} \'etant similaire \`{a} celle de la figure \ref{fig:repvp} on peut raisonner similairement.
\item Si $\frac{c_{0}\xi_{0}}{c_{1}\xi'_{1}}$ est born\'e dans \eqref{equation-2sauts-zone(0)-estimations:8}, il vient que la couche  $\Omega_{1}$ contribue de fa\c{c}on n\'egligeable dans $\Vert u_{k_{n},\ell_{n}}\Vert_{L^{2}(\Omega)}$ par rapport \`{a} la couche $\Omega_{0}$ lorsque $n\to\infty.$ C'est le cas si $c_{0}\frac{k_{n}^{2}\pi^{2}}{L^{2}} <\lambda_{k_{n},\ell_{n}}< (c_{1}-\varepsilon)\frac{k_{n}^{2}\pi^{2}}{L^{2}},\varepsilon>0.$ En effet,
 les rapports $\frac{c_{0}\xi_{0}}{c_{2}\xi'_{2}}$ et $\frac{c_{0}\xi_{0}}{c_{1}\xi'_{1}}$ sont alors comparables puisque cette condition implique $\sqrt{\frac{c_{1}\varepsilon}{c_{2}(c_{2}-c_{1} + \varepsilon)}}\leq\frac{c_{1}\xi'_{1}}{c_{2}\xi'_{2}} \leq \sqrt{\frac{c_{1}(c_{1}-c_{0})}{c_{2}(c_{2}-c_{0})}}$. Ce sera aussi le cas si $\ell_{n}<\mathcal{L}_{k}-1.$
Il en est de m\^{e}me pour la couche $\Omega_{2}$ par rapport \`{a} la couche $\Omega_{1}.$
\item Les rapports de d\'ecroissance pr\'ec\'edents seraient beaucoup plus importants, i.e. de type exponentiel, si on consid\'erait des ouverts $\omega$ telles que $\bar{\omega}\subset\Omega_{1}$ ou $\bar{\omega}\subset\Omega_{2}$.
\end{enumerate}
\end{maremarque}
\begin{proof} Examinons seulement les affirmations ne d\'ecoulant pas du lemme \ref{lemme-2sauts-zone(0)-estimations:1}. Si $\sup \xi'_{1} <M<\infty$, on a $\xi_{0}, \xi'_{2}\to \infty$ et $\frac{\tanh(M(h_{1}-h_{0}))}{c_{1} M}\leq \frac{\tanh(\xi'_{1}(h_{1}-h_{0}))}{c_{1} \xi'_{1}}\sim -\frac{\tan(\xi_{0}h_{0})}{c_{0}\xi_{0}}\leq \infty.$ Comme $\xi_{0} \to \infty,$ il faut que $\tan(\xi_{0}h_{0})\to\infty$, i.e. $\cos (\xi_{0}h_{0})\to 0$ d'o\`{u} $\vert\sin(\xi_{0}h_{0})\vert\to 1$ et il existe $C>0$ telle que $C< \vert\xi_{0}\cos(\xi_{0}h_{0})\vert <\frac{1}{C}$ que l'on utilisera dans une nouvelle formulation de \eqref{equation-2sauts-zone(0)-estimations:3} :
\begin{equation*}
\vert u_{1}(x) \vert= c_{0}\xi_{0}\vert \cos(\xi_{0}h_{0})\vert\cosh(\xi_{1}(x-h_{0}))\left\lbrack \frac{x-h_{0}}{c_{1}}\frac{\tanh(\xi'_{1}(x-h_{0}))}{\xi'_{1}(x-h_{0})} + \frac{h_{0}}{c_{0}}\frac{\tanh(\xi_{0}h_{0})}{\xi_{0}h_{0}}\right\rbrack.
\end{equation*}
Ainsi $c_{0}\xi_{0}\vert \cos(\xi_{0}h_{0})\vert \frac{x-h_{0}}{c_{1}}\frac{\tanh(\xi'_{1}(h_{1}-h_{0}))}{\xi'_{1}(h_{1}-h_{0})}\vert \leq \vert u_{1}(x)\vert \leq c_{0}\xi_{0}\vert \cos(\xi_{0}h_{0})\vert\cosh(\xi_{1}(h_{1}-h_{0}))\lbrack  \frac{h_{1}-h_{0}}{c_{1}} +  \frac{h_{0}}{c_{0}}\rbrack$, ce qui implique que la norme $L^{2}$ sur $\Omega_{1}$ est encadr\'ee par deux constantes non nulles. Pour \eqref{equation-2sauts-zone(0)-estimations:9'}, il suffit de noter que 
\begin{equation*}
\label{equation-2sauts-zone(0)-estimations:9''}
\int_{h_{1}}^{H}\sinh^{2}(\xi'_{2}(H-x)){\rm d}x = \frac{\sinh(2\xi'_{2}(H-h_{1}))}{4\xi'_{2}}\left\lbrack 1 -\frac{2\xi'_{2}(H-h_{1})}{\sinh(2\xi'_{2}(H-h_{1}))}\right\rbrack 
\end{equation*}
\end{proof}
%\noindent
%{\bf\color{red}$\clubsuit\clubsuit$ Faut-il r\'e\'ecrire ces r\'esultats en fonction des valeurs propres ? Je n'ai pas regard\'e ce que cela donnerait.}
%%%%%%%%%%%%%%%%%%%%%%%%%%%%%%%%%%%%%%%%%%%%%%%%%%%%
%%%%%%%%%%%%%%%%%%%%%%%%%%%%%%%%%%%%%%%%%%%%%%%%%%%%
\subsection{Zone (I) : cas de la relation de dispersion \eqref{suite(stratification3valeurs)equation:3}}
\label{soussection-2sauts-zone(I):1}
Puisque $ \xi_{1}^{2}>0$ on a
\begin{equation*}
\label{equation-2sauts-zone(I):1}
\begin{array}{l}
u_{0}(x) = a_{0}\sin(\xi_{0}x)\\
u_{1} (x) = a_{1}\sin(\xi_{1}x) + b_{1}\cos(\xi_{1}x),\\
u_{2}(x) = a_{2}\sinh(\xi'_{2}(H-x))
\end{array}
\end{equation*}
%{\bf Notation : } Si $(f_{n})$ et $(g_{n})$ sont deux suites de fonctions, la notation $f_{n}\approxeq g_{n}$ signifiera que ces quantit\'es sont comparables, i.e. qu'il existe des nombres r\'eels positifs $M_{1},M_{2}>0$ tels $M_{1}\leq \frac{f_{n}}{g_{n}}\leq M_{2}$ d\`{e}s que $n$ est assez grand.\\
Le th\'eor\`{e}me \ref{annexe-theoreme-2sauts-zone(I):1} n'est qu'une r\'eponse partielle \`{a} nos questions sur la concentration des fonctions propres. Avec une r\'esolution compl\`{e}te de la relation de dispersion \eqref{suite(stratification3valeurs)equation:3} nous pourrions utiliser toutes les propri\'et\'es des valeurs propres et agir comme dans la section \ref{soussection-3couches-valeurspropresbasses}.
\begin{montheo} 
\label{annexe-theoreme-2sauts-zone(I):1}
Soit une suite $(v_{k_{n},\ell_{n}})_{n}$ de fonctions propres telle que $c_{1}\frac{k_{n}^{2}\pi^{2}}{L^{2}} <\lambda_{k_{n},\ell_{n}}< c_{2}\frac{k_{n}^{2}\pi^{2}}{L^{2}}$. La suite v\'erifie \`{a} un coefficient de normalisation pr\`{e}s\\
%La suite $(\lambda_{k_{n},\ell_{n}})_{n}$ appartient \`{a} la zone (I) , i.e. $c_{1}\frac{k^{2}\pi^{2}}{L^{2}} < \lambda_{k_{n},\ell_{n}} < c_{2}\frac{k^{2}\pi^{2}}{L^{2}},$ et $(v_{k_{n},\ell_{n}})_{n}$ est une suite de fonctions propres associ\'ees.
\begin{enumerate}
\item Si  $(c_{1}+\varepsilon)\frac{k_{n}^{2}\pi^{2}}{L^{2}} < \lambda_{k_{n},\ell_{n}} < (c_{2}-\varepsilon)\frac{k_{n}^{2}\pi^{2}}{L^{2}}$ avec $0<\varepsilon<\frac{c_{2}-c_{1}}{2}$, les fonctions propres $(v_{k_{n},\ell_{n}})_{n}$ concentrent alors leurs normes $L^{2}$ dans $\Omega_{0}\cup\Omega_{1}$ : 
\begin{eqnarray}
\label{equation-2sauts-zone(I):2}
h_{1}\leq a<b<H & \Longrightarrow & \int_{(0,L)\times(a,b)}\vert v_{k_{n},\ell_{n}}(x)\vert^{2}{\rm d}x\approxeq\frac{e^{-2\xi'_{2}(a-h_{1})}}{\xi'_{2}}\int_{\Omega_{0}}\vert v_{k_{n},\ell_{n}}(x)\vert^{2}{\rm d}x\\
\label{equation-2sauts-zone(I):3}
\int_{\Omega_{0}}\vert v_{k_{n},\ell_{n}}(x)\vert^{2}{\rm d}x &\approxeq & \int_{\Omega_{1}}\vert v_{k_{n},\ell_{n}}(x)\vert^{2}{\rm d}x 
\end{eqnarray}
\item Soit $(\xi'_{2})_{n}$ la suite des quantit\'es  correspondant \`{a} la suite $(\lambda_{k_{n},\ell_{n}})_{n},$ alors 
\begin{equation}
\label{equation-2sauts-zone(I):4}
(\xi'_{2}\to 0)\Longrightarrow\int_{\Omega_{0}}\vert v_{k_{n},\ell_{n}}(x)\vert^{2}{\rm d}x \approxeq  \int_{\Omega_{1}}\vert v_{k_{n},\ell_{n}}(x)\vert^{2}{\rm d}x \approxeq \int_{\Omega_{2}}\vert v_{k_{n},\ell_{n}}(x)\vert^{2}{\rm d}x.
\end{equation}
Autrement dit, la norme $L^{2}$ des fonctions propres $(v_{k_{n},\ell_{n}})_{n}$ est r\'epartie de mani\`{e}re comparable sur les $\Omega_{i}, i=0,1,2.$
\item  Soit $(\xi_{1})_{n}$ la suite des quantit\'es  correspondant \`{a} la suite $(\lambda_{k_{n},\ell_{n}})_{n},$ alors 
\begin{eqnarray}
\label{equation-2sauts-zone(I):5}
(\xi_{1}\to 0)&\Longrightarrow& \left( h_{1}\leq a<b\leq H \Longrightarrow \int_{(0,L)\times(a,b)}\vert v_{k_{n},\ell_{n}}(x)\vert^{2}{\rm d}x \approxeq  \frac{e^{-2\xi'_{2}(a-h_{1})}}{\xi'_{2}}\right),\\
(\xi_{1}\to 0)&\Longrightarrow&\left(\frac{h_{0}}{2}\sim\int_{\Omega_{0}}\vert v_{k_{n},\ell_{n}}(x)\vert^{2}{\rm d}x  \gg  \int_{\Omega_{2}}\vert v_{k_{n},\ell_{n}}(x)\vert^{2}{\rm d}x \approxeq \frac{1}{\xi'_{2}}\right),
\end{eqnarray}
et la norme $L^{2}$ des fonctions propres $(v_{k_{n},\ell_{n}})_{n}$ se concentre dans $\Omega_{1}.$
\end{enumerate}
\begin{proof} Utiliser les trois cas particuliers de l'annexe \ref{annexe-detailcalculs-zone(1):1}.
\end{proof} 
\end{montheo}
Toutes les possibilit\'es n'ont pas \'et\'e examin\'ees. Par exemple, il se pourrait que la suite des valeurs propres v\'erifie $\alpha_{1}>\xi_{1}>\alpha_{2}>0$ sans que l'on soit dans la situation 1 du th\'eor\`{e}me  \ref{annexe-theoreme-2sauts-zone(I):1}.
%%%%%%%%%%%%%%%%%%%%%%%%%%%%%%%%%%%%%%%%%%%%%%%%%%%%
%%%%%%%%%%%%%%%%%%%%%%%%%%%%%%%%%%%%%%%%%%%%%%%%%%%%
%%%%%%%%%%%%%%%%%%%%%%%%%%%%%%%%%%%%%%%%%%%%%%%%%%%%
%%%%%%%%%%%%%%%%%%%%%%%%%%%%%%%%%%%%%%%%%%%%%%%%%%%%
\vskip.3cm
\noindent
{\bf Conclusion} Il s'agit d'une amorce et d'autres cas ponctuels ont \'et\'e obtenus mais beaucoup reste \`{a} faire dans cette voie, en particulier la g\'en\'eralisation \`{a} un coefficient de diffusion seulement continu. Cependant, les r\'esultats obtenus disent d\'ej\`{a} ce qu'il est raisonnable d'esp\'erer. Notons que les \'etudes tr\`{e}s pr\'ecises du cas $N=1$ (Section \ref{section-modeleunsaut:1}) et de la Section \ref{soussection-3couches-valeurspropresbasses} r\'esultent d'une utilisation presque explicite de la relation de dispersion, ce ne fut pas possible dans la Section \ref{soussection-2sauts-zone(I):1}. D'autre part, nous aurions pu aussi consid\'erer l'\'energie $\vert u_{k,\ell}\vert^{2} + \vert \nabla u_{k,\ell}\vert^{2}$ au lieu de $\vert u_{k,\ell}\vert^{2}$, un rapide coup d'oeil montre que dans la plupart des cas la valeur du rapport $R_{\omega}(v)$ dans \eqref{notation-conc-conf-generalites:1} change mais pas sa nature.
\vskip.5cm
%%%%%%%%%%%%%%%%%%%%%%%%%%%%%%%%%%%%%%%%%%%%%%%%%%%%
%%%%%%%%%%%%%%%%%%%%%%%%%%%%%%%%%%%%%%%%%%%%%%%%%%%%
\centerline{\bf\Large Annexes}
\appendix
\renewcommand{\theequation}{\thesection.\arabic{equation}}
%%%%%%%%%%%%%%%%%%%%%%%%%%%%%%%%%%%%%%%%%%%%%%%%%%%%
%%%%%%%%%%%%%%%%%%%%%%%%%%%%%%%%%%%%%%%%%%%%%%%%%%%%

%%%%%%%%%%%%%%%%%%%%%%%%%%%%%%%%%%%%%%%%%%%%%%%%%%%%
%%%%%%%%%%%%%%%%%%%%%%%%%%%%%%%%%%%%%%%%%%%%%%%%%%%%
\section{Preuve du th\'eor\`{e}me \ref{theoreme-conf-conc-nvelleappr:1}}
\setcounter{equation}{0}
\label{annexe-theoreme-conf-conc-nvelleappr:1}
%%%%%%%%%%%%%%%%%%%%%%%%%%%%%%%%%%%%%%%%%%%%%%%%%%%%
%%%%%%%%%%%%%%%%%%%%%%%%%%%%%%%%%%%%%%%%%%%%%%%%%%%%
 La preuve sera faite en plusieurs \'etapes.\\
 {\bf 1/ Mise en place des \'equations}\\
 %\begin{enumerate}
Soit une valeur propre $\lambda = \lambda_{k,\ell}$ que l'on fixe pour l'instant et la fonction propre $v_{k,\ell}$ associ\'ee (normalis\'ee avec un coefficient positif). Apr\`{e}s la transformation de Fourier en $x_{1}$, on travaille avec la fonction $u_{k,\ell}$ ne d\'ependant plus que de $x_{2}$ et qui v\'erifie, posant $\mu_{k}= \frac{k\pi}{L},$
\begin{equation*}
\label{equation-conf-conc-nvelleappr:2}
\left\lbrace 
\begin{array}{l}
u_{k,\ell}^{''}- \mu_{k}^{2}u_{k,\ell} = -\frac{\lambda}{c}u_{k,\ell}, (0,H)\setminus\{h_{0}\},\\
u_{k,\ell}(0) = u_{k,\ell}(H) = 0,\\
u_{k,\ell}(h_{0}^{+}) = u_{k,\ell}(h_{0}^{-}), u'_{k,\ell}(h_{0}^{+}) = u'_{k,\ell}(h_{0}^{-}) .
\end{array}
 \right.
 \end{equation*}
 %On suppose que $\lambda<c_{1}\mu_{k}^{2}$
%\end{enumerate}
On supprime les indices $(k,\ell)$ dans la suite et, comme $c = c_{1}$ sur $(h_{0}, H),$ on peut \'ecrire
\begin{equation*}
\left\lbrace \begin{array}{l}
u'' -(\mu^{2}-\frac{\lambda}{c_{1}})u = 0, x_{2}\in (h_{0},H),\\
u'' - (\mu^{2}-\frac{\lambda}{c_{1}})u =\lambda(\frac{1}{c_{1}} - \frac{1}{c})u, x_{2}\in (0, h_{0}),\\
u(0) = u(H) =0,\\
u(h_{0}^{+}) = u(h_{0}^{-}), u'(h_{0}^{+}) = u'(h_{0}^{-}),
\end{array}
 \right.
 \end{equation*}
 puis, notant $
% \begin{equation}
 \label{notation-conf-conc-nvelleappr:1}
f(x) =\left\lbrace 
\begin{array}{l}
 0, x_{2}\in (h_{0},H),\\
-\lambda(\frac{1}{c_{1}} - \frac{1}{c})u, x_{2}\in\rbrack 0,h_{0}\lbrack
\end{array}
 \right.
%\end{equation}
$,
 on doit r\'esoudre
 \begin{equation}
 \label{notation-conf-conc-nvelleappr:3}
\left\lbrace \begin{array}{l}
u'' -\xi^{'2}_{1}u = -f,\\
u(0) = u(H) =0,\\
u(h_{0}^{+}) = u(h_{0}^{-}), u'(h_{0}^{+}) = u'(h_{0}^{-}) ,
\end{array}
 \right.
 \end{equation}
 avec $\xi'_{1}:= \sqrt{\mu^{2}-\frac{\lambda}{c_{1}}}>0$. Le calcul de $u(h_{0})$ sera une fonction de $u_{\vert (0,h_{0})}$ puisque le second membre de l'\'equation sur l'intervalle $(h_{0},H)$ est nul.
% On \'ecrit
%  \begin{equation}
%  \label{equation-conf-conc-nvelleappr:4}
%\left\lbrace \begin{array}{l}
%u'' -\xi^{'2}u = 0, x_{2}\in (h_{0},H),\\
%u(H) =0,\\
%%u(h_{0}) = L (u^{-}).
%\end{array}
% \right.
% \end{equation}
%\vskip.2cm\noindent
L'id\'ee est  d'estimer $ \Vert u\Vert_{L^{2}(h_{0},H)}$ et $\Vert u\Vert_{L^{2}(0,h_{0})}$ en fonction de $\vert u(h_{0})\vert$ d'o\`{u} l'estimation de
%L'introduction d'une bonne fonction poids sur $(h_{0},H)$ (la fonction $\beta$ aurait son maximum sur $\omega= (\bar{h}, H)\subset(h_{0}, H)$) permettrait d'estimer sur $(\bar{h}, H)$
 $\frac{\Vert u\Vert_{L^{2}(h_{0},H)}}{\Vert u\Vert_{L^{2}(0,H)}}$.\\
 % en fonction de la distance \`{a} la courbe $\lambda = c_{1}\frac{k^{2}\pi^{2}}{L^{2}}$ qui correspond au changement de comportement du coefficient de discontinuit\'e.\\
% \vskip.2cm\noindent
 {\bf 2/ Un lemme pr\'eparatoire}
\begin{monlem}
\label{lemme-conf-conc-nvelleappr:1}
La solution du syst\`{e}me 
%eqref{notation-conf-conc-nvelleappr:1}-
\eqref{notation-conf-conc-nvelleappr:3} v\'erifie
\begin{equation*}
\label{equation-conf-conc-nvelleappr:5}
u(h_{0})= \frac{\lambda}{\xi'_{1}}\frac{\tanh(\xi'_{1}(H-h_{0}))}{\tanh(\xi'_{1}(H-h_{0}))+ \tanh(\xi'_{1}h_{0})}\int_{0}^{h_{0}} \frac{\sinh(\xi'_{1}\sigma)}{\cosh(\xi'_{1}h_{0})}\frac{c_{1} - c(\sigma)}{c_{1}c(\sigma)}u(\sigma){\rm d}\sigma.
\end{equation*}
\end{monlem}
\begin{proof} Puisque l'\'equation diff\'erentielle est \`{a} coefficient constant la solution s'\'ecrit
\begin{equation*}
u(x) = u(h_{0})\cosh(\xi'_{1}(x-h_{0}) + \frac{u'(h_{0})}{\xi'_{1}}\sinh(\xi'_{1}(x-h_{0}) -\frac{1}{\xi'_{1}}\int_{h_{0}}^{x}\sinh(\xi'_{1}(x-\sigma))f(\sigma){\rm d}\sigma,
\end{equation*}
%o\`{u}
% %l'indice 0 correspond \`{a} $x\in (0,h_{0})$ et l'indice 1 \`{a} $x\in (h_{0},H)$ avec $A_{0} = u(h_{0}^{-})=A_{1} = 
%$u(h_{0})=A$ et $B\xi'_{1} = u'(h_{0})$ 
%d'o\`{u}
%\begin{equation*}
%u(x)\!\!\!= u(h_{0})\!\cosh(\xi'_{1}(x-h_{0}))  -\frac{1}{\xi'_{1}}\int_{h_{0}}^{x}\!\!\!\sinh(\xi'_{1}(x-\sigma))f(\sigma){\rm d}\sigma+ \frac{u'(h_{0}^{+})}{\xi'_{1}}\sinh(\xi'_{1}(x-h_{0})) \!\left\lbrace\begin{array}{l}
% \!\frac{c_{1}}{c_{0}^{-}}, 0<x<h_{0},\\
%\! 1, h_{0}<x<H.
%\end{array}
%\right.
%\end{equation*}
Les conditions aux extr\'emit\'es donnant
\begin{eqnarray*}
0 & =&  u(h_{0})\!\cosh(\xi'_{1}h_{0})  -\frac{u'(h_{0})}{\xi'_{1}}\sinh(\xi'_{1}h_{0}) - \frac{1}{\xi'_{1}}\int^{h_{0}}_{0}\sinh(\xi'_{1}\sigma)f(\sigma){\rm d}\sigma\\
0 & =&  u(h_{0}) \cosh(\xi'_{1} (H-h_{0}))  +\frac{u'(h_{0})}{\xi'_{1}} \sinh(\xi'_{1} (H-h_{0})) - \frac{1}{\xi'_{1}}\int_{h_{0}}^{H}\sinh(\xi'_{1} (H-\sigma))f(\sigma){\rm d}\sigma,
\end{eqnarray*}
on arrive au syst\`{e}me matriciel
%\begin{eqnarray*}
%u(h_{0}) -\frac{\tanh(\xi'_{1}h_{0})}{\xi'_{1}}u'(h_{0}) & = &\frac{1}{\xi'_{1}}\int^{h_{0}}_{0}\frac{\sinh(\xi'_{1}\sigma)}{\cosh(\xi'_{1} h_{0})}f(\sigma){\rm d}\sigma\\
%u(h_{0}) +\frac{\tanh(\xi'_{1}(H-h_{0}))}{\xi'_{1}}u'(h_{0}) &=& \frac{1}{\xi'_{1}}\int_{h_{0}}^{H} \frac{\sinh(\xi'_{1}(H-\sigma))}{\cosh(\xi'_{1} (H-h_{0}))}f(\sigma){\rm d}\sigma
%\end{eqnarray*}
%qui s'\'ecrit
\begin{equation*}
A\left(\begin{array}{c}
u(h_{0})\\
u'(h_{0})
\end{array}\right) = \left(\begin{array}{c}
\frac{1}{\xi'_{1}}\int^{h_{0}}_{0}\frac{\sinh(\xi'_{1}\sigma)}{\cosh(\xi'_{1} h_{0})}f(\sigma){\rm d}\sigma\\
\frac{1}{\xi'_{1}}\int_{h_{0}}^{H} \frac{\sinh(\xi'_{1}(H-\sigma))}{\cosh(\xi'_{1} (H-h_{0}))}f(\sigma){\rm d}\sigma
\end{array}\right)
\mbox{ avec } A = \left(\begin{array}{cc}
1 & -\frac{\tanh(\xi'_{1}h_{0})}{\xi'_{1}}\\
1 & \frac{\tanh(\xi'_{1}(H-h_{0}))}{\xi'_{1}}
\end{array}\right)
\end{equation*}
et $\det(A) = \frac{\tanh(\xi'_{1}(H-h_{0}))+ \tanh(\xi'_{1}h_{0})}{\xi'_{1}}.$ Finalement, on obtient
\begin{equation*}
\left(\begin{array}{c}
u(h_{0})\\
u'(h_{0})
\end{array}\right)  = \frac{1}{\det A}\left( 
\begin{array}{cc}
\frac{\tanh(\xi'_{1}(H-h_{0}))}{\xi'_{1}} & \frac{\tanh(\xi'_{1}h_{0})}{\xi'_{1}}\\
-1 & 1
\end{array}\right)\left(\begin{array}{c}
\frac{1}{\xi'_{1}}\int^{h_{0}}_{0}\frac{\sinh(\xi'_{1}\sigma)}{\cosh(\xi'_{1} h_{0})}f(\sigma){\rm d}\sigma\\
\frac{1}{\xi'_{1}}\int_{h_{0}}^{H} \frac{\sinh(\xi'_{1}(H-\sigma))}{\cosh(\xi'_{1} (H-h_{0}))}f(\sigma){\rm d}\sigma
\end{array}\right),
\end{equation*}
ce qui donne pour notre fonction $f$ particuli\`{e}re, puisque $f = 0$ sur l'intervalle $(h_{0}, H),$ 
\begin{eqnarray}
\label{equation-conf-conc-nvelleappr:5'}
\!\!\!u(h_{0}) & = &\!\!\frac{1}{\tanh(\xi'_{1}(H-h_{0}))+\tanh(\xi'_{1}h_{0})}\frac{\tanh(\xi'_{1}(H-h_{0}))}{\xi'_{1}}\int^{h_{0}}_{0}\frac{\sinh(\xi'_{1}\sigma)}{\cosh(\xi'_{1} h_{0})}f(\sigma){\rm d}\sigma\\
\!\!\!u'(h_{0}) & = &\!\!-\frac{1}{\tanh(\xi'_{1}(H-h_{0}))+ \tanh(\xi'_{1}h_{0})}\int^{h_{0}}_{0}\frac{\sinh(\xi'_{1}\sigma)}{\cosh(\xi'_{1} h_{0})}f(\sigma){\rm d}\sigma.\nonumber
\end{eqnarray}
%et, maintenant, appliquant ce qui pr\'ec\`{e}de \`{a} notre fonction $f$ particuli\`{e}re puisque $f = 0$ sur l'intervalle $(h_{0}, H)$ on a
%\begin{eqnarray*}
%\label{equation-conf-conc-nvelleappr:6}
%u(h_{0}) & = &\frac{\lambda}{\xi'_{1}}\frac{c_{0}^{-}\tanh(\xi'_{1}(H-h_{0}))}{c_{0}^{-}\tanh(\xi'_{1}(H-h_{0}))+ c_{1}\tanh(\xi'_{1}h_{0})}\int^{h_{0}}_{0}\frac{\sinh(\xi'_{1}\sigma)}{\cosh(\xi'_{1} h_{0})}\frac{c_{1}- c(\sigma)}{c(\sigma)c_{1}}u(\sigma){\rm d}\sigma\nonumber
%% & = & -\frac{c_{1}-c_{0}}{c_{1}}\frac{\lambda}{\xi'_{1}}\frac{\tanh(\xi'_{1}(H-h_{0}))}{c_{0}\tanh(\xi'_{1}(H-h_{0}))+ c_{1}\tanh(\xi'_{1}h_{0})}\int^{h_{0}}_{0}\frac{\sinh(\xi'_{1}(h_{0}-\sigma))}{\cosh(\xi'_{1} h_{0})}u(\sigma){\rm d}\sigma
%\end{eqnarray*}
\end{proof}
\noindent
{\bf 3/ Majorer $\vert u(h_{0})\vert$ par $\Vert u\Vert_{L^{2}(0,h_{0})}$}\\
On d\'eduit de $\frac{\sinh(\xi'_{1}\sigma)}{\cosh(\xi'_{1} h_{0})} = e^{-\xi'_{1}(h_{0}-\sigma)}\frac{1-e^{-2\xi'_{1}\sigma}}{1+ e^{-2\xi'_{1}h_{0}}}\leq e^{-\xi'_{1}(h_{0}-\sigma)}$ les estimations suivantes
\begin{eqnarray*}
\left\vert\int^{h_{0}}_{0}\frac{\sinh(\xi'_{1}\sigma)}{\cosh(\xi'_{1} h_{0})}\frac{c_{1}- c(\sigma)}{c(\sigma)c_{1}}u(\sigma){\rm d}\sigma\right\vert^{2} & \leq & \max(\frac{c-c_{1}}{c c_{1}})^{2}\int_{0}^{h_{0}}e^{-2\xi'_{1}(h_{0}-\sigma)}{\rm d}\sigma\int_{0}^{h_{0}}\vert u(\sigma)\vert^{2}{\rm d}\sigma\\
& \leq & \max(\frac{c-c_{1}}{c c_{1}})^{2}\frac{1}{2\xi'_{1}}\Vert u\Vert^{2}_{L^{2}(0,h_{0})},
\end{eqnarray*}
ce qui donne, en injectant cette in\'egalit\'e dans \eqref{equation-conf-conc-nvelleappr:5},
\begin{equation*}
\label{equation-conf-conc-nvelleappr:7}
\vert u(h_{0})\vert^{2}\leq \frac{1}{2} \max(\frac{c-c_{1}}{c c_{1}})^{2}\frac{\lambda^{2}}{(\xi'_{1})^{3}}\Vert u\Vert^{2}_{L^{2}(0,h_{0})},
\end{equation*}
que l'on peut aussi \'ecrire
\begin{equation*}
\label{equation-conf-conc-nvelleappr:10}
\Vert u\Vert^{2}_{L^{2}(0,h_{0})}
 %& \geq & \vert u(h_{0}\vert^{2} \left(\max\frac{c_{1}-c}{c_{1}c}\right)^{-2}2\frac{(\xi'_{1})^{3}}{\lambda^{2}}\nonumber\\
%& 
\geq  K \frac{(\xi'_{1})^{4}}{\lambda^{2}}\frac{1}{\xi'_{1}}\vert u(h_{0}\vert^{2} \mbox{ avec } K = 2(\max\frac{c_{1}-c}{c_{1}c})^{-2}.
\end{equation*}
{\bf 4/ \'Evaluer $\Vert u\Vert_{L^{2}(h_{0},H)}$ et $\Vert u\Vert_{L^{2}(a,b)}$ en fonction de $\vert u(h_{0})\vert$ pour $h_{0}\leq a<b\leq H.$}\\
Comme $c$ est une fonction constante sur l'intervalle $(h_{0}, H),$ on voit d'une part que 
\begin{eqnarray*}
\label{equation-conf-conc-nvelleappr:8}
\int_{h_{0}}^{H}\vert u\vert^{2}{\rm d}\sigma & = &\frac{H-h_{0}}{2}\left( \frac{u(h_{0})}{\sinh(\xi'_{1}(H-h_{0}))}\right)^{2}\left( \frac{\sinh(\xi'_{1}(H-h_{0}))}{\xi'_{1}(H-h_{0})}\cosh(\xi'_{1}(H-h_{0})) -1\right)\nonumber\\
& = &\frac{\vert u(h_{0})\vert^{2}}{4\xi'_{1}}\frac{\sinh(2\xi'_{1}(H-h_{0}))}{\sinh^{2}(\xi'_{1}(H-h_{0}))}\left( 1-\frac{2\xi'_{1}(H-h_{0})}{\sinh(2\xi'_{1}(H-h_{0}))}\right)\nonumber\\
& \sim &\frac{\vert u(h_{0})\vert^{2}}{2\xi'_{1}} \mbox{ si } \xi'_{1}\to\infty,
\end{eqnarray*}
et, d'autre part, que
\begin{eqnarray*}
\label{equation-conf-conc-nvelleappr:9}
\int_{a}^{b}\vert u\vert^{2}{\rm d}\sigma \!\!\!\!\!\!& = \!\!\!&\frac{b-a}{2}\left( \frac{u(h_{0})}{\sinh(\xi'_{1}(H-h_{0}))}\right)^{2}\left( \frac{\sinh(\xi'_{1}(b-a))}{\xi'_{1}(b-a)}\cosh(2\xi'_{1}(H-\frac{a+b}{2})) -1\right)\nonumber\\
\!\!\!\!\!\!& = &\!\!\!\!\!\frac{\vert u(h_{0})\vert^{2}}{\sinh^{2}(\xi'_{1}(H-h_{0}))}\frac{\sinh(\xi'_{1}(b-a))}{2\xi'_{1}}\cosh(2\xi'_{1}(H-\frac{a+b}{2}))\nonumber\\
&&\qquad\qquad\qquad\qquad\qquad\qquad\times\left( 1- \frac{\xi'_{1}(b-a)}{\sinh(\xi'_{1}(b-a))\cosh(2\xi'_{1}(H-\frac{a+b}{2}))} \right)\nonumber\\
&\sim & \frac{\vert u(h_{0})\vert^{2}}{\xi'_{1}}e^{-2\xi'_{1}(a-h_{0})}\mbox{ si } \xi'_{1}\to\infty.
\end{eqnarray*}
%{\bf 6/ \'Evaluer $\Vert u\Vert_{L^{2}(0,h_{0})}$ en fonction de $\vert u(h_{0})\vert$}\\  
%Il vient de \eqref{equation-conf-conc-nvelleappr:5} que
%\begin{eqnarray}
%\label{equation-conf-conc-nvelleappr:10}
%\Vert u\Vert^{2}_{L^{2}(0,h_{0})} & \geq & \vert u(h_{0}\vert^{2} \left(\max\frac{c_{1}-c}{c_{1}c}\right)^{-2}2\frac{(\xi'_{1})^{3}}{\lambda^{2}}\nonumber\\
%& \geq & K \frac{(\xi'_{1})^{4}}{\lambda^{2}}\frac{1}{\xi'_{1}}\vert u(h_{0}\vert^{2} \mbox{ avec } K = 2(\max\frac{c_{1}-c}{c_{1}c})^{-2}.
%\end{eqnarray}
{\bf Conclusion}\\
Si $\underline{c}\frac{k^{2}\pi^{2}}{L^{2}}<\lambda< (c_{1}-\varepsilon)\frac{k^{2}\pi^{2}}{L^{2}},$ alors on est certain que $
%\begin{equation}
%\label{equation-conf-conc-nvelleappr:11}
\frac{\varepsilon}{c_{1}(c_{1}-\varepsilon)}<\frac{(\xi'_{1})^{2}}{\lambda}<\frac{c_{1}-\underline{c}}{c_{1}\underline{c}}
%\end{equation}
$
et, de plus, $\xi'_{1}\to\infty,$ ce qui permet d'en d\'eduire le th\'eor\`{e}me \ref{theoreme-conf-conc-nvelleappr:1} puisque
\begin{equation*}
\frac{\Vert v_{k_{n},\ell_{n}}\Vert^{2}_{L^{2}(\omega)}}{\Vert v_{k_{n},\ell_{n}}\Vert^{2}_{L^{2}(\Omega)}} \leq \frac{\Vert v_{k_{n},\ell_{n}}\Vert^{2}_{L^{2}(\omega)}}{\Vert v_{k_{n},\ell_{n}}\Vert^{2}_{L^{2}(x_{2}<h_{0})} + \Vert v_{k_{n},\ell_{n}}\Vert^{2}_{L^{2}(x_{2}>h_{0})}}\leq \frac{\frac{ \vert v_{k_{n},\ell_{n}}(h_{0})\vert^{2}}{\xi'_{1}}e^{-2\xi'_{1}(a-h_{0})}}{ K \frac{(\xi'_{1})^{4}}{\lambda^{2}}\frac{1}{\xi'_{1}}\vert v_{k_{n},\ell_{n}}(h_{0})\vert^{2}+ \frac{ \vert v_{k_{n},\ell_{n}}(h_{0})\vert^{2}}{2\xi'_{1}}}(1 + o(1))
\end{equation*}
Si $\alpha >0$ et si on impose que $H-a>\alpha$ alors la constante $K_{\varepsilon, c}$ de l'\'enonc\'e est ind\'ependante de $a.$
%%%%%%%%%%%%%%%%%%%%%%%%%%%%%%%%%%%%%%%%%%%%%%%%%%%%
%%%%%%%%%%%%%%%%%%%%%%%%%%%%%%%%%%%%%%%%%%%%%%%%%%%%
%%%%%%%%%%%%%%%%%%%%%%%%%%%%%%%%%%%%%%%%%%%%%%%%%%%%
%%%%%%%%%%%%%%%%%%%%%%%%%%%%%%%%%%%%%%%%%%%%%%%%%%%%
\section{Preuve du th\'eor\`{e}me \ref{theoreme-conc-conf-fonctionnonguidee:1}}
\setcounter{equation}{0}
\label{annexe-theoreme-conf-conc-fonctionnonguidee:1}
%%%%%%%%%%%%%%%%%%%%%%%%%%%%%%%%%%%%%%%%%%%%%%%%%%%%
%%%%%%%%%%%%%%%%%%%%%%%%%%%%%%%%%%%%%%%%%%%%%%%%%%%%
\begin{maremarque}
\label{remarque-conc-conf-fonctionnonguidee:1}
\begin{enumerate}
\item Pour le mod\`{e}le \`{a} $N$ sauts, le r\'esultat appara\^{\i}t plausible \`{a} premi\`{e}re vue car les solutions sont des combinaisons de sinus et cosinus. La difficult\'e vient des coefficients de ces combinaisons qu'il convient de comparer entre eux.
\item Lorsque $c\in C^{1}(0,H)$ mais non \`{a} $C^{1}(\lbrack 0,H)\rbrack)$, les translations doivent respecter $0<\alpha<a<b<H-\alpha<H$ avec $\alpha$ fix\'e. Pour le mod\`{e}le \`{a} un saut, on peut poser $\varepsilon =0$ mais une \'etude fine de la relation de dispersion est n\'ecessaire.
\item Lorsque $c\in C^{1}(\lbrack 0,H\rbrack)$, la transformation de Liouville nous ram\`{e}ne \`{a} la r\'esolution de $F'' + QF = 0 $ avec $ Q\geq 0.$ Ensuite, la transformation de Pr\"{u}fer modifi\'ee donne un syst\`{e}me \'equivalent form\'e de deux \'equations diff\'erentielles du premier ordre.
\end{enumerate}
\end{maremarque}
\noindent
Lorsque $\varepsilon>0,$ la preuve  ne n\'ecessite pas la connaissance de la relation de dispersion, celle dont les racines sont les valeurs propres de l'op\'erateur $A.$  
Cherchant les fonctions propres $v$ de l'op\'erateur $A = -\nabla\cdot(c\nabla)$ ayant la forme $v(x) = \sin(\frac{k\pi}{L}x_{1})u(x_{2})$ et associ\'ees \`{a} la valeur propre $\lambda,$ nous introduisons dans $L^{2}(0,H)$ la famille d'op\'erateurs r\'eduits autoadjoints $(A_{k})$ d\'efinis formellement par $A_{k}u= -(cu')' +c\frac{k^{2}\pi^{2}}{L^{2}}u, u(0) = u(H) = 0,$ et nous sommes amen\'es \`{a} r\'esoudre l'\'equation
\begin{equation}
\label{equation-con-conf-fonctionnonguidee-C1:1}
(cu')' + (\lambda-c\frac{k^{2}\pi^{2}}{L^{2}})u = 0, u(0) = u(H) = 0.
\end{equation}
Pour $u$ dans le domaine de l'op\'erateur r\'eduit  $A_{k}$, on a $\int_{0}^{H}(cu'^{2} + (c\frac{k^{2}\pi^{2}}{L^{2}}-\lambda)u^{2}){\rm d}x = 0,$ ce qui implique que le spectre $\sigma(A_{k})$ v\'erifie $\sigma(A_{k})\subset ((\inf c) \frac{k^{2}\pi^{2}}{L^{2}}, +\infty).$ 
\subsection{Mod\`{e}le \`{a} $N$ sauts}
\label{soussection-con-conf-fonctionnonguidee-1saut:1}
%%%%%%%%%%%%%%%%%%%%%%%%%%%%%%%%%%%%%%%%%%%%%%%%%%%%
Les hypoth\`{e}ses $\mathbf{(H0)}$ et $\mathbf{(H1)}$ sont  satisfaites dans cette sous-section. Nous nous int\'eressons aux solutions $u$ de \eqref{equation-con-conf-fonctionnonguidee-C1:1} pour les valeurs propres $\lambda>c_{H}\frac{k^{2}\pi^{2}}{L^{2}}$ (ici $c_{H}=c_{N}$).
%\begin{equation}
%\label{equation-conc-conf-fonctionnonguidee-Nsauts:1}
%-(cu')' + (c\frac{k^{2}\pi^{2}}{L^{2}} -
%\lambda)u = 0,\quad u(0) = u(H) =0.
%\end{equation}
Si on pose $\xi_{i}:= (\frac{\lambda}{c_{i}}-\frac{k^{2}\pi^{2}}{L^{2}})^{\frac{1}{2}}$, la fonction propre $u$ associ\'ee \`{a} la valeur propre $\lambda = \lambda_{k,\ell}$ (on remplace $x_{2}$ par $x$) s'\'ecrit
\begin{eqnarray*}
\label{equation-conc-conf-fonctionnonguidee-Nsauts:2}
u_{0}(x) & = & a_{0}\sin(\xi_{0}x), 0<x<h_{0},\nonumber\\
u_{1}(x) & = & a_{1}\sin(\xi_{1}x) + b_{1}\cos(\xi_{1}x) , h_{0}<x<h_{1}, \nonumber\\
\vdots& = &\vdots\nonumber\\
u_{i}(x) & = & a_{i}\sin(\xi_{i}x) + b_{i}\cos(\xi_{i}x) , h_{i-1}<x<h_{i}, \nonumber\\
\vdots& = &\vdots\nonumber\\
u_{N-1}(x) & = & a_{N-1}\sin(\xi_{N-1}x) + b_{N-1}\cos(\xi_{N-1}x) , h_{N-2}<x<h_{N-1}, \nonumber\\
u_{N}(x) & = & a_{N}\sin(\xi_{N}x) + b_{N}\cos(\xi_{N}x) , h_{N-1}<x<h_{N}=H, \nonumber\\
& = & \alpha \sin(\xi_{N}(H-x)), h_{N-1}<x<h_{N}=H.
\end{eqnarray*}
Quelque soit la fonction propre consid\'er\'ee, nous choisissons le m\^{e}me $a_{0}$, par exemple 1, mais, par contre, les $(a_{i}, b_{i}), i = 1,\ldots,N,$ d\'ependent de $(k,\ell).$
\`{A} l'interface $S_{i}, i = 0,\cdots, N-1$, les conditions de transmission s'\'ecrivent
\begin{eqnarray*}
\label{equation-conc-conf-fonctionnonguidee-Nsauts:3}
a_{i}\sin(\xi_{i}h_{i}) + b_{i}\cos(\xi_{i}h_{i}) & = &a_{i+1}\sin(\xi_{i+1}h_{i}) + b_{i+1}\cos(\xi_{i+1}h_{i})\nonumber\\
c_{i}a_{i}\xi_{i}\cos(\xi_{i}h_{i}) - c_{i}b_{i}\xi_{i}\sin(\xi_{i}h_{i}) & = &c_{i+1} a_{i+1}\xi_{i+1}\cos(\xi_{i+1}h_{i}) - c_{i+1}b_{i+1}\xi_{i+1}\sin(\xi_{i+1}h_{i})
\end{eqnarray*}
d'o\`{u} l'\'ecriture matricielle $S_{i}\left(\begin{array}{c}a_{i}\\ b_{i}\end{array}\right) = T_{i}\left(\begin{array}{c}a_{i+1}\\ b_{i+1}\end{array}\right) $ avec $\det S_{i} = -c_{i}\xi_{i}, \det T_{i} = -c_{i+1}\xi_{i+1}$ et 
\begin{equation*}
\label{equation-conc-conf-fonctionnonguidee-Nsauts:4}
\!\!\!\!\begin{array}{ll}
T_{i}= \!\!\left(
\begin{array}{cc}
\sin(\xi_{i+1}h_{i}) & \cos(\xi_{i+1}h_{i})\\
c_{i+1}\xi_{i+1}\cos (\xi_{i+1}h_{i}) & -c_{i+1}\xi_{i+1}\sin(\xi_{i+1}h_{i})
\end{array}
\!\!\right)\!, &
\!\!\!T_{i}^{-1}= \!\!\left(
\begin{array}{cc}
\sin(\xi_{i+1}h_{i}) & \frac{\cos(\xi_{i+1}h_{i})}{c_{i+1}\xi_{i+1}}\\
\cos (\xi_{i+1}h_{i}) & -\frac{\sin(\xi_{i+1}h_{i})}{c_{i+1}\xi_{i+1}}
\end{array}
\!\!\right)\!, 
\\
\\
S_{i}= \!\!\left(
\begin{array}{cc}
\sin(\xi_{i}h_{i}) & \cos(\xi_{i}h_{i})\\
c_{i}\xi_{i}\cos (\xi_{i}h_{i}) & -c_{i}\xi_{i}\sin(\xi_{i}h_{i})
\end{array}
\!\!\!\right)\,,&
\!\!\!S_{i}^{-1}= \left(
\begin{array}{cc}
\sin(\xi_{i}h_{i}) & \frac{\cos(\xi_{i}h_{i})}{c_{i}\xi_{i}}\\
\cos (\xi_{i}h_{i}) & - \frac{\sin(\xi_{i}h_{i})}{c_{i}\xi_{i}}
\end{array}
\right). 
\end{array}
\end{equation*}
Pour $i=0,1,2,\ldots,N-1,$ on v\'erifie que
\!\!\!\begin{eqnarray*}
%%%%%%%%%%%%%%%%%%%%%%%%%%%%%%%%
\label{equation-conc-conf-fonctionnonguidee-Nsauts:6}
\left(\!\!\begin{array}{c}
a_{i+1}\\
-b_{i+1}
\end{array}\!\!\right) \!\!& = &\!\!\! \left(
\begin{array}{cc}
\sin(\xi_{i+1}h_{i}) &  \frac{c_{i}\xi_{i}}{c_{i+1}\xi_{i+1}}\cos(\xi_{i+1}h_{i})\\
-\cos(\xi_{i+1}h_{i}) &  \frac{c_{i}\xi_{i}}{c_{i+1}\xi_{i+1}}\sin(\xi_{i+1}h_{i})
\end{array}\right)
\!\left(
\begin{array}{cc}
\sin(\xi_{i}h_{i}) & \cos (\xi_{i}h_{i})\\
\cos(\xi_{i}h_{i}) & -\sin(\xi_{i}h_{i})
\end{array}\right)
\!\!\left(\begin{array}{c}
a_{i}\\
b_{i}
\end{array}\!\!\!\right)\\
%%%%%%%%%%%%%%%%%%%%%%%%%%%%%%%%
\label{equation-conc-conf-fonctionnonguidee-Nsauts:7}
\left(\begin{array}{c}
a_{i}\\
-b_{i}
\end{array}\right) \!\!& = &\!\!\!  \left(
\begin{array}{cc}
\sin(\xi_{i}h_{i}) &  \frac{c_{i+1}\xi_{i+1}}{c_{i}\xi_{i}}\cos(\xi_{i}h_{i})\\
-\cos(\xi_{i}h_{i}) &  \frac{c_{i+1}\xi_{i+1}}{c_{i}\xi_{i}}\sin(\xi_{i}h_{i})
\end{array}\right)
\!\!\left(
\begin{array}{cc}
\sin(\xi_{i+1}h_{i}) & \cos (\xi_{i+1}h_{i})\\
\cos(\xi_{i+1}h_{i}) & -\sin(\xi_{i+1}h_{i})
\end{array}\!\right)
\!\!\left(\!\!\begin{array}{c}
a_{i+1}\\
b_{i+1}
\end{array}\!\!\!\!\right)
\end{eqnarray*}
%%%%%%%%%%%%%%%%%%%%%%%%%%%%%%%%
\begin{monlem}
\label{lemme-conc-conf-fonctionnonguidee-Nsauts:1}
Les normes euclidiennes dans $\mathbb{R}^{2}$ des op\'erateurs lin\'eaires associ\'es aux matrices
\begin{equation*}
\label{equation-conc-conf-fonctionnonguidee-Nsauts:8}
\!\!\!\!\!B_{i+1,i}:=  \!\left(\!\!\!
\begin{array}{cc}
\sin(\xi_{i}h_{i}) &  \frac{c_{i+1}\xi_{i+1}}{c_{i}\xi_{i}}\cos(\xi_{i}h_{i})\\
-\cos(\xi_{i}h_{i}) &  \frac{c_{i+1}\xi_{i+1}}{c_{i}\xi_{i}}\sin(\xi_{i}h_{i})
\end{array}\!\!\right), 
B_{i,i+1}:=\! \left(\!\!\!
\begin{array}{cc}
\sin(\xi_{i+1}h_{i}) &  \frac{c_{i}\xi_{i}}{c_{i+1}\xi_{i+1}}\cos(\xi_{i+1}h_{i})\\
-\cos(\xi_{i+1}h_{i}) &  \frac{c_{i}\xi_{i}}{c_{i+1}\xi_{i+1}}\sin(\xi_{i+1}h_{i})
\end{array}\!\!\right)
\end{equation*}
pour $ i=0,\ldots, N-1,$ v\'erifient
\begin{equation}
\label{equation-conc-conf-fonctionnonguidee-Nsauts:9}
\Vert B_{i+1,i}\Vert= \max\left(1, \frac{c_{i+1}\xi_{i+1}}{c_{i}\xi_{i}}\right), \Vert B_{i,i+1}\Vert = \max\left(1,\frac{c_{i}\xi_{i}}{c_{i+1}\xi_{i+1}}\right).
\end{equation}
\end{monlem}
\begin{proof} Avec  $v=(v_{1}, v_{2})$ on a $\Vert B_{i+1,i}v\Vert^{2} = v_{1}^{2} + (\frac{c_{i+1}\xi_{i+1}}{c_{i+1}\xi_{i}})^{2}v_{2}^{2}\leq\max\left(1, (\frac{c_{i+1}\xi_{i+1}}{c_{i}\xi_{i}})^{2}\right)(v_{1}^{2} + v_{2}^{2}).$
% si on pose d'o\`{u} $\Vert B_{i+1,i}v\Vert\leq \max\left(1, \frac{c_{i+1}\xi_{i+1}}{c_{i}\xi_{i}}\right).$ 
 Pour l'\'egalit\'e, on prend $v=(1,0)$ si $\frac{c_{i+1}\xi_{i+1}}{c_{i}\xi_{i}}\leq 1$ ou $v=(0,1)$ si $\frac{c_{i+1}\xi_{i+1}}{c_{i}\xi_{i}}\geq 1.$
\end{proof}
%%%%%%%%%%%%%%%%%%%%%%%%%%%%%%%%
\begin{monlem}
\label{lemme-conc-conf-fonctionnonguidee-Nsauts:2}
Si $\lambda_{k,\ell}>c_{N}\frac{k^{2}\pi^{2}}{L^{2}}$ ($c_{H}=c_{N}$), on a
\begin{eqnarray}
\label{equation-conc-conf-fonctionnonguidee-Nsauts:10}
\sqrt{\frac{c_{i}}{c_{i+1}}} & < & \frac{c_{i}\xi_{i}}{c_{i+1}\xi_{i+1}}, i = 0,\ldots,N-1,\nonumber\\
\label{equation-conc-conf-fonctionnonguidee-Nsauts:10bis}
\frac{c_{i}\xi_{i}}{c_{i+1}\xi_{i+1}}& < & \sqrt{\frac{c_{i}(c_{N}-c_{i})}{c_{i+1}(c_{N}-c_{i+1})}}, i = 0,\ldots,N-2.\\
\label{equation-conc-conf-fonctionnonguidee-Nsauts:10ter}
\Vert B_{i+1,i}\Vert & \leq & \sqrt{\frac{c_{i+1}}{c_{i}}}, i = 0, N-1.
\end{eqnarray}
\end{monlem}
\begin{proof} Pour la premi\`{e}re in\'egalit\'e, il suffit de remarquer que $(\frac{c_{i}\xi_{i}}{c_{i+1}\xi_{i+1}})^{2} = \frac{c_{i}}{c_{i+1}}\frac{\lambda-c_{i}\frac{k^{2}\pi^{2}}{L^{2}}}{\lambda-c_{i+1}\frac{k^{2}\pi^{2}}{L^{2}}}$ o\`{u} le deuxi\`{e}me facteur est toujours plus grand que 1. La seconde in\'egalit\'e se v\'erifie par un calcul simple.
%Comme l'hyperbole d\'ecrite par l'application $\lambda\to\frac{\lambda-c_{i}\frac{k^{2}\pi^{2}}{L^{2}}}{\lambda-c_{i+1}\frac{k^{2}\pi^{2}}{L^{2}}}$ est d\'ecroissante sur l'intervalle consid\'er\'e, on en d\'eduit la seconde in\'egalit\'e. 
La derni\`{e}re in\'egalit\'e est une cons\'equence de la premi\`{e}re in\'egalit\'e et de \eqref{equation-conc-conf-fonctionnonguidee-Nsauts:9}.
\end{proof} 
%%%%%%%%%%%%%%%%%%%%%%%%%%%%%%%%%%%%%
\begin{monlem}
\label{lemme-conc-conf-fonctionnonguidee-Nsauts:3} Si $\lambda_{k,\ell}>c_{N}\frac{k^{2}\pi^{2}}{L^{2}},$ on a\\
\begin{equation*}
\vert a_{0}\vert\leq \sqrt{\frac{c_{i}}{c_{0}}}\;\left\Vert\left(\begin{array}{c}
a_{i}\\
b_{i}
\end{array}\right)\right\Vert, i = 0,\ldots,N,
\end{equation*}
et il existe une constante $M>0$ telle que
\begin{equation}
\label{equation-conc-conf-fonctionnonguidee-Nsauts:11}
\left\Vert \left(\begin{array}{c}
a_{i}\\
b_{i}
\end{array}\right)\right\Vert\leq M \vert a_{0}\vert 
, i = 0,\ldots, N-1.
\end{equation}
\end{monlem}
\begin{proof}  $\!\!\left(\!\begin{array}{cc}
\sin(\xi_{i+1}h_{i}) & \cos (\xi_{i+1}h_{i})\\
\cos(\xi_{i+1}h_{i}) & -\sin(\xi_{i+1}h_{i})
\end{array}\!\right)$ \'etant unitaire, on a $\vert a_{0}\vert\leq \Vert B_{1,0}\Vert\cdots \Vert B_{i,i-1}\Vert\; \left\Vert \left(\begin{array}{c}
a_{i}\\
b_{i}
\end{array}\right)\right\Vert$ d'o\`{u} $\vert a_{0}\vert\leq \sqrt{\frac{c_{i}}{c_{0}}}\;\left\Vert\left(\begin{array}{c}
a_{i}\\
b_{i}
\end{array}\right)\right\Vert, i = 0,\ldots,N$ en utilisant \eqref{equation-conc-conf-fonctionnonguidee-Nsauts:10ter}. La matrice $\left(
\begin{array}{cc}
\sin(\xi_{i}h_{i}) & \cos (\xi_{i}h_{i})\\
\cos(\xi_{i}h_{i}) & -\sin(\xi_{i}h_{i})
\end{array}\right)$ \'etant aussi unitaire, on a $\Vert \left(\begin{array}{c}
a_{i}\\
b_{i}
\end{array}\right)\Vert\leq \Vert B_{i-1,i}\Vert\cdots \Vert B_{0,1}\Vert \;\vert a_{0}\vert$ ce qui permet de dire, utilisant \eqref{equation-conc-conf-fonctionnonguidee-Nsauts:10bis},  que les vecteurs $\left(\begin{array}{c}
a_{i}\\
b_{i}
\end{array}\right)$ sont born\'es, pour $i=0,\ldots, N-1,$ par $M\vert a_{0}\vert$ o\`{u} $M$ est une constante 
ne d\'ependant pas de la valeur propre $\lambda_{k,\ell}>c_{N}\frac{k^{2}\pi^{2}}{L^{2}}.$ 
\end{proof}
On voudrait que \eqref{equation-conc-conf-fonctionnonguidee-Nsauts:11} reste vraie pour $i =N$ mais $\Vert B_{N-1,N}\Vert= \max(1,\frac{c_{N-1}\xi_{N-1}}{c_{N}\xi_{N}})$ tendra vers l'infini s'il existe une suite infinie de valeurs propres $(\lambda_{k_{n},\ell_{n}})_{n},$ telle que $ \lambda_{k_{n},\ell_{n}}-c_{N}\frac{k_{n}^{2}\pi^{2}}{L^{2}}\to 0^{+}.$
Pour cette raison, nous consid\'ererons les valeurs propres satisfaisant $\lambda_{k,\ell}>(c_{N}+\varepsilon)\frac{k^{2}\pi^{2}}{L^{2}}.$ En effet, avec ce choix\footnote{Le choix $\lambda_{k,\ell}>c_{N}\frac{k^{2}\pi^{2}}{L^{2}}+\varepsilon$ ne conviendrait pas.}, on a $\frac{c_{N-1}\xi_{N-1}}{c_{N}\xi_{N}}\leq \sqrt{\frac{c_{N-1}(c_{N} - c_{N-1} +\varepsilon)}{c_{N}\varepsilon}}$ ce qui permet d'affirmer que les couples $(a_{i}, b_{i}), i = 0,\ldots, N,$ sont comparables les uns avec les autres ($b_{0}=0$) et nous obtenons la
%%%%%%%%%%%%%%%%%%%%%%%%%%%%%%%%%%%%%
%%
\begin{mapropo}
\label{proposition-conc-conf-fonctionnonguidee-Nsauts-mod:1}
Si $\varepsilon>0,$ il existe une constante $M_{\varepsilon}>0$ telle que
\begin{equation*}
\label{equation-conc-conf-fonctionnonguidee-Nsauts-mod:3}
\left\Vert \left(\!\begin{array}{c}
a_{i}\\
b_{i}
\end{array}\!\right)\right\Vert\leq M_{\varepsilon} \vert a_{0}\vert \leq M_{\varepsilon}\sqrt{\frac{c_{i}}{c_{0}}}\left\Vert
\left(\begin{array}{c}
a_{i}\\
b_{i}
\end{array}\!\right)\right\Vert, i = 0,\ldots, N, \forall \lambda_{k,\ell}>(c_{N}+\varepsilon)\frac{k^{2}\pi^{2}}{L^{2}}.
\end{equation*}
\end{mapropo}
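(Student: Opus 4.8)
Le plan est de ramener l'énoncé au lemme \ref{lemme-conc-conf-fonctionnonguidee-Nsauts:3}, qui fournit déjà l'inégalité de gauche $\vert a_{0}\vert\leq \sqrt{c_{i}/c_{0}}\,\Vert(a_{i},b_{i})\Vert$ pour \emph{tous} les indices $i=0,\ldots,N$, ainsi que la majoration $\Vert(a_{i},b_{i})\Vert\leq M\vert a_{0}\vert$ pour $i=0,\ldots,N-1$ (voir \eqref{equation-conc-conf-fonctionnonguidee-Nsauts:11}). Il ne restera donc qu'à traiter le seul cas manquant $i=N$, à savoir obtenir une majoration $\Vert(a_{N},b_{N})\Vert\leq M_{\varepsilon}\vert a_{0}\vert$ valable uniformément pour $\lambda_{k,\ell}>(c_{N}+\varepsilon)\frac{k^{2}\pi^{2}}{L^{2}}$.

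Pour cela, on partira de la relation de transfert à travers l'interface $S_{N-1}$, qui relie $(a_{N},b_{N})$ à $(a_{N-1},b_{N-1})$ au moyen de la matrice $B_{N-1,N}$ et d'une matrice unitaire (cette dernière laissant la norme euclidienne invariante) : on obtient ainsi $\Vert(a_{N},b_{N})\Vert\leq \Vert B_{N-1,N}\Vert\,\Vert(a_{N-1},b_{N-1})\Vert\leq M\,\Vert B_{N-1,N}\Vert\,\vert a_{0}\vert$. Tout reviendra donc à majorer $\Vert B_{N-1,N}\Vert=\max\big(1,\tfrac{c_{N-1}\xi_{N-1}}{c_{N}\xi_{N}}\big)$ (cf. \eqref{equation-conc-conf-fonctionnonguidee-Nsauts:9}) par une constante. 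C'est ici l'obstacle principal, et la raison pour laquelle le lemme \ref{lemme-conc-conf-fonctionnonguidee-Nsauts:3} ne s'étend pas directement à $i=N$ : l'estimation \eqref{equation-conc-conf-fonctionnonguidee-Nsauts:10bis} ne couvre que les indices $i\leq N-2$, et le rapport $\tfrac{c_{N-1}\xi_{N-1}}{c_{N}\xi_{N}}$ diverge lorsque $\xi_{N}\to 0$, c'est-à-dire lorsque $\lambda_{k,\ell}$ s'approche de la barrière spectrale $c_{N}\frac{k^{2}\pi^{2}}{L^{2}}$.

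La marge $\lambda_{k,\ell}>(c_{N}+\varepsilon)\frac{k^{2}\pi^{2}}{L^{2}}$ est précisément là pour écarter cette dégénérescence. En posant $\mu=\frac{k^{2}\pi^{2}}{L^{2}}$, un calcul direct donnera $\big(\tfrac{c_{N-1}\xi_{N-1}}{c_{N}\xi_{N}}\big)^{2}=\tfrac{c_{N-1}}{c_{N}}\cdot\tfrac{\lambda_{k,\ell}-c_{N-1}\mu}{\lambda_{k,\ell}-c_{N}\mu}$ ; comme $\lambda_{k,\ell}-c_{N}\mu>\varepsilon\mu$ et $\lambda_{k,\ell}-c_{N-1}\mu=(\lambda_{k,\ell}-c_{N}\mu)+(c_{N}-c_{N-1})\mu$, on en déduira la borne uniforme $\tfrac{c_{N-1}\xi_{N-1}}{c_{N}\xi_{N}}<\sqrt{\tfrac{c_{N-1}(c_{N}-c_{N-1}+\varepsilon)}{c_{N}\varepsilon}}$, indépendante de $k$, de $\ell$ et de $\lambda_{k,\ell}$. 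Il suffira alors de poser $M_{\varepsilon}:=M\,\max\!\big(1,\sqrt{\tfrac{c_{N-1}(c_{N}-c_{N-1}+\varepsilon)}{c_{N}\varepsilon}}\big)$ pour garantir $\Vert(a_{N},b_{N})\Vert\leq M_{\varepsilon}\vert a_{0}\vert$, l'inégalité de gauche étant quant à elle déjà acquise pour tout $i$ par le lemme \ref{lemme-conc-conf-fonctionnonguidee-Nsauts:3}. Quitte à agrandir $M_{\varepsilon}$, la même constante conviendra simultanément pour tous les indices $i=0,\ldots,N$.
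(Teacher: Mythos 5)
Votre preuve est correcte et suit essentiellement la même démarche que celle du papier : on invoque le lemme \ref{lemme-conc-conf-fonctionnonguidee-Nsauts:3} pour tous les indices $i\leq N-1$ et pour l'inégalité de gauche, puis on traite le cas $i=N$ en majorant $\Vert B_{N-1,N}\Vert=\max\bigl(1,\tfrac{c_{N-1}\xi_{N-1}}{c_{N}\xi_{N}}\bigr)$ grâce à la marge $\varepsilon$, ce qui donne exactement la borne $\sqrt{\tfrac{c_{N-1}(c_{N}-c_{N-1}+\varepsilon)}{c_{N}\varepsilon}}$ utilisée dans le texte. Votre rédaction explicite d'ailleurs le calcul du rapport $\bigl(\tfrac{c_{N-1}\xi_{N-1}}{c_{N}\xi_{N}}\bigr)^{2}$ que le papier se contente d'affirmer.
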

\noindent
qui a pour cons\'equence \eqref{equation-conc-conf-fonctionnonguidee:1}.
\noindent
En effet, on commence par noter que
\begin{enumerate}
\item nous pouvons \'ecrire $a_{i}\sin(\xi_{i}x) + b_{i}\cos(\xi_{i}x)  = \sqrt{a_{i}^{2} + b_{i}^{2}} \cos(\xi_{i}x-\beta_{i})$ o\`{u} $0\leq \beta_{i}<2\pi$ lorsque $h_{i-1}<x<h_{i};$
\item il existe au moins un indice $j, 0\leq j\leq N,$ tel que $\omega_{j}:=\omega\cap ((l_{1},l_{2})\times(h_{j},h_{j+1}))= (l_{1},l_{2})\times (d,d')$ avec $d'-d\geq \frac{b-a}{N+1}.$ 
\item $\int_{d}^{d'}u^{2}(x){\rm d}x = \frac{a_{j}^{2} + b_{j}^{2}}{2}(d'-d)\lbrack 1 + \frac{\sin(\xi_{j}(d'-d))}{\xi_{j}(d'-d)}\cos(\xi_{i}(d'+d)-2\beta_{i})\rbrack.$
\end{enumerate}
On pose $\bar{M}= \max_{\xi_{i}} \vert\frac{\sin(\xi_{i}(d'-d))}{\xi_{i}(d'-d)}\vert$ et comme $\xi_{j}>\sqrt{\frac{\varepsilon}{c_{j}}}\frac{k\pi}{L}\geq \sqrt{\frac{\varepsilon}{c_{N}}}\frac{k\pi}{L}$ on a $\bar{M}<1$ ce qui permet de conclure puisque $\int_{d}^{d'}u^{2}(x){\rm d}x \geq (1-\bar{M})\frac{a_{j}^{2} + b_{j}^{2}}{2}(d'-d).$
%%%%%%%%%%%%%%%%%%%%%%%%%%%%%%%%%%%%%%%%%%%%%%%%%%%%
\subsection{Mod\`{e}le $C^{1}$}
\label{soussection-con-conf-fonctionnonguidee-C1:1}
%%%%%%%%%%%%%%%%%%%%%%%%%%%%%%%%%%%%%%%%%%%%%%%%%%%%
Lorsque le coefficient $c$ a un peu de r\'egularit\'e on peut se ramener \`{a} la forme $F'' + Q F = 0.$ Par exemple, si le coefficient $c$ est deux fois d\'erivable, le changement de fonction $F= \frac{u}{\sqrt{c}}$ donne
\begin{equation*}
\label{equation-con-conf-fonctionnonguidee-C1:2}
F'' + \frac{c'^{2} - 2cc'' + 4 (\lambda -c\frac{k^{2}\pi^{2}}{L^{2}})c}{4 c^{2}} F= 0,
\end{equation*}
ce qui donne \`{a} penser que la solution $u$ aura un comportement sinuso\"{\i}dal pour $\lambda >\max_{x}(c\frac{k^{2}\pi^{2}}{L^{2}} + \frac{cc''}{2})$. Intuitivement, on serait dans la situation non guid\'ee de la Figure \ref{fig:figure1}.\\
Nous d\'ecomposons la preuve en quatre \'etapes.\\
$\bullet$ {\bf \'Etape 1 : Se ramener \`{a} $F'' + QF = 0.$}\\
La transformation de Liouville demande le changement de variable $s = \int_{0}^{x}\frac{1}{c(t)}{\rm d}t.$ 
Cette application, $g:x\to s,$ est positive, croissante et de classe $C^{1}$. Sa r\'eciproque $g^{-1}$ est aussi positive, croissante et de classe $C^{1}.$ Posant $F(s)= u(g^{-1}(s)),$ on v\'erifie que
\begin{equation}
\label{equation-con-conf-fonctionnonguidee-C1:3}
\begin{array}{l}
F'(s) = u'(g^{-1}(s))c(g^{-1}(s))\nonumber\\
F''(s) = u''(g^{-1}(s))(c(g^{-1}(s)))^2 + u'(g^{-1}(s))c'(g^{-1}(s))c(g^{-1}(s))
\end{array}
\end{equation}
ce qui implique $ u''(g^{-1}(s))c(g^{-1}(s)) = \frac{F''(s)}{c(g^{-1}(s))} - u'(g^{-1}(s))c'(g^{-1}(s)).$ Comme $cu'' + c'u' +(\lambda- c \frac{k^{2}\pi^{2}}{L^{2}}) u = 0$, on a
\begin{equation}
\label{equation-con-conf-fonctionnonguidee-C1:4}
\begin{array}{l}
F'' + Q F = 0.\nonumber\\
Q (s) = (\lambda -\frac{k^{2}\pi^{2}}{L^{2}}c\circ g^{-1}(s))c\circ g^{-1} (s)
\end{array}
\end{equation}
Pour ces op\'erations on a suppos\'e que $c$ est d\'erivable. La fonction $Q$ est positive d\`{e}s que $\lambda$ est assez grand. \\
%%%%%%%%%%%%%%%%%%%%%%%%%%%%%%%%%%%%%%%%%%%%%%%%%%%%%
 %%%%%%%%%%%%%%%%%%%%%%%%%%%%%%%%%%%%%%%%%%%%%%%%%%%%
{$\bullet$ \bf \'Etape 2 : Utiliser la transformation de Pr\"{u}fer modifi\'ee}\\
La r\'esolution de $F'' + Q F = 0$ se ram\`{e}ne \`{a} la r\'esolution des deux \'equations diff\'erentielles du premier ordre suivantes (cf. \cite{Z:1})
\begin{equation}
\label{equation-con-conf-fonctionnonguidee-C1:5}
\begin{array}{lll}
\left\lbrace\begin{array}{l}
\phi'(s)  =  -Q^{1/2} -\frac{1}{4}\frac{Q'}{Q}\sin(2\phi)\\
\frac{1}{R}R'(s))  =  \frac{1}{4}\frac{Q'}{Q}\cos(2\phi).
\end{array}\right.
\Longrightarrow
\left\lbrace\begin{array}{l}
F(s)  =  R(s)Q^{-1/4}\cos(\phi(s)),\\
F'(s)  =  R(s)Q^{1/4}\sin(\phi(s)).
\end{array}
\right.
\end{array}
\end{equation}
 %%%%%%%%%%%%%%%%%%%%%%%%%%%%%%%%%%%%%%%%%%%%%%%%%%%%
{$\bullet$ \bf \'Etape 3 : Propri\'et\'es de l'amplitude $R$ et de la phase $\phi$}\\
Posant
$\bar{c}(s):= c(g^{-1}(s)), \bar{c}_{M} := \int_{0}^{H}\frac{1}{c(t)}{\rm d}t,$
nous avons $F'' + Q F =  0$ sur l'intervalle $(0, \bar{c}_{M}).$ On note $c_{0}:= c(0)$ et $c_{H}:=c(H).$
\begin{monlem}
\label{lemme-conc-conf-fonctionnonguidee-C1:1}
Soit $\varepsilon >0$ et supposons que $c\in C^{1}(\lbrack 0,H\rbrack).$ On a
\begin{enumerate}
\item 
\begin{equation}
\label{equation-con-conf-fonctionnonguidee-C1:6}
\begin{array}{l}
(\lambda\geq c_{H}\frac{k^{2}\pi^{2}}{L^{2}})\Longrightarrow c^{2}_{0}(\frac{\lambda}{c(H)}-\frac{k^{2}\pi^{2}}{L^{2}})\leq Q \leq c^{2}_{H}(\frac{\lambda}{c_{0}}-\frac{k^{2}\pi^{2}}{L^{2}})\\
(\lambda\geq (c_{H}+\varepsilon)\frac{k^{2}\pi^{2}}{L^{2}})\Longrightarrow\left\vert\frac{Q'}{Q}(t)\right\vert \leq \frac{\bar{c}'}{\bar{c}}(1 + \frac{\bar{c}}{\varepsilon})
\end{array}
\end{equation}
\item Si $\lambda\geq (c_{H}+\varepsilon)\frac{k^{2}\pi^{2}}{L^{2}},$ les solutions $\phi$ et $R$ des \'equations diff\'erentielles \eqref{equation-con-conf-fonctionnonguidee-C1:5} v\'erifient 
\begin{eqnarray}
\label{equation-con-conf-fonctionnonguidee-C1:7}
\phi(s) & \sim& -\int_{0}^{s}Q^{1/2}(t){\rm d}t \mbox{ quand }(\frac{\lambda}{\bar{c}}-\frac{k^{2}\pi^{2}}{L^{2}})\to\infty\\
R(s) & = & C e^{\frac{1}{4}\int_{0}^{s}\frac{Q'}{Q}\cos(2\phi){\rm d}r}, C\not=0.\nonumber
\end{eqnarray}
\end{enumerate}
\end{monlem}
\begin{proof}
De $Q'(s) = \bar{c}'(\lambda - 2\bar{c}\frac{k^{2}\pi^{2}}{L^{2}})$ on voit que $ \frac{Q'}{Q} = \frac{\bar{c}'}{\bar{c}} (\frac	{\lambda -2\bar{c}\frac{k^{2}\pi^{2}}{L^{2}}}{\lambda -\bar{c}\frac{k^{2}\pi^{2}}{L^{2}}}) = \frac{\bar{c}'}{\bar{c}}(1 - \frac{\bar{c}\frac{k^{2}\pi^{2}}{L^{2}}}{\lambda - \bar{c}\frac{k^{2}\pi^{2}}{L^{2}}})$ et comme, avec l'hypoth\`{e}se faite, on a $\lambda -  \bar{c}\frac{k^{2}\pi^{2}}{L^{2}}\geq \varepsilon\frac{k^{2}\pi^{2}}{L^{2}}$ on arrive \`{a} $\frac{\bar{c}\frac{k^{2}\pi^{2}}{L^{2}}}{\lambda - \bar{c}\frac{k^{2}\pi^{2}}{L^{2}}}\leq \frac{\bar{c}}{\varepsilon}.$ Si on avait suppos\'e $\lambda\geq (2c_{H}+\varepsilon)\frac{k^{2}\pi^{2}}{L^{2}}$ on aurait $0< \frac{Q'}{Q}(s)< \frac{\bar{c}'}{\bar{c}}.$ Noter que pour \eqref{equation-con-conf-fonctionnonguidee-C1:7} la condition $(\frac{\lambda}{\bar{c}}-\frac{k^{2}\pi^{2}}{L^{2}})\to\infty$ est remplie si $k\to\infty$ mais aussi quand $\lambda = \lambda_{k,\ell}$ avec $\ell\to\infty.$
\end{proof}
L'amplitude $R$ n'est jamais nulle sinon elle le serait identiquement. En imposant $\phi(0) = 0$ ou $\phi(0) = \pi/2$ on obtient deux solutions lin\'eairement ind\'ependantes
 d'o\`{u} la solution cherch\'ee correspond \`{a} $\phi(0) = \frac{\pi}{2}$ (\`{a} un multiple pr\`{e}s)\footnote{Chaque valeur propre de cet op\'erateur est simple : si $u_{1}$ et $u_{2}$ sont deux fonctions propres associ\'ees \`{a} $\lambda$, le wronskien $W = cu'_{1}u_{2}-cu'_{2}u_{1}$ a une d\'eriv\'ee nulle sur $(0,H)$ ce qui montre que les vecteurs $(u_{1},cu'_{1})$ et $(u_{2},cu'_{2})$ sont colin\'eaires.}. On a donc trouv\'e la forme \'ecrite de la fonction propre correspondant \`{a} la valeur propre $\lambda\geq (c_{H}+\varepsilon)\frac{k^{2}\pi^{2}}{L^{2}})$ :
\begin{equation*}
\label{equation-con-conf-fonctionnonguidee-C1:8}
u(x) = R(g(x))(Q(g(x))^{-1/4} \cos(\phi(g(x)))\mbox{ avec } g(x) = \int_{0}^{x}\frac{1}{c(t)}{\rm d}t.
\end{equation*}
 %%%%%%%%%%%%%%%%%%%%%%%%%%%%%%%%%%%%%%%%%%%%%%%%%%%%
{$\bullet$ \bf \'Etape 4 : Estimation de $\Vert u\Vert^{2}_{L^{2}}$}
\begin{moncor}
\label{corollaire-conc-conf-fonctionnonguidee-C1:1}
Soient $\varepsilon>0$ et un ouvert $\omega\subset\Omega.$ Il existe une constante $a_{\varepsilon;\omega}>0,$ ind\'ependante par translation de $\omega$ dans $\Omega,$ telle
\begin{equation*}
\label{equation-con-conf-fonctionnonguidee-C1:9}
a_{\varepsilon;\omega}\leq \frac{\int_{\omega}\vert u(x)\vert^{2}{\rm d}x}{\int_{\Omega}\vert u(x)\vert^{2}{\rm d}x}\leq 1
\end{equation*}
pour la famille des fonctions propres associ\'ees aux valeurs propres $\lambda >(c_{H} + \varepsilon)\frac{k^{2}\pi^{2}}{L^{2}},$ hormis un ensemble fini.
\end{moncor}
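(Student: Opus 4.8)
The plan is to exploit the amplitude--phase representation obtained in Steps~1--3, namely $u(x)=R(g(x))\,Q(g(x))^{-1/4}\cos\big(\phi(g(x))\big)$ with $g(x)=\int_0^x c(t)^{-1}\,{\rm d}t$, and to reduce the two-dimensional ratio to a one-dimensional one. Indeed, the factor $\sin(\frac{k\pi}{L}x_1)$ contributes the same quantity $\int_{\omega_1}\sin^2$ resp. $\int_0^L\sin^2$ to numerator and denominator, and these are comparable (with limit $\mathrm{vol}(\omega_1)/L$) exactly as recorded for the coefficient $C_{k;\omega}$ in Section~\ref{section-modeleunsaut:1}. After the change of variable $s=g(x)$, for which ${\rm d}x=\bar c(s)\,{\rm d}s$, I would write, for any interval $I=(g(a),g(b))$,
\begin{equation}
\int_{a}^{b}|u(x)|^2\,{\rm d}x=\int_{I}R(s)^2Q(s)^{-1/2}\cos^2(\phi(s))\,\bar c(s)\,{\rm d}s,
\end{equation}
and I would introduce the non-oscillatory local energy $e(s):=R(s)^2Q(s)^{-1/2}=F(s)^2+F'(s)^2/Q(s)$, together with the weight $w(s):=\bar c(s)\,e(s)$.

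First I would show that $w$ is pinched between two positive constants, uniformly in $(k,\ell)$ under the standing constraint $\lambda>(c_H+\varepsilon)\frac{k^2\pi^2}{L^2}$. From the formula $R(s)=Ce^{\frac14\int_0^s\frac{Q'}{Q}\cos(2\phi)\,{\rm d}r}$ of Lemma~\ref{lemme-conc-conf-fonctionnonguidee-C1:1} and the bound $|Q'/Q|\le\frac{\bar c'}{\bar c}(1+\bar c/\varepsilon)$ there, integrated over the fixed interval $(0,\bar c_M)$, the amplitude $R$ is bounded above and below by constants depending only on $\varepsilon$ and $c$. The same constraint controls $Q$: the factor $\lambda-\frac{k^2\pi^2}{L^2}\bar c$ stays between $\varepsilon\frac{k^2\pi^2}{L^2}$ and $\lambda$, with $\frac{\lambda-c_0\frac{k^2\pi^2}{L^2}}{\lambda-c_H\frac{k^2\pi^2}{L^2}}\le 1+\frac{c_H-c_0}{\varepsilon}$, and $c_0\le\bar c\le c_H$, so $Q$ --- hence $Q^{-1/2}$ and the whole weight $w$ --- is comparable across $s$ with a ratio depending only on $\varepsilon$ and $c$.

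Next comes the key oscillatory estimate. Writing $\cos^2\phi=\tfrac12(1+\cos 2\phi)$, I would control $\int_I\cos(2\phi)\,w\,{\rm d}s$ by one integration by parts, using $\phi'=-Q^{1/2}-\frac14\frac{Q'}{Q}\sin(2\phi)$ so that $\phi'$ is large, of a single sign, and slowly varying relative to $\phi$. The boundary terms and the remainder are $O(Q^{-1/2})$, because $w'/\phi'$ and $w\phi''/(\phi')^2$ are both $O(Q^{-1/2})$ once $|Q'/Q|$ is bounded. This forces $\int_I\cos(2\phi)\,w\,{\rm d}s\to 0$, hence $\int_I\cos^2\phi\,w\,{\rm d}s=\tfrac12\int_I w\,{\rm d}s\,(1+o(1))$, \emph{provided} $Q^{1/2}\to\infty$, i.e. $\frac{\lambda}{\bar c}-\frac{k^2\pi^2}{L^2}\to\infty$. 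By the remark following Lemma~\ref{lemme-conc-conf-fonctionnonguidee-C1:1} this holds whenever $k\to\infty$ or $\ell\to\infty$, so it can fail only on a finite set of indices $(k,\ell)$ --- precisely the \og hormis un ensemble fini\fg{} of the statement.

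Finally I would assemble the estimate. Applying the averaging to $I=(g(a),g(b))$ and to $(0,\bar c_M)$ and using the uniform two-sided bounds $\underline w\le w\le\overline w$, the ratio is bounded below by $\frac{\underline w\,(g(b)-g(a))}{\overline w\,\bar c_M}(1+o(1))$ times the $x_1$-factor $\approxeq\mathrm{vol}(\omega_1)/L$. Since $g(b)-g(a)=\int_a^b c^{-1}\ge (b-a)/c_H$ regardless of the location of $(a,b)$ in $(0,H)$, this lower bound depends only on $b-a$, $\mathrm{vol}(\omega_1)$, $\varepsilon$ and $c$, hence is invariant under translation of $\omega$ inside $\Omega$, yielding the desired $a_{\varepsilon;\omega}>0$. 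The main obstacle is making the oscillatory cancellation uniform in $(k,\ell)$: one must carry the slowly varying weight $w$ through the integration by parts and verify that its variation is dominated by $\phi'$, which is exactly what the bound on $Q'/Q$ and the growth $Q^{1/2}\to\infty$ (away from the finite exceptional set) provide.
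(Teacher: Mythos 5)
Your proposal follows the paper's own Steps 1--3 verbatim (Liouville change of variable, modified Pr\"ufer system \eqref{equation-con-conf-fonctionnonguidee-C1:5}, bounds of Lemma \ref{lemme-conc-conf-fonctionnonguidee-C1:1}); the divergence is only in Step 4. There the paper does not integrate by parts: it uses that $x\mapsto\phi(g(x))$ is strictly monotone off a finite set of eigenvalues (because $-\phi'\geq(1-\epsilon_{1})Q^{1/2}>0$, cf. \eqref{equation-con-conf-fonctionnonguidee-C1:12}), performs the change of variable $t=\phi(g(x))$, bounds the resulting weight $c/(-\phi')$ above and below, and evaluates $\int\cos^{2}t\,{\rm d}t$ explicitly: the main term is half the length of the $t$-interval, which by the mean value theorem is $\approxeq Q^{1/2}(g(b)-g(a))\to\infty$, so the bounded oscillatory remainder is harmless, and $g(b)-g(a)\in\lbrack (b-a)/c_{H},(b-a)/c_{0}\rbrack$ gives the translation invariance. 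Your route --- splitting $\cos^{2}\phi=\tfrac12(1+\cos 2\phi)$ and killing $\int_{I}\cos(2\phi)\,w\,{\rm d}s$ by one integration by parts against the fast phase --- rests on exactly the same two facts (comparability of the weight across $s$ with a ratio depending only on $\varepsilon$ and $c$, and $|\phi'|\approxeq Q^{1/2}\to\infty$ off a finite exceptional set) and reaches the same conclusion with the same translation invariance; it is a legitimate stationary-phase variant of the paper's computation.

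One step, however, would fail as written under the paper's hypotheses. Your remainder contains $w\phi''/(\phi')^{2}$, and $\phi''$ need not exist under $\mathbf{(H2)}$: differentiating $\phi'=-Q^{1/2}-\frac14\frac{Q'}{Q}\sin(2\phi)$ requires $Q''$, hence $\bar c''$, hence $c\in C^{2}$, whereas $\mathbf{(H2)}$ only assumes $c\in C^{1}(\lbrack 0,H\rbrack)$. The repair is cheap: before integrating by parts, replace $\frac{1}{2\phi'}$ by $-\frac{1}{2Q^{1/2}}$; since $\vert Q^{1/2}+\phi'\vert=\frac14\vert\frac{Q'}{Q}\sin(2\phi)\vert$ is bounded and $\vert(\sin 2\phi)'\vert\leq 2\vert\phi'\vert$, this substitution costs only $O(Q^{-1/2})\int_{I}w\,{\rm d}s$, and the subsequent integration by parts differentiates only $w/(2Q^{1/2})$, which involves $Q'$ and $R'$ but no second derivative. (The paper's change of variable avoids the issue entirely, which is precisely why it works with $C^{1}$ regularity.) Two smaller points: your claim that $w$ is ``pinched between two positive constants uniformly in $(k,\ell)$'' is literally false, since $w\approxeq Q^{-1/2}\to 0$ as $\lambda\to\infty$; what is true, and what your final quotient actually uses, is $\sup_{s}w/\inf_{s}w\leq C(\varepsilon,c)$, which your own argument correctly establishes. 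And for the statement as given ($\omega$ an arbitrary open set, not a product $\omega_{1}\times(a,b)$) you should add the paper's one-line reduction: $\omega$ contains a parallelepiped, and a lower bound for the parallelepiped is a lower bound for $\omega$.
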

\begin{proof}
Il vient du lemme \ref{lemme-conc-conf-fonctionnonguidee-C1:1} que le module de la fonction $R$ est encadr\'e
%born\'e inf\'erieurement et sup\'erieurement
 par deux constantes strictement positives : $r_{1}<\vert R\vert<r_{2}$ puisque $0<s<\bar{c}_{M}.$ Joignant ce r\'esultat \`{a} l'encadrement de $Q$  de \eqref{equation-con-conf-fonctionnonguidee-C1:6} on peut affirmer que pour $0<a<b<H$
\begin{multline*}
\label{equation-con-conf-fonctionnonguidee-C1:10}
r_{1}^{2}c_{H}^{-1}(\frac{\lambda}{c_{0}}-\frac{k^{2}\pi^{2}}{L^{2}})^{-1/2}\int_{a}^{b}\cos^{2}(\phi(g(x))){\rm d}x\leq\int_{a}^{b}(R(g{\large {\large }}(x)))^{2} (Q(g(x)))^{-1/2}\cos^{2}(\phi(g(x))){\rm d}x\\
\leq r_{2}^{2}c_{0}^{-1}(\frac{\lambda}{c_{H}}-\frac{k^{2}\pi^{2}}{L^{2}})^{-1/2}\int_{a}^{b}\cos^{2}(\phi(g(x))){\rm d}x. 
\end{multline*}
Gr\^{a}ce \`{a} l'encadrement similaire pour $\int_{0}^{H}(R(g(x)))^{2} (Q(g(x)))^{-1/2}\cos^{2}(\phi(g(x))){\rm d}x$
 %ce qui veut dire qu'
 il suffit de regarder le rapport\footnote{D\`{e}s que $2\varepsilon<c_{H}-c_{0}$, il existe $0<d<\frac{c_{0}}{c_{H}}$  tel que $d<(\frac{\lambda}{c_{H}}-\frac{k^{2}\pi^{2}}{L^{2}})({\frac{\lambda}{c_{0}}-\frac{k^{2}\pi^{2}}{L^{2}}})^{-1}<1$}$\int_{a}^{b}\cos^{2}(\phi(g(x))){\rm d}x/\int_{0}^{H}\cos^{2}(\phi(g(x))){\rm d}x.$ Comme $\phi'$ est n\'egative pour les valeurs propres consid\'er\'ees, \`{a} un nombre fini pr\`{e}s, la fonction $x\to \phi(g(x))$ est strictement d\'ecroissante. Avec le nouveau changement de variable $t = \phi(g(x))$, on a ${\rm d}x = \frac{c(g^{-1}(\phi^{-1}(t)))}{\phi'(\phi^{-1}(t))}{\rm d}t$ ce qui donne
\begin{equation*}
\label{equation-con-conf-fonctionnonguidee-C1:11}
\int_{a}^{b}\cos^{2}(\phi(g(x))){\rm d}x = \int_{\phi(g(b))}^{\phi(g(a))} \frac{c(g^{-1}(\phi^{-1}(t)))}{-\phi'(\phi^{-1}(t))}\cos^{2}t {\rm d}t.
\end{equation*}
o\`{u} on contr\^{o}le tr\`{e}s bien $c(g^{-1}(\phi^{-1}(t)))$. Combinant $-\phi' = Q^{1/2} +\frac{1}{4}\frac{Q'}{Q}\sin(2\phi)$ et \eqref{equation-con-conf-fonctionnonguidee-C1:6} on a 
%Comme $\lambda >(c_{H} + \varepsilon)\frac{k^{2}\pi^{2}}{L^{2}}$, on a 
\begin{equation}
\label{equation-con-conf-fonctionnonguidee-C1:12}
(1-\epsilon_{1})Q^{1/2}\leq -\phi' \leq(1+\epsilon_{1})Q^{1/2}
\end{equation}
 %\`{a} l'exception \'eventuelle d'
 sauf, peut-\^{e}tre, un nombre fini de valeurs propres $\lambda \in \Lambda$ (le cardinal de $\Lambda$ d\'epend de $\varepsilon)$ et, par suite, 
\begin{multline*}
\label{equation-con-conf-fonctionnonguidee-C1:13}
\frac{M}{1+\epsilon_{1}} \int_{\phi(g(b))}^{\phi(g(a))}\frac{\cos^{2}t}{Q^{1/2}(t)}{\rm d}t  \leq \int_{a}^{b}\cos^{2}(\phi(g(x))){\rm d}x \leq  \frac{M}{1-\epsilon_{1}} \int_{\phi(g(b))}^{\phi(g(a))}\frac{\cos^{2}t}{Q^{1/2}(t)}{\rm d}t\\
\frac{M(1-\epsilon_{2})}{c_{H}(\frac{\lambda}{c_{0}}- \frac{k^{2}\pi^{2}}{L^{2}})^{\frac{1}{2}}} \int_{\phi(g(b))}^{\phi(g(a))}\cos^{2}t{\rm d}t  \leq \int_{a}^{b}\cos^{2}(\phi(g(x))){\rm d}x  \leq 
 \frac{M(1+\epsilon_{2})}{c_{0}(\frac{\lambda}{c_{H}}- \frac{k^{2}\pi^{2}}{L^{2}})^{\frac{1}{2}}} \int_{\phi(g(b))}^{\phi(g(a))}\cos^{2}t{\rm d}t.
\end{multline*}
Dans l'int\'egrale $\int_{\phi(g(b))}^{\phi(g(a))}\cos^{2}t{\rm d}t = \frac{1}{2}\lbrack \phi(g(a)) -\phi(g(b)) +\frac{\cos(2(\phi(g(a))))  -  \cos(2(\phi(g(b))))}{2}\rbrack$ le dernier terme du crochet est major\'e par 1. Pour $\phi(g(a)) -\phi(g(b))  = (g(b)-g(a))(-\phi'(d_{1}))$ on a vu que l'on contr\^{o}le $\phi'(d_{1})$ ind\'ependamment de la position de $a$ et $b$ (utiliser \eqref{equation-con-conf-fonctionnonguidee-C1:6} et \eqref{equation-con-conf-fonctionnonguidee-C1:12}). Il reste \`{a} constater que $(g(b)-g(a))= (b-a)\frac{1}{d_{2}}$ pour un r\'eel $\frac{1}{c_{H}}<d_{2}<\frac{1}{c_{0}}$ ce qui montre que c'est la diff\'erence $(b-a)$ qui importe. On fait de m\^{e}me pour $\int_{\phi(g(H))}^{\phi(g(0))}\cos^{2}t{\rm d}t$ ce qui permet de conclure pour un parall\'el\'epip\`{e}de puisque $\int_{l_{1}}^{l_{2}}\sin^{2}(k\pi x_{1}){\rm d}x_{1} = \frac{l_{1} -l_{2}}{2} -\lbrack \frac{\sin(2 k\pi x_{1})}{4k\pi}\rbrack_{l_{1}}^{l_{2}}.$ Il n'y a pas de difficult\'e \`{a} g\'en\'eraliser ceci \`{a} n'importe quel ouvert $\omega$ puisque ce dernier contient toujours un parall\'el\'epip\`{e}de.
\end{proof}
%%%%%%%%%%%%%%%%%%%%%%%%%%%%%%%%%%%%%%%%%%%%%%%%%%%%
%%%%%%%%%%%%%%%%%%%%%%%%%%%%%%%%%%%%%%%%%%%%%%%%%%%%
\section{Lemme \ref{lemme-conc-conf-fonctionguidee-1saut:2} : indications}
\setcounter{equation}{0}
\label{annexe-Lemme preparatoire:1}
\begin{maremarque}
\label{remarque:3}
Comme les valeurs propres guid\'ees $\lambda_{k,\ell}$ v\'erifient $k^{2}\pi^{2}< \lambda_{k,\ell} < c_{1}k^{2}\pi^{2}$, elles sont comparables \`{a} $k^{2}\pi^{2}.$
\end{maremarque}
\vskip-.3cm
\noindent
$\bullet$ Les notations d\'efinies en \eqref{notation:1} seront parfois \'etendues au cours du texte en y rempla\c{c}ant la valeur propre $\lambda_{k,\ell}$ par la variable $\lambda$ parcourant un intervalle de $\mathbb{R}$ mais aucune confusion ne sera possible.\\
$\bullet$ L'entier $k$ est provisoirement fix\'e, la constante $c_{1}$ est suppos\'ee strictement sup\'erieure \`{a} $c_{0}=1$. On a $\mathcal{L}_{k}$ valeurs propres $\lambda_{k,\ell}$ class\'ees dans l'ordre croissant $k^{2}\pi^{2}< \lambda_{k,1}< \lambda_{k,2}<\cdots<\lambda_{k,\mathcal{L}_{k}}<c_{1}k^{2}\pi^{2}.$ Elles correspondent aux abscisses des points d'intersection de la courbe repr\'esentant la fonction croissante $\lambda\mapsto\frac{1}{\xi'_{1}}\tanh(\xi'_{1}(H-h_{0}))$ avec la courbe $\lambda \mapsto -\frac{1}{\xi_{0}}\tan(\frac{\xi_{0}}{2})$ o\`{u}, ici, $\xi_{0}= (\lambda-k^{2}\pi^{2})^{1/2},  \xi'_{1}= (k^{2}\pi^{2}-\frac{\lambda}{c_{1}})^{1/2}.$ La figure 3 les repr\'esente o\`{u} on a pos\'e
\begin{equation}
\label{notation:2}
\lambda_{\ell}= \lambda_{\ell}(k) = k^{2}\pi^{2} + (2\ell-1)^{2}\frac{\pi^{2}}{4h_{0}^{2}}, \lambda^{\ell}=\lambda^{\ell}(k):= k^{2}\pi^{2} + \ell^{2}\frac{\pi^{2}}{h_{0}^{2}}, 1\leq \ell\leq \mathcal{L}_{k}+1.
\end{equation}
Maintenant, nous nous int\'eressons \`{a} l'intervalle $I_{k,\ell}:=(\lambda_{\ell}, \lambda_{\ell+1})$ et on voit que les deux courbes sont convexes sur $(\lambda_{\ell},\lambda^{\ell}).$ Avec le changement de variable $I_{k,\ell}\ni \lambda \to \mu = \lambda -k^{2}\pi^{2} - (2\ell -1)^{2}\frac{\pi^{2}}{4h_{0}^{2}},$ la nouvelle variable $\mu$ parcourt l'intervalle $ (0, 2\ell\frac{\pi^{2}}{h_{0}^{2}}),$ et $\lambda_{\ell}, \lambda^{\ell}, \lambda_{k,\ell}, (\lambda_{\ell} + \lambda_{\ell+1})/2$ correspondent respectivement \`{a} $ \mu_{\ell} =0, \mu^{\ell} =(4\ell-1)\frac{\pi^{2}}{4h_{0}^{2}},\mu_{k,\ell}, (\mu_{\ell} + \mu_{\ell+1})/2 =  \ell\frac{\pi^{2}}{h_{0}^{2}}$. La convexit\'e des deux courbes sur $(\mu_{\ell},\mu^{\ell})$ permet d'introduire la valeur $\tilde{\mu}$ point d'intersection entre la courbe en $\tanh$ et la droite qui est tangente en $\mu^{\ell}$ \`{a} l'autre courbe. Cette droite d\'efinie par $\mu\to -\frac{h_{0}^{3}}{2\ell^{2}\pi^{2}}(\mu-\mu^{\ell})$ permet de voir que $0<\mu^{\ell}-\mu_{k,\ell}<\mu^{\ell} -\tilde{\mu}.$\\
 \begin{minipage}[t]{155mm}
\begin{wrapfigure}{r}{4.7cm}
\includegraphics[scale=.7]{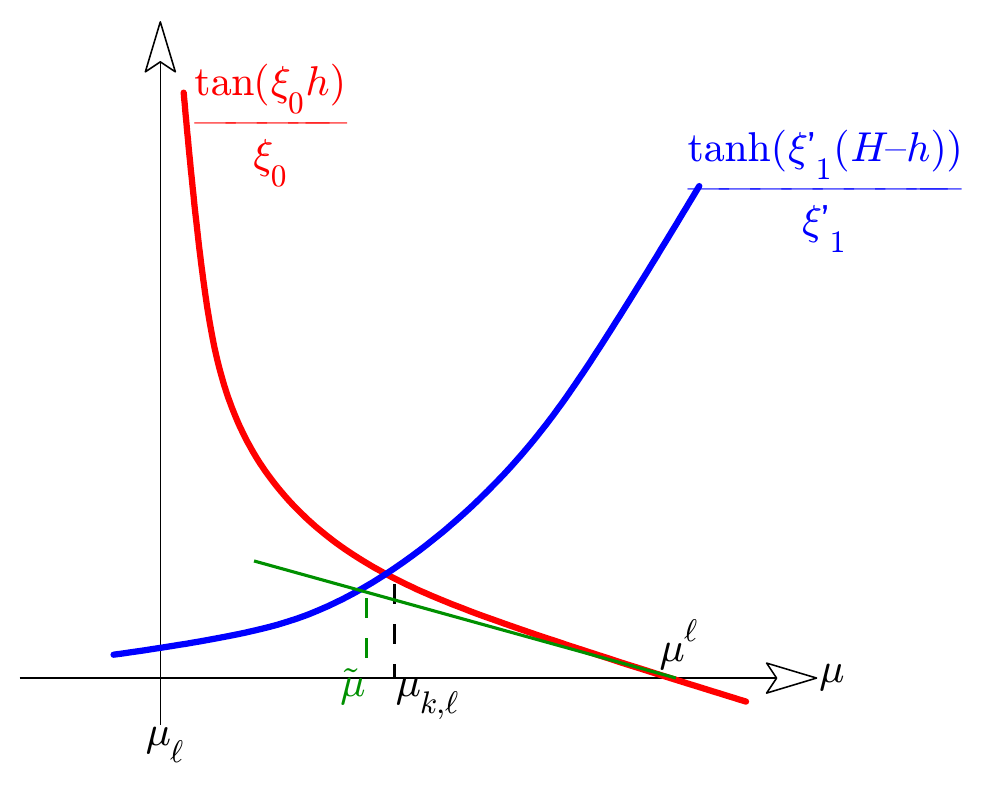}\hfill
\end{wrapfigure}
{\bf 1/}  La quantit\'e $\xi_{0}$ devient $\sqrt{\mu + (2\ell -1)^{2}\frac{\pi^{2}}{4h_{0}^{2}}}.$\\
{\bf 2/} La restriction de la courbe $\lambda \mapsto -\frac{1}{\xi_{0}}\tan(\xi_{0}h_{0})$ \`{a} l'intervalle $I_{k,\ell}$\\ est ind\'ependante de l'indice $k$ et correspond maintenant \`{a}
$$\mu\to -\frac{\tan(\sqrt{\mu+(2\ell-1)^{2}\frac{\pi^{2}}{4h_{0}^{2}}}h_{0})}{\sqrt{\mu+(2\ell-1)^{2}\frac{\pi^{2}}{4h_{0}^{2}}}}.$$ 
D'autre part,  la courbe en tangente hyperbolique d\'epend de $k$\\
 et s'\'ecrase de plus en plus vers l'axe des abscisses lorsque $k\to \infty,$\\
 ce qui prouve que la suite $(\mu_{k,\ell})_{k}$ tend vers $\mu^{\ell}$ quand $k\to\infty,$\\ l'indice $\ell$ restant fixe.
%Si $\mu^{\ell}$ correspond \`{a} $\lambda^{\ell}$ et $\mu_{k,\ell}$ \`{a} $\lambda_{k,\ell},$ on obtient
\end{minipage}\\

Lorsque $k\to\infty,$ on v\'erifie que
%\vskip2cm
\begin{eqnarray}
\label{equation-intersection-1saut:1}
\frac{\tanh(\xi'_{1}(H-h_{0}))}{\xi'_{1}} &=&\sqrt{\frac{c_{1}}{c_{1}-1}}\frac{1}{k\pi}\left( 1 + \frac{1}{2(c_{1}-1)k^{2}}(\frac{\mu-\mu^{\ell}}{\pi^{2}}+\frac{\ell^{2}}{h_{0}^{2}}) +o(\frac{1}{k^{2}})\right),
%-\frac{\tan(\xi_{0}h)}{\xi_{0}} &=& \frac{h_{0}^{3}}{2\ell^{2}\pi^{2}}(\mu^{\ell}-\mu) +o(\mu^{\ell}-\mu),\nonumber\\
%\mu^{\ell} -\mu_{k,\ell} &=&  2\frac{\ell^{3}\pi^{2}}{h_{0}^{3}}\sqrt{\frac{c_{1}}{c_{1}-1}}\frac{1}{k} + o(\frac{1}{k}).\nonumber
\end{eqnarray}
et ainsi la valeur $\tilde{\mu}$ est solution de
\begin{equation*}
\label{equation-intersection-1saut:2}
\sqrt{\frac{c_{1}}{c_{1}-1}}\frac{1}{k\pi}\left( 1 + \frac{1}{2(c_{1}-1)k^{2}}(\frac{\mu-\mu^{\ell}}{\pi^{2}}+\frac{\ell^{2}}{h_{0}^{2}}) +o(\frac{1}{k^{2}})\right) = -\frac{h_{0}^{3}}{2\ell^{2}\pi^{2}}(\mu-\mu^{\ell}),
\end{equation*}
ce qui donne
\begin{equation}
\label{equation-intersection-1saut:3}
\mu^{\ell}-\tilde{\mu} = 2\sqrt{\frac{c_{1}}{c_{1}-1}}\frac{\ell^{2}}{h_{0}^{3}}\frac{\pi}{k}\left(1 + \frac{\ell^{2}}{2(c_{1}-1)k^{2}h_{0}^{2}} +
o(\frac{1}{k^{2}})\right) = O(\frac{1}{k}).
\end{equation}
Toujours lorsque $k\to \infty,$ on v\'erifie que $-\frac{\tan(\sqrt{\mu+(2\ell-1)^{2}\frac{\pi^{2}}{4h_{0}^{2}}}h_{0})}{\sqrt{\mu+(2\ell-1)^{2}\frac{\pi^{2}}{4h_{0}^{2}}}} = \frac{h_{0}^{3}}{2\ell^{2}\pi^{2}}(\mu^{\ell}-\mu) +o(\mu^{\ell}-\mu)$. On \'ecrit que cette expression est \'egale au second membre de \eqref{equation-intersection-1saut:1}, ce qui est exactement la relation de dispersion \eqref{dispersionguidee-1saut:1} et donne
\begin{equation}
\label{equation-intersection-1saut:4}
\mu^{\ell} -\mu_{k,\ell} = 2\frac{\ell^{2}\pi}{h_{0}^{3}}\sqrt{\frac{c_{1}}{c_{1}-1}}\frac{1}{k} + o(\frac{1}{k^{2}}) + o(\mu^{\ell}-\mu_{k,\ell})
\end{equation}
Utilisant \eqref{equation-intersection-1saut:3}, on peut r\'esoudre la difficult\'e de la pr\'esence simultan\'ee de $o(\frac{1}{k^{2}})$ et $o(\mu^{\ell}-\mu_{k,\ell})$ dans \eqref{equation-intersection-1saut:4}, ce qui donne
\begin{equation*}
\label{equation-intersection-1saut:5}
\mu^{\ell}-\mu_{k,\ell} = 2\frac{\ell^{2}\pi}{h_{0}^{3}}\sqrt{\frac{c_{1}}{c_{1}-1}}\frac{1}{k} + o(\frac{1}{k}).
\end{equation*}
Il n'y a plus qu'\`{a} revenir aux notations en $\lambda$ d'o\`{u} $ \lambda_{k,\ell}^{\frac{1}{2}} = k\pi (1 + \frac{\ell^{2}}{2h_{0}^{2}}\frac{1}{k^{2}} + o(\frac{1}{k^{2}}))$ tandis que la valeur de $\xi'_{{1}_{\vert\lambda_{k,\ell}}}$ est donn\'ee par $\xi'_{1} = \sqrt{\frac{c_{1}}{c_{1}-1}}k\pi\left(1- \frac{\ell^{2}}{2h_{0}^{2}(c_{1}-1)k^{2}} + o(\frac{1}{k^{2}})\right).$
%%%%%%%%%%%%%%%%%%%%%%%%%%%%%%%%%%%%%%%%%%%%%%%%%%%%
%%%%%%%%%%%%%%%%%%%%%%%%%%%%%%%%%%%%%%%%%%%%%%%%%%%%
\section{Proposition \ref{proposition-conc-conf-fonctionguidee-1saut:1}
 : indications}
\label{annexe-comportement-asymptotique-fonctionpropreguidee1indications}
\setcounter{equation}{0}
\begin{maremarque}
\label{remarque:4}
Choisir $h_{0}=\frac{1}{2}$ n'est pas restrictif. En effet, partons des trois \'el\'ements : l'ouvert $\Omega = (0,1)\times(0,H)$ avec une interface en $x_{2} =h$, le coefficient de diffusion $c$ et le couple $(A,D(A))$ introduits dans la Section \ref{section-introduction} et posons
\begin{equation}
\label{applicationunitaire:1}
\begin{array}{c}
\tilde{\Omega}= (0,1)\times(0,\tilde{H}), \Omega = (0,1)\times(0,H), 2h\tilde{H}=H,\\
\varphi:\tilde{\Omega}\mapsto\Omega, \varphi(\tilde{x}) = \left\lbrace\begin {array}{l}
(\tilde{x}_{1},2h\tilde{x}_{2})\mbox{ si }0<\tilde{x}_{2}<\frac{1}{2},\\
(\tilde{x}_{1},2h(\tilde{x}_{2}-\frac{1}{2}) +h\mbox{ si }\frac{1}{2}<\tilde{x}_{2}<\tilde{H},
\end{array}
\right.\\
c(x)=(2h)^{2}\tilde{c}(\varphi^{-1}(x)),\\
U: L^{2}(\tilde{\Omega}, \tilde{c}^{-1}{\rm d}\tilde{x})\mapsto L^{2}(\Omega, \bar{c}^{-1}{\rm d}x), (U\tilde{f})(x) = \sqrt{2h}\tilde{f}(\varphi^{-1}(x)),
\end{array}
\end{equation}
on v\'erifie que 
\begin{enumerate}
\item $U$ est une application unitaire entre ces deux espaces d'Hilbert;
\item l'interface de $\Omega$ est ramen\'ee de $x_{2} =h$ \`{a} $\tilde{x}_{2} = \frac{1}{2};$
\item $U^{-1}AU = \tilde{A}$ o\`{u} le couple $(\tilde{A},D(\tilde{A})$ est d\'efini par $\tilde{A}=-\tilde{c}\Delta$ et $D(\tilde{A}) := \{\tilde{u}\in H^{1}_{0}(\tilde{\Omega}); \tilde{A}\tilde{u}\in L^{2}(\tilde{\Omega})\}$.
\end{enumerate}
 Comme $c_{\vert \tilde{\Omega}_{0}}$ n'est pas \'egal \`{a} 1 mais \`{a} $(2h)^{2}$ il faut corriger les valeurs propres par ce coefficient multiplicatif mais les fonctions propres sont inchang\'ees.
 \end{maremarque}
 \label{remarque:6}
Les quantit\'es $\xi'_{1}(k,\ell), 1\leq \ell\leq \mathcal{L}_{k},$ sont les indicateurs du taux de d\'ecroissance des fonctions propres guid\'ees dans la partie $\Omega_{1}$ et elles v\'erifient
$
%\begin{equation}
%\label{xi-1:1}
\xi'_{1}(k,1)>\xi'_{1}(k,2)>\cdots >\xi'_{1}(k,\mathcal{L}_{k})>0.
%\end{equation}
$
\begin{maremarque}
Pour $h=\frac{1}{2}, c_{1}=2,$ des calculs assez simples aboutissent aux in\'egalit\'es plus pr\'ecises suivantes 
\begin{equation*}
%\label{estimationguidee:0}
\begin{array}{c}
\pi<\xi_{0}(k,1)<\xi_{0}(k,2)<\cdots <\pi(k-2)<\xi_{0}(k,\mathcal{L}_{k})<\pi k\\
\frac{\pi}{\sqrt{2}}\sqrt{k^{2}-1}>\xi'_{1}(k,1)>\xi'_{1}(k,2)>\cdots >\sqrt{2}\pi\sqrt{k-1}>\xi'_{1}(k,\mathcal{L}_{k})>\frac{\pi}{\sqrt{2}}\sqrt{k-\frac{1}{2}}.
\end{array}
\end{equation*}
Dans ce cas particulier, la quantit\'e $\xi'_{1}(k,\mathcal{L}_{k})$ est de l'ordre de $\sqrt{k}$ donc du m\^{e}me ordre que $\lambda_{k,\mathcal{L}_{k}}^{\frac{1}{4}}$ mais ce r\'esultat ne semble pas exact dans le cas g\'en\'eral, sans l'exclure cependant.
\end{maremarque}
%%%%%%%%%%%%%%%%%%%%%%%%%%%%%%%%%%%%%%%%%%%%%%%%%%%%

%%%%%%%%%%%%%%%%%%%%%%%%%%%%%%%%%%%%%%%%%%%%%%%%%%%%
Dans la suite on n'impose plus $c_{1}=2$ mais, comme on suppose $h_{0}=\frac{1}{2},$ on a
\begin{equation}
\label{notation:2bis}
\lambda_{\ell}= \lambda_{\ell}(k) = k^{2}\pi^{2} + (2\ell-1)^{2}\pi^{2}, \lambda^{\ell}=\lambda^{\ell}(k):= k^{2}\pi^{2} + 4\ell^{2}\pi^{2}, 1\leq \ell\leq \mathcal{L}_{k}+1.
\end{equation}
\begin{figure}
\hspace{-1cm}\includegraphics[scale=.45]{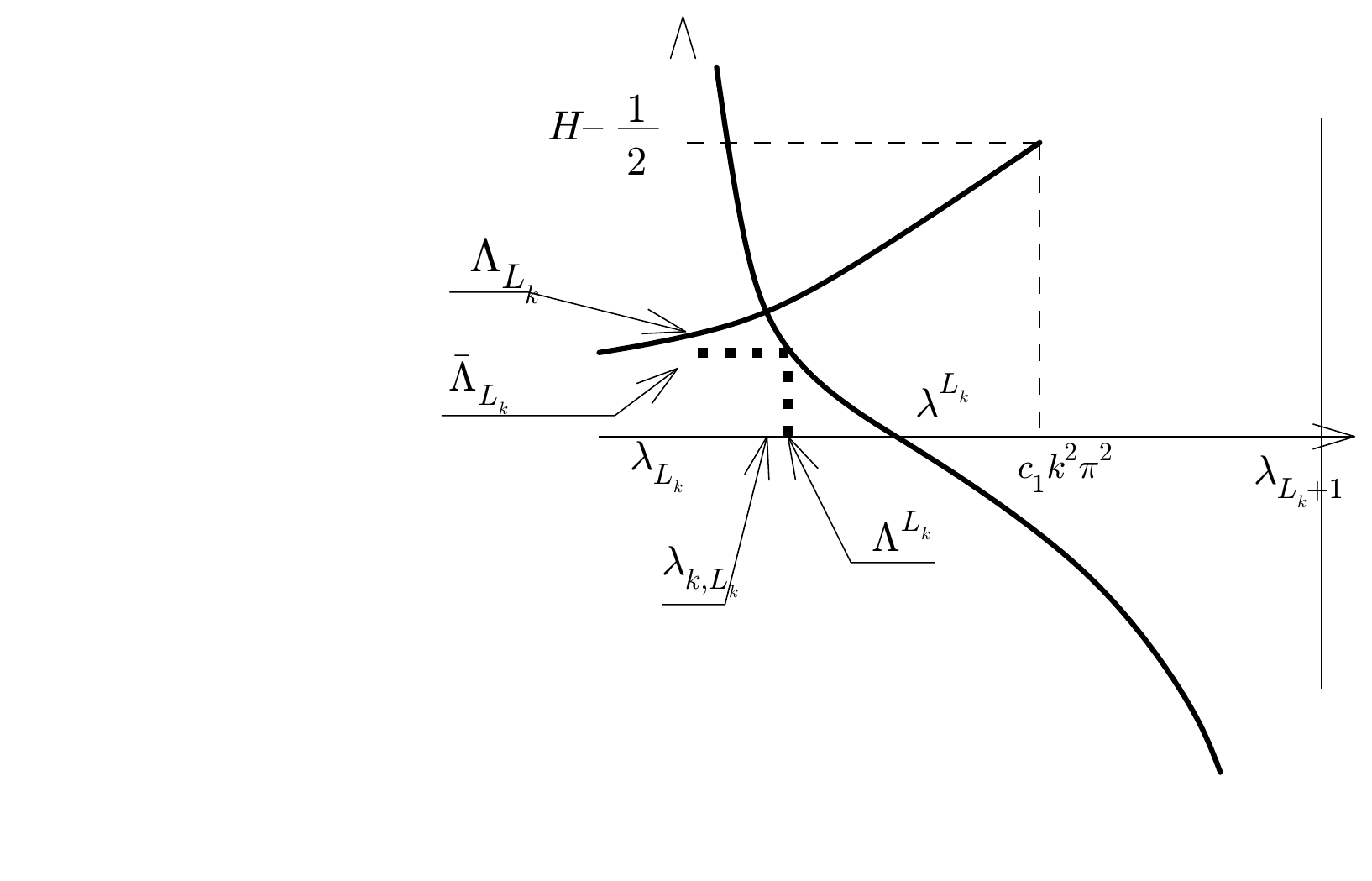}\hspace{1cm}\includegraphics[scale=.45]{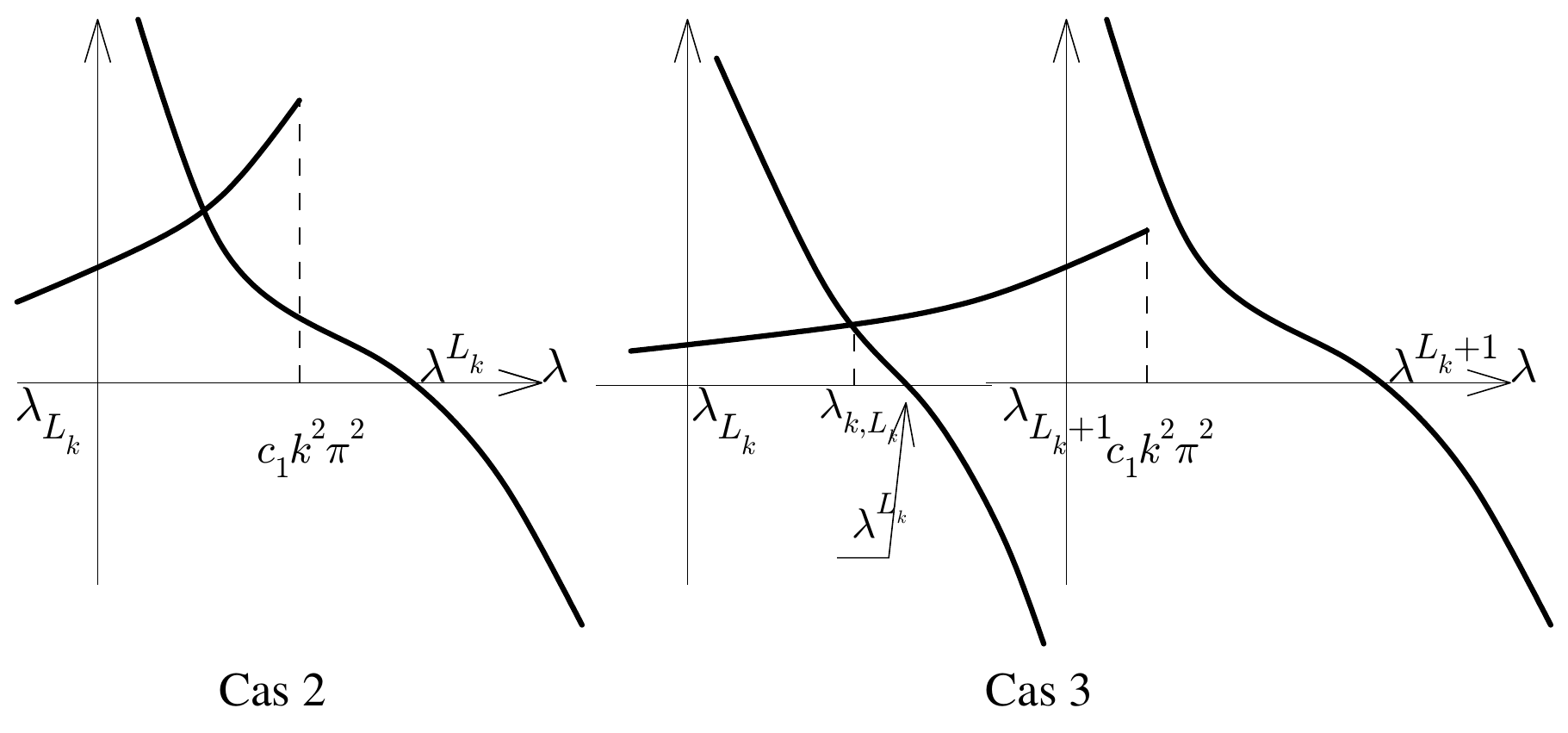}\hfill
\label{fig:figure4}
\caption{{\bf Cas 1} \`{a} gauche, {\bf Cas 2} au milieu, {\bf Cas 3} \`{a} droite}
\end{figure}
%Appelant $\bar{\mathcal{L}}_{k}$ le plus petit entier v\'erifiant $c_{1}k^{2}\pi^{2}\leq \lambda_{\bar{\mathcal{L}}_{k}+ 1}, $ 
On est confront\'e \`{a} trois possibilit\'es pour situer $\mathcal{L}_{k}$ (le plus grand entier tel que $\lambda_{k,\mathcal{L}_{k}}<c_{1}^{2}k^{2}\pi^{2}$), dans les intervalle $I_{k,\ell} =(\lambda_{\ell}, \lambda_{\ell+1})$ :
\begin{enumerate}
\item {\bf Cas 1},  $\lambda^{\mathcal{L}_{k}}\leq c_{1}k^{2}\pi^{2}\leq \lambda_{\mathcal{L}_{k}+1}$  (repr\'esent\'e \`{a} gauche dans la Figure 4) d'o\`{u} $\mathcal{L}_{k}\leq \frac{\sqrt{c_{1}-1}}{2}k\leq \mathcal{L}_{k}+\frac{1}{2}$ ;
 \item {\bf Cas 2}, i.e. $\lambda_{\mathcal{L}_{k}}<c_{1}k^{2}\pi^{2}<\lambda^{\mathcal{L}_{k}}$ et comme les deux courbes ont un point commun dans cet intervalle (centre de la Figure 4) on a $\mathcal{L}_{k}-\frac{1}{2}< \frac{\sqrt{c_{1}-1}}{2}k< \mathcal{L}_{k}$  et $0<-\frac{\tan(\frac{1}{2}\sqrt{c_{1}-1}k\pi)}{\sqrt{c_{1}-1}k\pi}<H-\frac{1}{2}$ ;
\item {\bf Cas 3}, i.e. $\lambda_{\mathcal{L}_{k}+1}<c_{1}k^{2}\pi^{2}<\lambda^{\mathcal{L}_{k}+1}$ et  comme ces deux courbes n'ont pas de point commun dans cet intervalle (\`{a} droite de la Figure 4) on a $\mathcal{L}_{k}+\frac{1}{2}< \frac{\sqrt{c_{1}-1}}{2}k< \mathcal{L}_{k}+1$ et $H-\frac{1}{2}<-\frac{\tan(\frac{1}{2}\sqrt{c_{1}-1}k\pi)}{\sqrt{c_{1}-1}k\pi}.$
%Ce dernier cas ressemble alors au premier, moyennant le d\'ecalage d'une unit\'e.
\end{enumerate}
\begin{maremarque}
\label{remarque:5} On voit ainsi que les entiers $k$ et $\mathcal{L}_{k}$ sont comparables ($k \approxeq \mathcal{L}_{k})$. De plus, pour le Cas 1 le nombre r\'eel $\frac{\sqrt{c_{1}-1}}{2}k$ appartient \`{a}  la premi\`{e}re moiti\'e d'un intervalle entre deux entiers, i.e.
 $\frac{\sqrt{c_{1}-1}}{2}k\in \lbrack n, n+\frac{1}{2}\rbrack$ (intervalle ferm\'e) alors que dans les deux autres cas il est dans la seconde moiti\'e, i.e. $\frac{\sqrt{c_{1}-1}}{2}k\in (n+\frac{1}{2}, n)$ (intervalle ouvert), les positions relatives de $H-\frac{1}{2}$ et $\frac{\sqrt{c_{1}-1}}{2}k$ distinguant ces deux cas entre eux.
 \end{maremarque}
%%%%%%%%%%%%%%%%%%%%%%%%%%%%%%%%%%%%%%%%%%%%%%%%%%%%
\begin{monlem}
\label{lemme-comportement-asymptotique-fonctionpropreguidee:3}
On consid\`{e}re la suite des valeurs propres $(\lambda_{k,\mathcal{L}_{k}})_{k\geq1}.$ Chacun des entiers $k$ et $\mathcal{L}_{k}$ est comparable \`{a} $\lambda_{k,\mathcal{L}_{k}}^{\frac{1}{2}}$. De plus,
\begin{enumerate}
\item {\bf Cas} 1 et 3 : les quantit\'es $k,\mathcal{L}_{k},\lambda^{\frac{1}{2}}_{k,\mathcal{L}_{k}}$ et $(\xi'_{1})^{2}$ sont comparables entre elles;
\item {\bf Cas} 2 : les quantit\'es $k,\mathcal{L}_{k},\lambda^{\frac{1}{2}}_{k,\mathcal{L}_{k}}$ sont comparables entre elles et $\frac{(\xi'_{1})^{2}}{\lambda^{\frac{1}{2}}_{k,\mathcal{L}_{k}}} = O(1).$
\end{enumerate}
\end{monlem}
\begin{proof} L'encadrement le plus large est $\lambda_{\mathcal{L}_{k}}< \lambda_{k,\mathcal{L}_{k}}<\lambda_{\mathcal{L}_{k}+1}$ et comme $k$ et $\mathcal{L}_{k}$ sont comparables on en d\'eduit la premi\`{e}re phrase de l'\'enonc\'e. Nous \'etudions les trois cas s\'epar\'ement.\\
{\bf\large Cas 1} $\lambda^{\mathcal{L}_{k}}\leq c_{1}k^{2}\pi^{2}\leq \lambda_{\mathcal{L}_{k}+1}.$\\
Dans cette situation, $\mathcal{L}_{k}$ est le plus grand entier tel que $\lambda^{\mathcal{L}_{k}}\leq c_{1}k^{2}\pi^{2}$, i.e. $k^{2}\pi^{2}+ 4\mathcal{L}_{k}^{2}\pi^{2}\leq c_{1}k^{2}\pi^{2}$ d'o\`{u}  $\mathcal{L}_{k}=E\left(k\frac{\sqrt{c_{1}-1}}{2}\right)$ ($E(x)$ d\'esigne la partie enti\`{e}re du nombre r\'eel $x$), i.e. $k\frac{\sqrt{c_{1}-1}}{2} =\mathcal{L}_{k} + \alpha$ avec $0\leq \alpha <1.$
%Par suite, $\lambda_{k,\mathcal{L}_{k}}$ est ainsi comparable \`{a} $\mathcal{L}_{k}^{2}$ puisque $\lambda_{\mathcal{L}_{k}}<\lambda_{k,\mathcal{L}_{k}}<\lambda^{\mathcal{L}_{k}}.$ 
Pour pr\'eciser $\lambda_{k,\mathcal{L}_{k}}$ nous proc\'edons en deux \'etapes :
\begin{enumerate}
\item[(a)] Commen\c{c}ons par \'evaluer la valeur $\Lambda_{\mathcal{L}_{k}}$ de $\lambda\mapsto\frac{1}{\xi'_{1}}\tanh(\xi'_{1}(H-\frac{1}{2}))$ en $\lambda = \lambda_{\mathcal{L}_{k}}.$ En ce point on a $(\xi'_{1})^{2} = \frac{\pi^{2}}{c_{1}}((c_{1}-1)k^{2} -(2\mathcal{L}_{k}-1)^{2})$. Comme $(c_{1}-1)k^{2} = 4(\mathcal{L}_{k}+\alpha)^{2}$, on obtient $\xi'_{1} = 2\pi\sqrt{\frac{\mathcal{L}_{k}}{c_{1}}(1+2\alpha + \frac{(2\alpha -1)(1+2\alpha)}{4L})}$, d'o\`{u}
\begin{equation}
\label{xi-1:2}
\Lambda_{\mathcal{L}_{k}} = \frac{\tanh\left( 2\pi\sqrt{\frac{\mathcal{L}_{k}}{c_{1}}(1+2\alpha + \frac{(2\alpha -1)(1+2\alpha)}{4\mathcal{L}_{k}})}(H-\frac{1}{2})\right)}{2\pi\sqrt{\frac{\mathcal{L}_{k}}{c_{1}}\left(1+2\alpha + \frac{(2\alpha -1)(1+2\alpha)}{4\mathcal{L}_{k}}\right)}}.
\end{equation}
Pour $\mathcal{L}_{k}$ grand, donc $k$ grand, la majoration $\Lambda_{\mathcal{L}_{k}}>\frac{1}{4\pi}\sqrt{\frac{c_{1}}{L}}$ est vraie.
\item[(b)] Nous \'evaluons la valeur $\bar{\Lambda}$ de la fonction $\lambda \mapsto -\frac{1}{\xi_{0}}\tan(\frac{\xi_{0}}{2})$ en $\Lambda^{\mathcal{L}_{k}} = \frac{\lambda_{\mathcal{L}_{k}} + \lambda^{\mathcal{L}_{k}}}{2},$ milieu des abscisses $\lambda_{\mathcal{L}_{k}}$ et $\Lambda^{\mathcal{L}_{k}}.$ On a $\Lambda^{\mathcal{L}_{k}} = k^{2}\pi^{2} + (4\mathcal{L}_{k}^{2} -\frac{4\mathcal{L}_{k}-1}{2})\pi^{2}$ et la valeur de $\xi_{0}$ en ce point est \'egale \`{a} $2\pi \mathcal{L}_{k}(1-\frac{4\mathcal{L}_{k}-1}{8\mathcal{L}_{k}^{2}})^{1/2}.$ Quand $\mathcal{L}_{k}\to\infty, \frac{\xi_{0}}{2}\sim \pi \mathcal{L}_{k}- \frac{\pi}{4}$ et
\begin{equation}
\label{xi-1:3}
\bar{\Lambda}_{\mathcal{L}_{k}} =-\frac{\tan\left(\frac{ \xi_{0}}{2}\right)}{\xi_{0}}\sim \frac{1}{2\pi \mathcal{L}_{k}} < \Lambda_{\mathcal{L}_{k}}.
\end{equation}
\end{enumerate}
Cela signifie que la valeur propre $\lambda_{k,\mathcal{L}_{k}}$ satisfait 
$\lambda_{\mathcal{L}_{k}} < \lambda_{k,\mathcal{L}_{k}}< \Lambda^{\mathcal{L}_{k}}$ pour $k$ grand, ce qui implique que
\begin{equation}
\label{xi-1:4}
c_{1}(\xi'_{1}(k,\mathcal{L}_{k}))^{2}> c_{1}k^{2}\pi^{2}- \frac{\lambda_{\mathcal{L}_{k}} +\lambda^{\mathcal{L}_{k}}}{2} = c_{1}k^{2}\pi^{2} -\lambda^{L} + \frac{\lambda^{\mathcal{L}_{k}} -\lambda_{\mathcal{L}_{k}}}{2}> \frac{\lambda^{\mathcal{L}_{k}} -\lambda_{\mathcal{L}_{k}}}{2} = \frac{4\mathcal{L}_{k}-1}{2}\pi^{2}\to\infty.
\end{equation}
Comme $\xi'_{1} = 2\pi\sqrt{\frac{\mathcal{L}_{k}}{c_{1}}(1+2\alpha + \frac{(2\alpha -1)(1+2\alpha)}{4\mathcal{L}_{k}})},$ valeur de $\xi'_{1}$ en $\lambda_{\mathcal{L}_{k}},$ est sup\'erieur \`{a} la valeur de $\xi'_{1}$ en $\lambda_{k,\mathcal{L}_{k}}$,  on d\'eduit de \eqref{xi-1:4} que $\xi'_{1}(k,\mathcal{L}_{k})$ est comparable \`{a} $\sqrt{\mathcal{L}_{k}}$ d'o\`{u} \`{a} $\lambda_{k,\mathcal{L}_{k}}^{\frac{1}{4}}$.\\
%%%%%%%%%%%%%%%%%%%%%%%%%%%%%%%%%%%%%%%%%%%%%%%%%%%%
{\bf\large Cas 3} $\lambda_{L_{k} +1}<c_{1}k^{2}\pi^{2}<\lambda^{L_{k} +1},$ sans intersection\\
Pour ce cas (Figure 4, \`{a} droite) il vient 
\begin{equation}
\label{xi-1:5}
(4\mathcal{L}_{k}+1)\pi^{2}=\lambda_{\mathcal{L}_{k}+1}-\lambda^{\mathcal{L}_{k}} <c_{1}(\xi'_{1}(k,\mathcal{L}_{k}))^{2}= c_{1}k^{2}\pi^{2}-\lambda_{k,\mathcal{L}_{k}}< \lambda^{\mathcal{L}_{k}+1} -\lambda_{\mathcal{L}_{k}} =3(4\mathcal{L}_{k}+1)\pi^{2}.
\end{equation}
ce qui montre que $\xi'_{1}(k,\mathcal{L}_{k})$ est comparable \`{a} $\sqrt{\mathcal{L}_{k}}$ d'o\`{u} \`{a} $\lambda_{k,\mathcal{L}_{k}}^{\frac{1}{4}}$.\\
%%%%%%%%%%%%%%%%%%%%%%%%%%%%%%%%%%%%%%%%%%%%%%%%%%%%
{\bf\large Cas 2} $\lambda_{\mathcal{L}_{k}}<c_{1}k^{2}\pi^{2}<\lambda^{\mathcal{L}_{k}},$ avec intersection\\
On peut d\'ej\`{a} majorer $\xi'_{1}(k,\mathcal{L}_{k})$. En effet, de $c(\xi'_{1})^{2} = c_{1}k^{2}\pi^{2} -\lambda_{k,\mathcal{L}_{k}}<c_{1}k^{2}\pi^{2}-\lambda_{\mathcal{L}_{k}}= c_{1}k^{2}\pi^{2}-\lambda^{\mathcal{L}_{k}} + \lambda^{\mathcal{L}_{k}} -\lambda_{\mathcal{L}_{k}} <  \lambda^{\mathcal{L}_{k}} -\lambda_{\mathcal{L}_{k}} = (4\mathcal{L}_{k}-1)\pi^{2},$ on d\'eduit que $\xi'_{1}\leq \sqrt{\frac{4\mathcal{L}_{k}-1}{c_{1}}}\pi$. Cette derni\`{e}re quantit\'e est de l'ordre de $\lambda^{\frac{1}{4}}_{k,\mathcal{L}_{k}}$ d'o\`{u} l'existence d'une constante $C_{1}>0$ telle $ 0< \xi'_{1}(k,\mathcal{L}_{k})\leq C_{1}\lambda^{\frac{1}{4}}_{k,\mathcal{L}_{k}}.$
\end{proof}
Il faudrait am\'eliorer la minoration de $\xi'_{1}$ dans le Cas 2. Pour l'instant et pour ce cas, le taux de d\'ecroissance dans l'exponentielle de \eqref{equation-conc-conf-fonctionpropreguidee-1saut:3} est de l'ordre de $\lambda_{k,\mathcal{L}_{k}-1}^{\frac{1}{4}}$ pour une suite $(k_{n})$ convenable. Les trois situations existent vraiment comme le montre le
\begin{monlem}
\label{lemme-comportement-asymptotique-fonctionpropreguidee:4}
 Si $\frac{\sqrt{c_{1}-1}}{2} = \frac{p}{q}\in \mathbb{Q}, p,q>0,$ les cas 1 et 2 arrivent
\begin{enumerate}
\item[a-]
\label{item-cas1:1} Si $\frac{p}{q}$ est un entier alors on est toujours dans le Cas 1.
\item[b-] Si $ \frac{p}{q}\in \mathbb{Q}\setminus\mathbb{N}$, la suite $(k_{n})_{n}, k_{n} = nq,$ satisfait le Cas 1.
\item[c-] Si $\frac{p}{q}\leq\frac{1}{2},$ la suite $(k_{n})_{n}$ avec $k_{n} = nq +1$ satisfait le Cas 1. De plus, si $\frac{p}{q}$ est assez proche de 0 pour qu'il existe un entier $m$ tel que $\frac{1}{2}<m\frac{p}{q}<1$, la suite $(k_{n})_{n}$ avec $k_{n} = nq +m$ satisfait le Cas 2.
\item[d-] Si $\frac{1}{2}<\frac{p}{q}<1$ la suite $(k_{n})_{n}$ avec $ k_{n} = nq +1$ satisfait le Cas 2.
%si $\frac{p}{q}<1$ est proche de 1.
%\item[e-] Si $\frac{1}{2}<\frac{p}{q}<1$ la suite $(k_{n})_{n}, k_{n} = 2nq +1$ satisfait le Cas 3 si $\frac{p}{q}<1$ est proche de $\frac{1}{2}.$
\end{enumerate}
\end{monlem}
\begin{proof} La Remarque \ref{remarque:5} rend \'evidentes les deux premi\`{e}res affirmations. Pour la derni\`{e}re, il faut en plus remarquer que $-\frac{\tan(\frac{1}{2}\sqrt{c_{1}-1}k\pi)}{\sqrt{c_{1}-1}k\pi}= -\frac{q\tan(\frac{p}{q}\pi)}{2pk\pi}\downarrow 0$ quand $n\to\infty$ et donc deviendra inf\'erieure \`{a} $H-\frac{1}{2}.$ On conclue de m\^{e}me pour la troisi\`{e}me.
%Pour les deux derni\`{e}res, prendre la sous-suite $k_{n} = 2nq+1.$
\end{proof}
Prouvons que si $0< \alpha = \frac{\sqrt{c_{1}-1}}{2}\in \mathbb{R}\setminus \mathbb{Q},$ il existe une sous-suite d'entiers $(k_{n})_{n}$ pour lesquels on est dans les Cas 2 ou  3. On veut donc montrer que pour tout $n$ il existe un entier $k_{n}>n$ et un entier $L_{n}$ tel que $L_{n}+\frac{1}{2}<k_{n} \alpha<L_{n}+1.$ Proc\'edant par contradiction, nous devons montrer que la proposition
\begin{equation}
\label{doubleinegaliteanier:2}
\mbox{ Il existe } n\mbox{ tel que, pour tout } k>n, \mbox{ on n'a pas d'entier } L \mbox{ v\'erifiant } L\leq k\alpha\leq L+\frac{1}{2},
\end{equation}
est fausse. Nous la supposons exacte pour $0<\alpha<\frac{1}{2}$, et prenons $k>n.$ Il existe alors un entier $p\geq 1$ pour lequel $L + \frac{1}{2}<(k+p)\alpha<L+1$, ce qui contredit \eqref{doubleinegaliteanier:2}. Supposons maintenant exact \eqref{doubleinegaliteanier:2}
 pour un $\alpha = \frac{1}{2}+\beta, 0<\beta<\frac{1}{2}$ et prenons
 $k$ de la forme $k= 2 p, p\in \mathbb{N},$ d'o\`{u}, pour un certain $L,$ l'in\'egalit\'e $L\leq p + 2p\beta\leq L+\frac{1}{2}$ qui est impossible si $p\geq \frac{1}{2\beta}.$ La g\'en\'eralisation aux autres irrationnels est ais\'ee.
 %%%%%%%%%%%%%%%%%%%%%%%%%%%%%%%%%%%%%%%%%%%%%%%%%%%%
%%%%%%%%%%%%%%%%%%%%%%%%%%%%%%%%%%%%%%%%%%%%%%%%%%%%
%{\color{red}
\section{D\'etail des calculs pour la zone (I).}
\setcounter{equation}{0}
\label{annexe-detailcalculs-zone(1):1}
\setcounter{equation}{0}
Nous consid\'erons une fonction propre associ\'ee \`{a} une valeur propre $\lambda$ de la zone (I) : $c_{1}\frac{k^{2}\pi^{2}}{L^{2}}<\lambda<c_{2}\frac{k^{2}\pi^{2}}{L^{2}}.$ Les conditions de transmission aux interfaces $S_{0}, S_{1}$ s'\'ecrivent
\begin{eqnarray*}
\label{equation-annexe-detailcalculs-zone(1):1}
a_{1}\sin(\xi_{1}h_{0}) + b_{1}\cos(\xi_{1}h_{0}) & = & a_{0}\sin(\xi_{0}h_{0})\nonumber\\
c_{1}a_{1}\xi_{1}\cos(\xi_{1}h_{0}) - c_{1}\xi_{1}b_{1}\sin(\xi_{1}h_{0}) & = & c_{0}a_{0}\xi_{0}\cos(\xi_{0}h_{0})\nonumber\\
a_{1}\sin(\xi_{1}h_{1}) + b_{1}\cos(\xi_{1}h_{1}) & = & a_{2}\sinh(\xi'_{2}(H-h_{1}))\nonumber\\
c_{1}a_{1}\xi_{1}\cos(\xi_{1}h_{1}) - c_{1}\xi_{1}b_{1}\sin(\xi_{1}h_{1}) & = &- c_{2}a_{2}\xi'_{2}\cosh(\xi'_{2}(H-h_{1})),
\end{eqnarray*}
et la r\'esolution donne pour $(a_{1},b_{1})$
\begin{eqnarray}
a_{1} & = & a_{0}\frac{c_{1}\xi_{1}\sin(\xi_{0}h_{0})\sin(\xi_{1}h_{0}) + c_{0}\xi_{0}\cos(\xi_{0}h_{0})\cos(\xi_{1}h_{0}) }{c_{1}\xi_{1}},\nonumber\\
b_{1} & = & - a_{0}\frac{c_{0}\xi_{0}\cos(\xi_{0}h_{0})\sin(\xi_{1}h_{0}) - c_{1}\xi_{1}\sin(\xi_{0}h_{0})\cos(\xi_{1}h_{0}) }{c_{1}\xi_{1}},\nonumber\\
\label{equation-annexe-detailcalculs-zone(1):1'}
a_{1}^{2} + b_{1}^{2} & = & a_{0}^{2}\left( \sin^{2}(\xi_{0}h_{0}) + (\frac{c_{0}\xi_{0}}{c_{1}\xi_{1}})^{2}\cos^{2}(\xi_{0}h_{0})\right),
\end{eqnarray}
tandis que pour $a_{2}$
\begin{eqnarray*}
a_{1} & = & - a_{2}\frac{- c_{1}\xi_{1}\sinh(\xi'_{2}(H-h_{1}))\sin(\xi_{1}h_{1}) + c_{2}\xi'_{2}\cosh(\xi'_{2}(H-h_{1}))\cos(\xi_{1}h_{1}) }{c_{1}\xi_{1}},\nonumber\\
b_{1} & = & a_{2}\frac{c_{2}\xi'_{2}\sin(\xi_{1}h_{1})\cosh(\xi'_{2}(H-h_{1})) + c_{1}\xi_{1}\sinh(\xi'_{2}(H-h_{1}))\cos(\xi_{1}h_{1}) }{c_{1}\xi_{1}},\nonumber\\
a_{1}^{2} + b_{1}^{2} & = & a_{2}^{2}\left( \sinh^{2}(\xi'_{2}(H-h_{1})) + (\frac{c_{2}\xi'_{2}}{c_{1}\xi_{1}})^{2}\cosh^{2}(\xi'_{2}(H-h_{1}))\right),\nonumber\\
\label{equation-annexe-detailcalculs-zone(1):2}
a_{2}^{2} & = & a_{0}^{2}\frac{ \sin^{2}(\xi_{0}h_{0}) + (\frac{c_{0}\xi_{0}}{c_{1}\xi_{1}})^{2}\cos^{2}(\xi_{0}h_{0})}{\sinh^{2}(\xi'_{2}(H-h_{1})) + (\frac{c_{2}\xi'_{2}}{c_{1}\xi_{1}})^{2}\cosh^{2}(\xi'_{2}(H-h_{1}))}.
\end{eqnarray*}
{\bf Cas particulier 1 : $(c_{1} + \varepsilon)\frac{k^{2}\pi^{2}}{L^{2}}<\lambda<(c_{2}-\varepsilon)\frac{k^{2}\pi^{2}}{L^{2}}$, pour un $\varepsilon >0.$}\\
On ne consid\`{e}re ainsi que les valeurs propres $\lambda_{k,\ell}$ qui ne s'approchent pas trop pr\`{e}s des bords de la zone (I), au sens qui vient d'\^{e}tre pr\'ecis\'e, ce qui implique d'ailleurs $\xi'_{2}\to\infty.$ Avec ce cadre, on a 
\begin{eqnarray*}
\label{equation-annexe-detailcalculs-zone(1):3}
\frac{c_{2}\varepsilon}{c_{1}(c_{2}-c_{1}-\varepsilon)} &\leq \left( \frac{c_{2}\xi'_{2}}{c_{1}\xi_{1}}\right)^{2} \leq \frac{c_{2}(c_{2}-c_{1}-\varepsilon)}{c_{1}\varepsilon}\mbox{ et } 
\frac{c_{0}(c_{2}-c_{0}-\varepsilon)}{c_{1}(c_{2}-c_{1}-\varepsilon)} &\leq \left( \frac{c_{0}\xi_{0}}{c_{1}\xi_{1}}\right)^{2} \leq \frac{c_{0}(c_{1}-c_{0}+\varepsilon)}{c_{1}\varepsilon}.
\end{eqnarray*}
Ainsi, pour toute suite infinie de valeurs propres distinctes, les quantit\'es $a_{1}^{2} + b_{1}^{2}$ et $a_{0}^{2}$ sont comparables d'apr\`{e}s \eqref{equation-annexe-detailcalculs-zone(1):1'}. De plus, il existe deux constantes $M_{1},M_{2}>0$, d\'ependant de $\varepsilon,$ telles que 
\begin{equation*}
\label{equation-annexe-detailcalculs-zone(1):4}
M_{1} e^{-2\xi'_{2}(H-h_{1})}\leq( \frac{a_{2}}{a_{0}})^{2}\leq M_{2}e^{-2\xi'_{2}(H-h_{1})},
\end{equation*}
i.e. $(a_{2}/a_{0})^{2}\approxeq  e^{-2\xi'_{2}(H-h_{1})}$ avec la notation introduite \`{a} la fin de la section \ref{section-introduction}, d'o\`{u} \eqref{equation-2sauts-zone(I):2} et \eqref{equation-2sauts-zone(I):3}.\\
Nous consid\'erons maintenant deux autres cas particuliers dont on ne peut pas affirmer pour l'instant qu'ils existent. Pour les \'etudier, il est utile d'avoir en m\'emoire les relations suivantes
\begin{eqnarray}
\label{equation-annexe-detailcalculs-zone(1):5}
\!\!\!\!\!c_{2}(\xi'_{2})^{2} + c_{1}\xi_{1}^{2}  =  (c_{2}-c_{1})\frac{k^{2}\pi^{2}}{L^{2}},\,
c_{2}(\xi'_{2})^{2} + c_{0}\xi_{0}^{2}  =  (c_{2}-c_{0})\frac{k^{2}\pi^{2}}{L^{2}},\,
c_{0}\xi_{0}^{2} - c_{1}\xi_{1}^{2}  =  (c_{1}-c_{0})\frac{k^{2}\pi^{2}}{L^{2}},
\end{eqnarray}
et de poser ${\rm Num} =  \sin^{2}(\xi_{0}h_{0}) + (\frac{c_{0}\xi_{0}}{c_{1}\xi_{1}})^{2}\cos^{2}(\xi_{0}h_{0})$ et ${\rm Den}= \sinh^{2}(\xi'_{2}(H-h_{1})) + (\frac{c_{2}\xi'_{2}}{c_{1}\xi_{1}})^{2}\cosh^{2}(\xi'_{2}(H-h_{1})).$\\ 
{\bf Cas particulier 2 : $\xi'_{2}\to 0$.}\\
Alors, \eqref{equation-annexe-detailcalculs-zone(1):5} implique $\xi_{1}\to \infty$ et $\left( \frac{c_{0}\xi_{0}}{c_{1}\xi_{1}}\right)^{2}\sim \frac{c_{0}(c_{2}-c_{0})}{c_{1}(c_{2}-c_{1})}$ d'o\`{u} ${\rm Num}\approxeq 1$
%Il existe deux constantes $M_{1}, M_{2}>0$ telles que $M_{1}<N<M_{2}$ 
et ${\rm Den}= \sinh^{2}(\xi'_{2}(H-h_{1}))\left( 1 + \big(\frac{c_{2}\xi'_{2}}{c_{1}\xi_{1}}\big)^{2} \,\frac{\cosh^{2}(\xi'_{2}(H-h_{1}))}{\sinh^{2}(\xi'_{2}(H-h_{1}))}\right)\sim \sinh^{2}(\xi'_{2}(H-h_{1})).$ Par suite, on a $(\frac{a_{2}}{a_{0}})^{2}\approxeq (\xi'_{2}(H-h_{1}))^{-2}$ et $a_{0}^{2}\approxeq a_{1}^{2} + b_{1}^{2}$ d'o\`{u} $\int_{\Omega_{2}}u^{2}(x){\rm d}x\approxeq a_{0}^{2}(H-h_{1}), \int_{\Omega_{0}}u^{2}(x){\rm d}x\approxeq \frac{a_{0}^{2}}{2}h_{0}$ et   $\int_{\Omega_{1}}u^{2}(x){\rm d}x\approxeq \frac{a_{1}^{2}+b_{1}^{2}}{2}h_{0}.$ Sachant que $\frac{c_{0}\xi_{0}}{c_{1}\xi_{1}}\approxeq 1$, \eqref{equation-annexe-detailcalculs-zone(1):1'} implique $a_{1}^{2}+b_{1}^{2}\approxeq a_{0},$  ce qui prouve \eqref{equation-2sauts-zone(I):4}.
%$M_{1} a_{0}^{2} \leq a_{1}^{2} + b_{1}^{2} \leq M_{2} a_{0}^{2} .$
\\
{\bf Cas particulier 3 : $\xi_{1}\to 0.$}\\
\eqref{equation-annexe-detailcalculs-zone(1):5} implique d'une part $\xi'_{2}\to \infty$ et $\left( \frac{c_{2}\xi'_{2}}{c_{1}\xi_{1}}\right)^{2} \to \infty$ et, d'autre part, $ \xi_{0}\to\infty$ et $\left(\frac{c_{0}\xi_{0}}{c_{2}\xi'_{2}}\right)^{2} \sim \frac{c_{0}(c_{1}-c_{0})}{c_{1}(c_{2}-c_{1})}.$ Ainsi ${\rm Den}\sim \left( \frac{c_{2}\xi'_{2}}{c_{1}\xi_{1}}\right)^{2}\cosh^{2}(\xi'_{2}(H-h_{1}))$ ce qui donne
\begin{equation*}
\label{equation-annexe-detailcalculs-zone(1):6}
a_{2}^{2}\sim \frac{a_{0}^{2}}{\cosh^{2}(\xi'_{2}(H-h_{1}))}\left( \big(\frac{c_{1}\xi_{1}}{c_{2}\xi'_{2}}\big)^{2}\sin^{2}(\xi_{0}h_{0}) + \big(\frac{c_{0}\xi_{0}}{c_{2}\xi'_{2}}\big)^{2}\cos^{2}(\xi_{0}h_{0})\right)
\end{equation*}
d'o\`{u} $a_{2}^{2}\leq a_{0}^{2}\frac{M}{\cosh^{2}(\xi'_{2}(H-h_{1}))}$ pour une constante $M.$ 
\begin{enumerate}
\item Si $\cos(\xi_{0}h_{0})$ ne tend pas vers 0 alors $\tan(\xi_{0}h_{0})$ est born\'ee. Ainsi, le c\^{o}t\'e gauche de \eqref{suite(stratification3valeurs)equation:3} tendrait vers 0 tandis, que du c\^{o}t\'e droit, le d\'enominateur tendrait vers -1 et  le num\'erateur vers $(h_{1}-h_{0})/c_{1}$, d'o\`{u} une contradiction montrant que cette situation ne peut arriver.
%$a_{2}^{2}\approxeq \frac{a_{0}^{2}}{\cosh^{2}(\xi'_{2}(H-h_{1}))}$ mais $a_{1}^{2} + b_{1}^{2}\to \infty.$
\item Si $\cos(\xi_{0}h_{0})\to 0$ et $\vert \frac{\tan(\xi_{0}h_{0})}{c_{0}\xi_{0}}\vert \to\infty,$ on utilise encore \eqref{suite(stratification3valeurs)equation:3} : le membre de gauche tend vers 0 mais le c\^{o}t\'e droit \'equivaut \`{a} $\frac{1}{c_{1}\xi_{1}^{2}(h_{1}-h_{0})-\frac{c_{0}\xi_{0}}{\tan(\xi_{0}h_{0})}}\to \infty$ d'o\`{u} encore une contradiction.
\item Si $\cos(\xi_{0}h_{0})\to 0$ et $\frac{\tan(\xi_{0}h_{0})}{c_{0}\xi_{0}}=0(1)$ on a $\xi_{0} \cos(\xi_{0}h_{0})\nrightarrow 0$ et le d\'enominateur du c\^{o}t\'e droit de \eqref{suite(stratification3valeurs)equation:3} \'equivaut \`{a} -1.  Il faut donc que $\frac{\tan(\xi_{0}h_{0})}{c_{0}\xi_{0}} \sim -\frac{h_{1}-h_{0}}{c_{1}}.$ Cette \'eventualit\'e implique  $a_{1}^{2} + b_{1}^{2}\sim a_{0}^{2}\big(\frac{c_{0}\xi_{0}}{c_{1}\xi_{1}}\big)^{2}\cos^{2}(\xi_{0}h_{0})\to \infty$ d'o\`{u} concentration de la masse dans la zone interm\'ediaire $\Omega_{1}.$
\end{enumerate}
%%%%%%%%%%%%%%%%%%%%%%%%%%%%%%%%%%%%%%%%%%%%%%%%%%%%
%%%%%%%%%%%%%%%%%%%%%%%%%%%%%%%%%%%%%%%%%%%%%%%%%%%%
%%%%%%%%%%%%%%%%%%%%%%%%%%%%%%%%
%%%%%%%%%%%%%%%%%%%%%%%%%%%%%%%%%%%%%%%%%%%%%%%%%%%%
%\bibliography{Concentration-v1.bib}
%\bibliographystyle{amsplain}

\end{document}